\pdfoutput=1

\documentclass[11pt,letterpaper]{article}
\usepackage{fullpage}

\usepackage[cp1250]{inputenc}
\usepackage[T1]{fontenc}
\usepackage{calligra}
\usepackage{amsfonts}
\usepackage{amsmath}
\usepackage{amsthm}
\usepackage{amssymb}

\usepackage[dvips]{graphicx}
\usepackage{url}
\usepackage{color}
\usepackage{graphicx}
\usepackage{algorithmic}
\usepackage{algorithm}
\usepackage{verbatim}
\usepackage{subfigure}

\usepackage{hyperref} 

\newcommand{\del}[1]{}

\newcommand{\ve}[2]{\langle #1 ,  #2 \rangle}

\newcommand{\eqdef}{\stackrel{\text{def}}{=}}

\newcommand{\cPsi}{\Omega}

\newcommand{\Srv}{\hat{S}}   

\newcommand{\JJ}[0]{\mathcal J}

\newcommand{\vsubset}[2]{#1_{[#2]}}

\newcommand{\R}{\mathbf{R}}
\newcommand{\Prob}{\mathbf{P}}
\newcommand{\E}{\mathbf{E}}

\newcommand{\vc}[2]{#1^{(#2)}}

\newcommand{\ncs}[2]{\|#1\|^2_{(#2)}}

\newcommand{\Rw}[2]{\mathcal R_{#1}(#2)}
\newcommand{\Rws}[2]{\mathcal R^2_{#1}(#2)}

\newcommand{\nbp}[2]{\|#1\|_{(#2)}}   
\newcommand{\nbd}[2]{\|#1\|_{(#2)}^*} 

\newcommand{\U}{U}
\newcommand{\N}{N}

\DeclareMathOperator{\support}{Supp}       

\newcommand{\Lip}{L}


\DeclareMathOperator{\Exp}{\mathbf{E}}           

\DeclareMathOperator{\dom}{dom}         



\DeclareMathOperator{\Diag}{Diag}       
\DeclareMathOperator{\diag}{diag}       



\theoremstyle{plain}
\newtheorem{theorem}{Theorem}
\newtheorem{proposition}[theorem]{Proposition}

\newtheorem{lemma}[theorem]{Lemma}

\theoremstyle{definition}

\newtheorem{definition}[theorem]{Definition}

\newcommand{\NN}{{[n]}}
\newcommand{\pp}{\Prob}
\newcommand{\ABC}{\phantom{$\frac{\frac{A}{B}}{\frac{A}{B}}$}}

\title{Parallel Coordinate Descent Methods for Big Data Optimization \footnote{
This paper was awarded the \textbf{16th IMA Leslie Fox Prize in Numerical Analysis} (2nd Prize; for M.T.) in June 2013. The work of the first author was supported by EPSRC grants EP/J020567/1 (Algorithms for Data Simplicity) and EP/I017127/1 (Mathematics for Vast Digital Resources). The second author was supported by the Centre for Numerical Algorithms and Intelligent Software (funded by EPSRC grant EP/G036136/1 and the Scottish Funding Council). An open source code with an efficient implementation of the algorithm(s) developed in this paper is published here: \href{http://code.google.com/p/ac-dc/}{http://code.google.com/p/ac-dc/.} }}

\author{Peter Richt\'arik \qquad \qquad   Martin Tak\'a\v{c}\\\\\emph{School of Mathematics}\\\emph{University of Edinburgh}\\\emph{United Kingdom}}

\date{November 23, 2012\\ (revised November 23, 2013)}

\begin{document}

\maketitle

\begin{abstract}

In this work we show that randomized (block) coordinate descent methods can be accelerated by parallelization when applied to the problem of minimizing the sum of a \emph{partially separable} smooth convex function and a simple separable convex function. The theoretical speedup, as compared to the serial method, and referring to the number of iterations needed to approximately solve the problem with high probability, is a simple expression depending on the number of parallel processors and a natural and easily computable measure of separability of the smooth component of the objective function. In the worst case, when no degree of separability is present, there may be no speedup; in the best case, when the problem is separable, the speedup is equal to the number of processors.  Our analysis also works in the mode when the number of blocks being updated at each iteration is random, which allows for modeling situations with busy or unreliable processors. We show that our algorithm is able to solve a LASSO problem involving a matrix with 20 billion nonzeros in 2 hours on a large memory node with 24 cores.

\paragraph{Keywords:}  Parallel coordinate descent, big data optimization, partial separability, huge-scale optimization, iteration complexity, expected separable over-approximation, composite objective, convex optimization, LASSO.
\end{abstract}

\section{Introduction} \label{sec:intro}

\paragraph{Big data optimization.}
Recently there has been a surge in  interest in the design of algorithms suitable for  solving convex optimization problems with a huge number of variables \cite{RT:UCDC, Nesterov-Subgrad-Huge}. Indeed, the size of problems arising in fields such as machine learning \cite{Bradley:PCD-paper}, network analysis \cite{zargham2011accelerated}, PDEs \cite{Xu:space_decomposition}, truss topology design \cite{RT:TTD2011} and compressed sensing \cite{Li:CDOMACSGA} usually grows with our capacity to solve them, and is projected to grow dramatically in the next decade. In fact, much of computational science is currently facing the ``big data'' challenge, and this work is aimed at developing optimization algorithms suitable for the task.

\paragraph{Coordinate descent methods.}
Coordinate descent  methods  (CDM) are one of the most successful classes of algorithms in the big data optimization domain. Broadly speaking, CDMs are based on the strategy of updating a single coordinate (or a single block of coordinates) of the vector of variables at each iteration. This  often drastically reduces memory requirements as well as the arithmetic complexity of a single iteration, making the methods easily implementable and scalable. In certain applications, a single iteration can amount to as few as 4 multiplications and additions  only \cite{RT:TTD2011}! On the other hand, many more iterations are necessary for convergence than it is usual for classical gradient methods. Indeed, the number of iterations a CDM requires to solve a smooth convex optimization problem is $O(\tfrac{n \tilde{L} R^2}{\epsilon})$, where $\epsilon$ is the error tolerance, $n$ is the number variables (or blocks of variables), $\tilde{L}$ is the average of the Lipschitz constants of the gradient of the objective function associated with the variables (blocks of variables) and $R$ is the  distance from the starting iterate to the set of optimal solutions. On balance, as observed by numerous authors, serial CDMs are  much more efficient for big data optimization problems than most other competing approaches, such as gradient methods \cite{Nesterov:2010RCDM, RT:TTD2011}.

\paragraph{Parallelization.}
We wish to point out that for truly huge-scale problems it is absolutely necessary to \emph{parallelize}. This is  in line with the rise and ever  increasing availability of high performance computing systems built around multi-core processors, GPU-accelerators and computer clusters, the success of which is rooted in massive parallelization. This simple observation, combined with the remarkable scalability of serial CDMs, leads to our belief that the study of parallel coordinate descent methods (PCDMs) is a very timely topic.

\paragraph{Research Idea.}
The work presented in this paper was motivated by the desire to answer the following question:

\begin{quote}
\emph{Under what natural and easily verifiable structural assumptions on the objective function does parallelization of a coordinate descent method lead to acceleration?}
\end{quote}

Our starting point was the following simple observation. Assume that we wish to minimize a separable function $F$ of $n$ variables (i.e., a function that can be written as a sum of $n$ functions each of which depends on a single variable only). For simplicity, in this thought experiment, assume that there are no constraints. Clearly, the problem of minimizing $F$ can be trivially decomposed into $n$ independent univariate problems. Now, if we have $n$ processors/threads/cores, each assigned with the task of solving one of these problems, the number of parallel iterations should not depend on the dimension of the problem\footnote{For simplicity, assume the distance from the starting point to the set of optimal solutions does not depend on the dimension.}. In other words, we get an $n$-times speedup compared to the situation with a single processor only. Note that any parallel algorithm of this type can be viewed as a parallel coordinate descent method. Hence, a PCDM with $n$ processors should be $n$-times  faster than a serial one. If  $\tau$ processors are used instead, where $1\leq \tau \leq n$, one would expect a $\tau$-times speedup.

By extension, one would perhaps expect that optimization problems with objective functions which are ``close to being separable''  would also be amenable to acceleration by parallelization, where the acceleration factor $\tau$ would be reduced with the reduction of the ``degree of separability''. One of the main messages of this paper is an affirmative answer to this. Moreover, we give explicit and simple formulae for the speedup factors.

As it turns out, and as we discuss later in this section, many real-world big data optimization problems are, quite naturally, ``close to being separable''. We believe that this means that PCDMs is a very promising class of algorithms when it comes to solving structured big data optimization problems.


\paragraph{Minimizing a partially separable composite objective.} In this paper we study the problem
\begin{equation}\label{eq:P}
\text{minimize} \quad \left\{ F(x) \eqdef f(x) + \cPsi(x)\right\} \quad \text{subject to} \quad x\in \R^\N,
\end{equation}
where $f$ is a (block) \emph{partially separable} smooth convex function and $\cPsi$ is a simple (block) separable convex function.
 We allow $\cPsi$ to have values in $\R\cup\{\infty\}$, and for regularization purposes we assume $\cPsi$ is proper and closed. While \eqref{eq:P} is seemingly an unconstrained problem, $\cPsi$ can be chosen to model simple convex constraints on individual blocks of variables. Alternatively, this function can be used to enforce a certain structure (e.g., sparsity) in the solution. For a more detailed account we refer the reader to \cite{RT:UCDC}. Further, we assume that this problem has a minimum ($F^*>-\infty$). What we mean by ``smoothness'' and ``simplicity'' will be made precise in the next section.

Let us now describe the key concept of partial separability. Let $x\in \R^\N$ be decomposed into $n$ non-overlapping blocks of variables $x^{(1)},\dots,x^{(n)}$ (this will be made precise in Section~\ref{SEC:PCDM}). We assume throughout the paper that $f:\R^\N \to \R$ is  \emph{partially separable of degree $\omega$}, i.e., that it can be written in the form
\begin{equation}\label{eq:strucutre_of_f}
f(x) = \sum_{J\in \JJ} f_J(x),
\end{equation}
where $\JJ$ is a finite collection of nonempty subsets of $\NN\eqdef \{1,2,\dots,n\}$ (possibly containing identical sets multiple times),
$f_J$ are differentiable convex functions such that $f_J$ depends on blocks $x^{(i)}$ for $i\in J$ only, and
\begin{equation}\label{eq:omega} |J| \leq \omega \quad \text{for all} \quad J \in \JJ.\end{equation}
Clearly, $1\leq \omega \leq n$. The PCDM algorithms we develop and analyze in this paper only need to know $\omega$, they do not need to know the decomposition of $f$ giving rise to this $\omega$.

\paragraph{Examples of partially separable functions.} Many objective functions naturally encountered in the big data setting are partially separable. Here we give examples of three  loss/objective functions frequently used in the machine learning literature and also elsewhere. For simplicity, we assume all blocks are of size 1 (i.e., $N=n$). Let
\begin{equation}\label{eq:ML_function}f(x) =  \sum_{j=1}^m {\cal L}(x,A_j,y_j),\end{equation}
where $m$ is the number of examples, $x\in \R^n$ is the vector of features, $(A_j,y_j) \in \R^n\times \R$ are labeled examples and ${\cal L}$ is one of  the three loss functions listed in Table~\ref{tbl:ML3}. Let $A\in \R^{m \times n}$ with row $j$ equal to $A_j^T$.
\begin{table}[!ht]
\begin{center}
\begin{tabular}{|l l|}
  \hline
   & \\
  Square Loss & $\tfrac{1}{2}(A_j^T x - y_j)^2$ \\
   & \\
  Logistic Loss & $\log(1 + e^{-y_j A_j^T x})$ \\
   & \\
  Hinge Square Loss & $\tfrac{1}{2}\max\{0, 1- y_j A_j^Tx\}^2$ \\
   & \\
  \hline
\end{tabular}
\end{center}
\caption{Three examples of loss of functions}
\label{tbl:ML3}
\end{table}
Often, each example depends  on a few features only; the maximum over all features is the degree of partial separability $\omega$. More formally, note that the $j$-th function in the sum \eqref{eq:ML_function} in all cases depends on $\|A_j\|_0$ coordinates of $x$ (the number of nonzeros in the $j$-th row of $A$) and hence $f$ is partially separable of degree \[\omega = \max_j \|A_j\|_0.\] All three functions of Table~\ref{tbl:ML3} are smooth (based on the definition of smoothness in the next section). We refer the reader to \cite{hogwild} for more examples of interesting (but nonsmooth) partially separable functions arising in graph cuts and matrix completion.

\paragraph{Brief literature review.}
Several papers were written recently studying the iteration complexity of \emph{serial} CDMs of various flavours and in various settings. We will only provide  a brief summary here, for a  more detailed account we refer the reader to \cite{RT:UCDC}.

Classical CDMs update the coordinates in a cyclic order; the first attempt at analyzing the complexity of such a method is due to \cite{Saha10finite}. Stochastic/randomized CDMs, that is, methods where the coordinate to be updated is chosen randomly, were first analyzed for quadratic objectives \cite{SV:Kaczmarz2009, Leventhal:2008:RMLC}, later independently generalized to $L_1$-regularized problems \cite{ShalevTewari09} and smooth block-structured problems \cite{Nesterov:2010RCDM}, and finally unified and refined in \cite{RT:SPARS11, RT:UCDC}. The problems considered in the above papers are either unconstrained or have (block) separable constraints. Recently, randomized CDMs were developed for problems with linearly coupled constraints \cite{Necoara:Coupled, Necoara:rcdm-coupled}.

A greedy CDM for $L_1$-regularized problems was first analyzed in \cite{RT:TTD2011}; more work on this topic include \cite{Li:CDOMACSGA,DRT:2009}. A CDM with inexact updates was first proposed and analyzed in \cite{TRG:InexactCDM}.  Partially separable problems were independently studied in \cite{hogwild}, where an asynchronous parallel stochastic gradient algorithm was developed to solve them. 

When writing this paper, the authors were aware only of the parallel CDM  proposed and analyzed in \cite{Bradley:PCD-paper}. Several papers on the topic  appeared around the time this paper was finalized or after \cite{Necoara:Parallel, YHSD:2012, STHH:2012, STHH:2012, PYY:2013}. Further papers on various aspects of the topic of parallel CDMs, building on the work in this paper, include \cite{minibatch-ICML2013, RT:Hydra2013, FR:SPCDM2013, RT:NSync}.  


%
%

\paragraph{Contents.} We start in Section~\ref{SEC:PCDM} by describing the block structure of the problem, establishing notation and detailing assumptions. Subsequently we propose and comment in detail on two parallel coordinate descent methods. In Section~\ref{SEC:Contribute} we summarize the main contributions of this paper. In Section~\ref{SEC:Block_Samplings} we deal with issues related to the selection of the blocks to be updated in each iteration. It will involve the development of some elementary random set theory. Sections~\ref{SEC:SO}-\ref{SEC:ESO_for_PS_functions} deal with issues related to the computation of the update to the selected blocks and develop a theory of Expected Separable Overapproximation (ESO), which is a novel tool we propose for the analysis of our algorithms. In Section~\ref{SEC:Iteration_Complexity} we analyze the iteration complexity of our methods and finally,  Section~\ref{SEC:Numerical_Experiments} reports on
promising computational results. For instance, we conduct an experiment with a big data (cca 350GB) LASSO problem with a billion variables. We are able to solve the problem using one of our methods on a large memory machine with 24 cores in 2 hours, pushing the difference between the objective value at the starting iterate and the optimal point from $10^{22}$ down to $10^{-14}$. We also conduct experiments on real data problems coming from machine learning.


\section{Parallel Block Coordinate Descent Methods} \label{SEC:PCDM}

In Section~\ref{sec:block_structure} we formalize the block structure of the problem, establish notation\footnote{Table~\ref{tbl:notation} in the appendix summarizes some of the key notation used frequently in the paper. } that will be used in the rest of the paper and list assumptions. In Section~\ref{sub:Algorithms} we propose two parallel block coordinate descent methods and comment in some detail on the steps.

\subsection{Block structure, notation and assumptions}\label{sec:block_structure}

Some elements of the setup described in this section was initially used in the analysis of block coordinate descent methods  by  Nesterov \cite{Nesterov:2010RCDM} (e.g., block structure, weighted norms and block Lipschitz constants).

The block structure of \eqref{eq:P} is given by a decomposition of $\R^\N$ into $n$ subspaces as follows. Let $\U\in \R^{\N\times \N}$ be a column permutation\footnote{The reason why we work with a \emph{permutation} of the identity matrix, rather than with the identity itself, as in  \cite{Nesterov:2010RCDM}, is to enable the blocks being formed by \emph{nonconsecutive} coordinates of $x$. This way we establish notation which makes it possible to work with (i.e., analyze the properties of) multiple block decompositions, for the sake of picking the best one, subject to some criteria. Moreover, in some applications the coordinates of $x$  have a natural ordering to  which the natural or efficient block structure does not correspond.} of the $\N\times \N$ identity matrix and further let $\U = [\U_1,\U_2,\dots,\U_n]$ be a decomposition of $\U$ into $n$ submatrices, with $\U_i$ being of size $\N \times \N_i$,
where $\sum_i \N_i = \N$.

\begin{proposition}[Block decomposition\footnote{This is a straightforeard result; we do not claim any novelty  and include it solely for the benefit of the reader.}] \label{prop:decomposition}Any vector $x\in \R^\N$ can be written uniquely as \begin{equation}\label{eq:block_decomposition}x = \sum_{i=1}^n \U_i x^{(i)},\end{equation} where $x^{(i)} \in \R^{N_i}$.
Moreover, $x^{(i)}=\U_i^T x$.
\end{proposition}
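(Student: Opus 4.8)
The plan is to exploit the single structural fact that $\U$ is a permutation of the $\N\times\N$ identity matrix, hence orthogonal: $\U^T\U = \U\U^T = I_\N$. First I would translate this into the block language. Writing out $\U^T\U = I_\N$ in terms of the decomposition $\U = [\U_1,\dots,\U_n]$ yields the block orthonormality relations $\U_i^T\U_j = \delta_{ij} I_{N_i}$ for all $i,j\in\{1,\dots,n\}$, where $\delta_{ij}$ is the Kronecker delta; and writing out $\U\U^T = I_\N$ yields the resolution of identity $\sum_{i=1}^n \U_i\U_i^T = I_\N$. These two facts are all that the argument needs.

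For existence, given $x\in\R^\N$ I would simply set $x^{(i)} \eqdef \U_i^T x \in \R^{N_i}$ and verify, using the resolution of identity, that $\sum_{i=1}^n \U_i x^{(i)} = \left(\sum_{i=1}^n \U_i\U_i^T\right) x = \U\U^T x = x$, which establishes \eqref{eq:block_decomposition}.

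For uniqueness — and this simultaneously produces the ``moreover'' formula — suppose $x = \sum_{i=1}^n \U_i x^{(i)}$ is any representation with $x^{(i)}\in\R^{N_i}$. Left-multiplying by $\U_j^T$ and invoking $\U_j^T\U_i = \delta_{ij}I_{N_i}$ collapses the sum to its single surviving term, giving $\U_j^T x = x^{(j)}$. Hence the blocks of every valid decomposition are forced to equal $\U_j^T x$, which shows at once that the decomposition is unique and that it is the one claimed.

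There is no genuine obstacle here; the only step requiring a little care is the passage from the matrix identities $\U^T\U = I_\N$ and $\U\U^T = I_\N$ to their blockwise counterparts $\U_i^T\U_j = \delta_{ij}I_{N_i}$ and $\sum_i \U_i\U_i^T = I_\N$, which is a routine computation with partitioned matrices. Everything else is immediate.
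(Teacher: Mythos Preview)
Your proof is correct and essentially identical to the paper's: both derive the block orthonormality $\U_i^T\U_j = \delta_{ij}I_{N_i}$ and the resolution of identity $\sum_i \U_i\U_i^T = I_\N$ from orthogonality of $\U$, use the latter for existence, and left-multiply by $\U_j^T$ for uniqueness. The only cosmetic difference is that the paper phrases uniqueness by subtracting two hypothetical representations, whereas you directly read off $x^{(j)} = \U_j^T x$ from a single representation; your version is marginally cleaner but the content is the same.
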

\begin{proof} Noting that $UU^T=\sum_i U_i U_i^T$ is the $\N\times \N$ identity matrix, we have $x=\sum_i U_i U_i^T x$. Let us now show uniqueness.
Assume that $x =\sum_i U_i x_1^{(i)} = \sum_i U_i x_2^{(i)}$, where $x_1^{(i)},x_2^{(i)}\in \R^{N_i}$.
Since
\begin{equation}\label{eq:U_iU_j} \U_j^T \U_i =
\begin{cases} \N_j\times \N_j \quad \text{identity matrix,} & \text{ if } i=j,\\
 \N_j\times \N_i \quad \text{zero matrix,}&  \text{ otherwise,}
 \end{cases}
\end{equation}
for every $j$ we get $0 = U_j^T (x-x) = U_j^T \sum_i U_i (x_1^{(i)}-x_2^{(i)}) = x_1^{(j)}-x_2^{(j)}$.
\end{proof}

In view of the above proposition, from now on we  write $x^{(i)}\eqdef U_i^T x \in \R^{N_i}$, and  refer to $x^{(i)}$ as the \emph{$i$-th block} of $x$.  The definition of partial separability in the introduction is with respect to these blocks. For simplicity,  we will sometimes write $x = (x^{(1)},\dots,x^{(n)})$.

\paragraph{Projection onto a set of blocks.} For $S\subset \NN$ and $x\in \R^\N$ we write
\begin{equation}\label{eq:lllop09}x_{[S]} \eqdef \sum_{i\in S} U_i x^{(i)}.\end{equation}
That is, given $x\in \R^\N$,  $x_{[S]}$ is the vector in $\R^\N$ whose blocks $i\in S$ are identical to those of $x$, but whose other blocks are zeroed out. In view of Proposition~\ref{prop:decomposition}, we can equivalently define $x_{[S]}$ block-by-block as follows
\begin{equation}\label{eq:lllop09jhkjh}(x_{[S]})^{(i)} = \begin{cases}x^{(i)}, \qquad & i\in S,\\ 0 \;(\in\R^{N_i}),  \qquad &\text{otherwise.}\end{cases}\end{equation}

\paragraph{Inner products.} The standard Euclidean inner product in spaces $\R^\N$ and $\R^{N_i}$, $i\in \NN$, will be denoted by $\ve{\cdot}{\cdot}$. Letting $x,y \in \R^\N$, the relationship between these inner products is given by
\[\ve{x}{y} \overset{\eqref{eq:block_decomposition}}{=} \ve{\sum_{j=1}^n \U_j x^{(j)}}{\sum_{i=1}^n \U_i y^{(i)}} = \sum_{j=1}^n \sum_{i=1}^n \ve{U_i^T U_j x^{(j)}}{y^{(i)}} \overset{\eqref{eq:U_iU_j}}{=}  \sum_{i=1}^n \ve{x^{(i)}}{y^{(i)}}.\]
For any $w\in \R^n$ and $x,y\in \R^\N$ we further define \begin{equation}\label{eq:weighted_inner_product}\ve{x}{y}_w \eqdef \sum_{i=1}^n w_i \ve{x^{(i)}}{y^{(i)}}.\end{equation}
For vectors  $z=(z_1,\dots,z_n)^T \in \R^n$ and $w = (w_1,\dots,w_n)^T \in \R^n$ we write $w\odot z \eqdef (w_1 z_1, \dots, w_n z_n)^T$.

\paragraph{Norms.} Spaces $\R^{N_i}$, $i \in \NN$, are equipped with a pair of conjugate norms: $\|t\|_{(i)} \eqdef \ve{B_i t}{t}^{1/2}$, where $B_i$ is an $N_i\times N_i$ positive definite matrix and $\|t\|_{(i)}^* \eqdef \max_{\|s\|_{(i)}\leq 1} \ve{s}{t} = \ve{B_i^{-1}t}{t}^{1/2}$, $t\in \R^{N_i}$. For $w\in \R^n_{++}$, define a pair of conjugate norms in $\R^\N$ by
\begin{equation}\label{eq:norms} \|x\|_w = \left[\sum_{i=1}^n w_i \ncs{\vc{x}{i}}{i}\right]^{1/2}, \quad
\|y\|_w^* \eqdef \max_{\|x\|_w\leq 1} \ve{y}{x} = \left[\sum_{i=1}^n w_i^{-1} ( \nbd{y^{(i)}}{i})^2\right]^{1/2}.\end{equation}
Note that these norms are induced by the inner product \eqref {eq:weighted_inner_product} and the matrices $B_1,\dots,B_n$. Often we will use $w=L\eqdef (L_1,L_2,\dots,L_n)^T\in \R^n$, where the constants $L_i$ are defined below.


\paragraph{Smoothness of $f$.} We assume throughout the paper that the gradient of $f$ is block  Lipschitz, uniformly in $x$, with positive constants $\Lip_1,\dots,\Lip_n$, i.e., that for all $x\in \R^\N$, $i\in \NN$ and $t\in \R^{N_i}$,
\begin{equation}\label{eq:f_iLipschitzder}
 \nbd{\nabla_i f(x+\U_i t)-\nabla_i f(x)}{i} \leq \Lip_i \nbp{t}{i},
  \end{equation}
where $\nabla_i f(x)  \eqdef (\nabla f(x))^{(i)} = \U^T_i \nabla f(x) \in \R^{N_i}$. An important consequence of \eqref{eq:f_iLipschitzder} is the following standard inequality \cite{NesterovBook}:
\begin{equation}\label{eq:Lipschitz_ineq}f(x+\U_i t) \leq f(x) + \ve{\nabla_i f(x)}{t} + \tfrac{\Lip_i}{2}\nbp{t}{i}^2.\end{equation}

\paragraph{Separability of $\cPsi$.} We assume that\footnote{For examples of separable and block separable functions we refer the reader to \cite{RT:UCDC}. For instance, $\Omega(x)=\|x\|_1$  is separable and block separable (used in sparse optimization); and $\Omega(x)=\sum_i \|x^{(i)}\|$, where the norms are standard Euclidean norms, is block separable (used in group lasso). One can model block constraints by setting $\Omega_i(x^{(i)}) = 0$ for $x \in X_i$, where $X_i$ is some closed convex set, and $\Omega_i(x^{(i)})=+\infty$ for $x \notin X_i$.} $\cPsi: \R^\N\to \R\cup \{+\infty\}$ is (block) separable, i.e., that it can be decomposed as follows:
\begin{equation}\label{eq:Psi_block_def}
  \cPsi(x)=\sum_{i=1}^n \cPsi_i(x^{(i)}),
\end{equation}
where the functions $\cPsi_i:\R^{N_i}\to \R \cup \{+\infty\}$ are convex and closed.

\paragraph{Strong convexity.} In one of our two complexity results (Theorem~\ref{thm:complexity-strongly-convex-case}) we will assume that either $f$ or $\cPsi$ (or both) is strongly convex. A function $\phi:\R^N\to \R\cup \{+\infty\}$ is strongly convex with respect to the norm $\|\cdot\|_w$ with convexity parameter $\mu_{\phi}(w) \geq 0$ if for all $x,y \in \dom \phi$,
\begin{equation}\label{eq:strong_def}\phi(y)\geq \phi(x) + \ve{\phi'(x)}{y-x} + \tfrac{\mu_{\phi}(w)}{2}\|y-x\|_w^2, \end{equation}
where $\phi'(x)$ is any subgradient of $\phi$ at $x$. The case with $\mu_\phi(w)=0$ reduces to convexity. Strong convexity of $F$ may come from $f$ or $\cPsi$ (or both); we write $\mu_f(w)$ (resp.\ $\mu_\cPsi(w)$) for the (strong) convexity parameter of $f$ (resp.\ $\cPsi$). It follows from \eqref{eq:strong_def} that \begin{equation}\label{eq:mu_F}\mu_{F}(w) \geq \mu_{f}(w)+ \mu_{\cPsi}(w).\end{equation}

The following  characterization of strong convexity will  be useful. For all $x,y \in \dom \phi$ and $\lambda \in [0,1]$,
\begin{equation}\label{eq:strong_2}
\phi(\lambda x+ (1-\lambda) y) \leq \lambda \phi(x) + (1-\lambda)\phi(y) - \tfrac{\mu_\phi(w)\lambda(1-\lambda)}{2}\|x-y\|_w^2.
\end{equation}
It can be shown using \eqref{eq:Lipschitz_ineq} and \eqref{eq:strong_def} that $\mu_f(w)\leq \tfrac{\Lip_i}{w_i}$.


\subsection{Algorithms} \label{sub:Algorithms}

In this paper we develop and study two generic parallel coordinate descent methods. The main method is PCDM1; PCDM2 is its ``regularized'' version which explicitly enforces monotonicity. As we will see, both of these methods come in many variations, depending on how Step~ 3 is performed.

\begin{algorithm}[!ht]
\caption{Parallel Coordinate Descent Method 1 (PCDM1)}
\begin{algorithmic}[1] \label{alg:PCD1}
 \STATE Choose initial point $x_0\in\R^\N$
\FOR {$k=0,1,2,\dots$}
 \STATE Randomly generate a set of blocks $S_k \subseteq \{1,2,\dots,n\}$
 \STATE $x_{k+1} \leftarrow x_k + (h(x_k))_{[S_k]}$
\ENDFOR
\end{algorithmic}
\end{algorithm}

\begin{algorithm}[!ht]
\caption{Parallel Coordinate Descent Method 2 (PCDM2)}
\begin{algorithmic}[1] \label{alg:PCD2}
 \STATE Choose initial point $x_0\in\R^\N$
\FOR {$k=0,1,2,\dots$}
 \STATE Randomly generate a set of blocks $S_k \subseteq \{1,2,\dots,n\}$
 \STATE $x_{k+1} \leftarrow x_k + (h(x_k))_{[S_k]}$
 \STATE If $F(x_{k+1}) > F(x_k)$, then $x_{k+1} \leftarrow x_k$
\ENDFOR
\end{algorithmic}
\end{algorithm}


Let us comment on the individual steps of the two methods.

\textbf{Step 3.} At the beginning of iteration $k$ we pick a random set ($S_k$) of blocks to be updated (in parallel) during that iteration. The set $S_k$ is a realization of a random set-valued mapping $\Srv$ with values in $2^\NN$ or, more precisely, it the sets $S_k$ are iid random sets with the distribution of $\hat{S}$. For brevity, in this paper we refer to such a mapping by the name \emph{sampling}. We limit our attention to \emph{uniform} samplings, i.e., random sets having the following property: $\Prob(i \in \hat{S})$ is independent of $i$. That is, the probability that a block gets selected is the same for all blocks. Although we give an iteration complexity result covering all such samplings (provided that each block has a chance to be updated, i.e., $\Prob(i \in \hat{S}) > 0$), there are interesting subclasses of uniform samplings (such as doubly uniform and nonoverlapping uniform samplings; see Section~\ref{SEC:Block_Samplings}) for which we give better results.


\textbf{Step 4.} For $x\in \R^\N$ we define\footnote{A similar map was used in \cite{Nesterov:2010RCDM} (with $\Omega\equiv 0$ and $\beta=1$) and \cite{RT:UCDC} (with $\beta=1$) in the analysis of \emph{serial} coordinate descent methods in the smooth and composite case, respectively. In loose terms, the novelty here is the introduction of the 
parameter $\beta$ and in developing theory which describes what value $\beta$ should have. Maps of this type are known as composite  gradient mapping in the literature, and were introduced in \cite{Nesterov:2007composite}}.
\begin{equation}\label{eq:h(x)} h(x) \eqdef \arg \min_{h \in \R^\N} H_{\beta,w}(x,h),\end{equation}
where
\begin{equation}\label{eq:H_{beta,w}}H_{\beta,w}(x,h) \eqdef f(x) + \ve{\nabla f(x)}{h} + \tfrac{\beta}{2}\|h\|_w^2 + \cPsi(x+h),\end{equation}
and $\beta>0$, $w=(w_1,\dots,w_n)^T \in \R^n_{++}$ are parameters of the method that we will comment on later. Note that in view of \eqref{eq:block_decomposition}, \eqref{eq:norms} and \eqref{eq:Psi_block_def}, $H_{\beta,w}(x,\cdot)$ is block separable;
\[H_{\beta,w}(x,h) = f(x) + \sum_{i=1}^n \left\{ \ve{\nabla_i f(x)}{h^{(i)}} + \tfrac{\beta w_i}{2}\|h^{(i)}\|_{(i)}^2 + \cPsi_i(x^{(i)} + h^{(i)})\right\}.\]
Consequently, we have $h(x) = (h^{(1)}(x),\cdots, h^{(n)}(x)) \in \R^\N$, where
\[h^{(i)}(x) = \arg \min_{t\in \R^{N_i}} \{\ve{\nabla_i f(x)}{t} + \tfrac{\beta w_i}{2}\|t\|_{(i)}^2 + \cPsi_i(x^{(i)}+t)\}.\]
We mentioned in the introduction that besides (block) separability, we require $\cPsi$ to be ``simple''. By this we mean that the above optimization problem leading to $h^{(i)}(x)$ is ``simple'' (e.g., it has  a closed-form solution).  Recall from \eqref{eq:lllop09jhkjh} that $(h(x_k))_{[S_k]}$ is the vector in $\R^\N$ identical to $h(x_k)$ except for blocks $i \notin S_k$, which are zeroed out. Hence, Step 4 of both methods can be written as follows:
\begin{center}
\begin{tabular}{l}
  In parallel for $i\in S_k$ do: \; $x_{k+1}^{(i)} \leftarrow x_k^{(i)} + h^{(i)}(x_k)$.
\end{tabular}
\end{center}

Parameters $\beta$ and $w$ depend on $f$ and $\Srv$ and stay constant throughout the algorithm. We are not ready yet to explain \emph{why} the update is computed via \eqref{eq:h(x)} and \eqref{eq:H_{beta,w}} because we need  technical tools, which will be developed in Section~\ref{SEC:Block_Samplings}, to do so. Here it suffices to say that the parameters $\beta$ and $w$ come from a separable quadratic overapproximation of $\E[f(x+h_{[\Srv]})]$, viewed as a function of $h\in \R^\N$. Since expectation is involved, we refer to this by the name Expected Separable Overapproximation (ESO). This novel concept, developed in this paper, is one of the main tools of our complexity analysis. Section~\ref{SEC:SO} motivates and formalizes the concept, answers the \emph{why} question, and develops some basic ESO theory.

Section~\ref{SEC:ESO_for_PS_functions} is devoted to the computation of $\beta$ and $w$ for partially separable $f$ and various special classes of uniform samplings $\Srv$. Typically we will have $w_i=\Lip_i$, while $\beta$ will depend on \emph{easily computable}  properties of $f$  and $\Srv$. For example, if $\Srv$ is chosen as a subset of $\NN$ of cardinality  $\tau$, with each subset chosen with the same probability (we say that $\Srv$ is $\tau$-nice) then, assuming $n>1$, we may choose $w=L$ and $\beta =1+ \tfrac{(\omega-1)(\tau-1)}{n-1}$, where $\omega$ is the degree of partial separability of $f$. More generally, if $\Srv$ is any uniform sampling with the property $|\Srv|=\tau$ with probability 1, then we may choose $w=L$ and $\beta=\min\{\omega,\tau\}$. Note that in both cases $w=L$ and that the latter $\beta$ is always larger than (or equal to) the former one. This  means, as we will see in Section~\ref{SEC:Iteration_Complexity}, that we can give better complexity results for the former,  more specialized, sampling. We analyze several more options for $\Srv$ than the two just described, and compute parameters $\beta$ and $w$ that should be used with them (for a summary, see Table~\ref{tbl:ESO}).


\textbf{Step 5.} The reason why, besides PCDM1, we also consider PCDM2, is the following: in some situations we are not able to analyze the iteration complexity of PCDM1 (non-strongly-convex $F$ where monotonicity of the method is not guaranteed by other means than by directly enforcing it by inclusion of Step 5). Let us remark that this issue arises for general $\cPsi$ only. It does not exist for $\cPsi=0$, $\cPsi(\cdot) = \lambda \|\cdot\|_1$ and for $\cPsi$ encoding simple constraints on individual blocks; in these cases one does not need to consider PCDM2. Even in the case of general $\cPsi$ we sometimes get monotonicity for free, in which case there is no need to enforce it. Let us stress, however, that we do not recommend implementing PCDM2 as this would introduce too much overhead; in our experience PCDM1 works well even in cases when we can only analyze PCDM2.

\section{Smmary of Contributions}
\label{SEC:Contribute}

In this section we summarize the main contributions of this paper (not in order of significance).

\begin{enumerate}
\item \textbf{Problem generality.} We give the first complexity analysis for  \emph{parallel} coordinate descent methods for  problem \eqref{eq:P} in its full generality.

\item \textbf{Complexity.} We show theoretically (Section~\ref{SEC:Iteration_Complexity}) and numerically (Section~\ref{SEC:Numerical_Experiments}) that PCDM accelerates on its serial counterpart for partially separable problems. In particular, we establish two complexity theorems giving lower bounds on the number of iterations $k$ sufficient for one or both of the PCDM variants (for details, see the precise statements in Section~\ref{SEC:Iteration_Complexity}) to produce a random iterate $x_k$ for which the problem is approximately solved with high probability, i.e.,  $\Prob(F(x_k)-F^* \leq \epsilon) \geq 1-\rho$.  The results,  summarized in Table~\ref{tbl:summary_of_complexity_results}, hold under the standard assumptions listed in Section~\ref{sec:block_structure} \emph{and} the additional assumption that $f,\hat{S},\beta$ and $w$ satisfy the following inequality for all $x,h\in \R^N$:

\begin{equation}\label{eq:ESO_first}\Exp[f(x+h_{[\hat{S}]})] \leq f(x) + \tfrac{\Exp[|\hat{S}|]}{n}\left(\ve{\nabla f(x)}{h} + \tfrac{\beta}{2}\|h\|_w^2\right).\end{equation}

This inequality, which we call Expected Separable Overapproximation (ESO),  is the main new theoretical tool that we develop in this paper for the analysis of our methods (Sections~\ref{SEC:Block_Samplings}-\ref{SEC:ESO_for_PS_functions} are devoted to the development of this theory).

\begin{table}[!h]
\begin{center}
\begin{tabular}{|c|c|c|}
\hline
Setting & Complexity & Theorem\\
\hline
\phantom{$a$} & & \\
Convex $f$  & ${\cal O} \left(\frac{\beta n}{\Exp[|\hat{S}|] }\frac{1}{\epsilon}\log\left(\tfrac{1}{\rho}\right) \right)$ & 19 \\
\phantom{$a$} & &\\
\hline
\phantom{$a$} & &\\
\begin{tabular}{c}
Strongly convex $f$ \\
$\mu_f(w)+\mu_\cPsi(w)>0$
\end{tabular}  & $\frac{n}{\Exp[|\hat{S}|]} \frac{\beta + \mu_\cPsi(w)}{\mu_f(w)+\mu_\cPsi(w)} \log\left(\frac{F(x_0)-F^*}{\epsilon \rho}\right)$ & 20\\
\phantom{$a$} & &\\
\hline
\end{tabular}
\end{center}
\caption{Summary of the main complexity results for PCDM established in this paper.}
\label{tbl:summary_of_complexity_results}
\end{table}

The main observation here is that as the average number of block updates per iteration increases (say, $\hat{\tau}=\Exp[|\hat{S}|]$), enabled by the utilization of more processors, the leading term in the complexity estimate, $n/\hat{\tau}$, decreases in proportion. However, $\beta$ will generally grow with $\hat{\tau}$, which has an adverse effect on the speedup. Much of the theory in this paper goes towards producing formulas for $\beta$ (and $w$), for partially separable $f$ and various classes of \emph{uniform} samplings $\hat{S}$. Naturally, the ideal situation is when $\beta$ does not grow with  $\hat{\tau}$ at all, or if it only grows very slowly. We show that this is the case for partially separable functions $f$ with small $\omega$. For instance, in the extreme case when $f$ is separable ($\omega=1$), we have $\beta=1$ and  we obtain linear speedup in $\hat{\tau}$. As $\omega$ increases, so does $\beta$, depending on the law governing $\hat{S}$. Formulas for $\beta$ and $\omega$ for various samplings $\hat{S}$ are summarized in Table~\ref{tbl:ESO}.

\item \textbf{Algorithm unification.} Depending on the choice of the block structure (as implied by the choice of $n$ and the matrices $U_1,\dots,U_n$) and the way blocks are selected at every iteration (as given by the choice of  $\Srv$), our framework encodes a \emph{family} of known and new algorithms\footnote{All the methods are in their proximal variants due to the inclusion of the term $\Omega$ in the objective.} (see Table~\ref{tbl:special_cases}). 

\begin{table}[!h]
\begin{center}
\begin{tabular}{|c|c|c|}
\hline
Method & Parameters & Comment\\
\hline
Gradient descent  & $n=1$  & \cite{Nesterov:2007composite}\\
Serial random CDM  & $\N_i=1$ for all $i$ and $\Prob(|\Srv|=1)=1$ & \cite{RT:UCDC} \\
Serial block random CDM  & $N_i\geq 1$ for all $i$ and $\Prob(|\Srv|=1)=1$ & \cite{RT:UCDC}\\
Parallel random CDM & $\Prob(|\hat{S}|>1) > 0 $ & NEW\\
Distributed random CDM & $\hat{S}$ is a distributed sampling &  \cite{RT:Hydra2013}\footnote{Remark at revision: This work was planned as a follow-up paper at the time of submitting this paper in Nov 2012; but has appeared in the time between than and revision.} \\
\hline
\end{tabular}
\end{center}
\caption{New and known gradient methods obtained as special cases of our general framework.}
\label{tbl:special_cases}
\end{table}

In particular, PCDM is the first method which ``continuously'' interpolates between serial coordinate descent  and  gradient (by manipulating $n$ and/or $\E[|\hat{S}|]$).

\item \textbf{Partial separability.}  We give the first analysis of a coordinate descent type method dealing with a partially separable loss / objective. In order to run the method, we need to know the Lipschitz constants $\Lip_i$ and the degree of partial separability $\omega$. It is crucial that these quantities are often easily computable/predictable in the huge-scale setting. For example, if $f(x) = \tfrac{1}{2}\|Ax-b\|^2$ and we choose all blocks to be of size $1$, then $L_i$ is equal to the squared Euclidean norm of the $i$-th column of $A$ and $\omega$ is equal to the maximum number of nonzeros in a row of $A$. Many problems in the big data setting have small $\omega$ compared to $n$.

\item \textbf{Choice of blocks.} To the best of our knowledge, existing randomized strategies for paralleling  gradient-type methods (e.g.,  \cite{Bradley:PCD-paper}) assume that $\hat{S}$ (or an equivalent thereof, based on the method) is chosen as a subset of $[n]$ of a fixed cardinality, uniformly at random. We refer to such $\hat{S}$ by the name \emph{nice sampling} in this paper. We relax this assumption and our treatment is hence much more general. In fact, we allow for $\hat{S}$ to be any \emph{uniform} sampling. It is possible to further consider \emph{nonuniform samplings}\footnote{Revision note: See  \cite{RT:NSync}.}, but this is beyond the scope of this paper. 

In particular, as a special case, our method allows for a variable number of blocks to be updated throughout the iterations (this is achieved by the introduction of  \emph{doubly uniform samplings}). This may be useful in some settings such as when the problem is being solved in parallel by $\tau$  unreliable processors each of which computes its update $h^{(i)}(x_k)$ with probability $p_b$ and is busy/down with probability $1-p_b$ (\emph{binomial sampling}).

Uniform, doubly uniform, nice, binomial and other samplings are defined, and their properties studied, in Section~\ref{SEC:Block_Samplings}.

\item \textbf{ESO and formulas for $\beta$ and $w$.} 
In Table~\ref{tbl:ESO} we list parameters $\beta$ and $w$ for which ESO inequality \eqref{eq:ESO_first} holds. Each row corresponds to a specific sampling $\hat{S}$  (see Section~\ref{SEC:Block_Samplings} for the definitions). The last 5 samplings are special cases of one or more of the first three samplings. Details such as what is $\nu,\gamma$ and ``monotonic'' ESO are explained in appropriate sections later in the text. When a specific sampling $\hat{S}$ is used in the algorithm to select blocks in each iteration, the corresponding parameters $\beta$ and $w$ are to be used in the method for the computation of the update (see \eqref{eq:h(x)} and \eqref{eq:H_{beta,w}}). 

\begin{table}[!ht]
\centering
\footnotesize
\begin{tabular}{|l|c|c|c|c|c|}
\hline
\ABC sampling $\Srv$& $\Exp[|\hat{S}|]$ & $\beta$ & $w$ & \begin{tabular}{c}
ESO\\
monotonic?
\end{tabular} & Follows from \\
\hline
\hline
\ABC uniform & $\E[|\Srv|] $ & $1$ & $\nu \odot \Lip$ & No & Thm~\ref{thm:proper_uniform_ESO}\\
 \hline
\ABC nonoverlapping uniform & $\tfrac{n}{l}$  & $1$ & $\gamma \odot \Lip$ & Yes & Thm~\ref{thm:ESO-nonoverlapping}\\
 \hline
\ABC doubly uniform & $\Exp[|\Srv|]$  & $1+\frac{ (\omega-1)\left(\frac{\Exp[|\Srv|^2]}{\Exp[|\Srv|]}-1\right)}{\max(1,n-1)}$ & $\Lip$ & No & Thm~\ref{thm:ESO-DU}\\
 \hline
 \hline
\ABC $\tau$-uniform  & $\tau$  & $\min\{\omega,\tau\}$ & $\Lip$ & Yes & Thm~\ref{thm:proper_uniform_ESO}\\
 \hline
\ABC $\tau$-nice & $\tau$  & $1+      \frac{ (\omega-1)(\tau-1)}{\max(1,n-1)}$  &  $\Lip$ & No & Thm~\ref{thm:ESO-nice}/\ref{thm:ESO-DU} \\
 \hline
\ABC $(\tau,p_b)$-binomial & $\tau p_b$  & $1+      \frac{p_b(\omega-1)(\tau-1)}{\max(1,n-1)}$ & $\Lip$ & No & Thm~\ref{thm:ESO-DU}\\
 \hline
\ABC serial & $1$  & $1$ & $\Lip$ & Yes & Thm~\ref{thm:ESO-nonoverlapping}/\ref{thm:ESO-nice}/\ref{thm:ESO-DU} \\
 \hline
\ABC fully parallel & $n$  & $\omega$ & $\Lip$ & Yes & Thm~\ref{thm:ESO-nonoverlapping}/\ref{thm:ESO-nice}/\ref{thm:ESO-DU}\\
\hline
\end{tabular}
\caption{Values of parameters $\beta$ and $w$ for various samplings $\hat{S}$.}
\label{tbl:ESO}
\end{table}

En route to proving the iteration complexity results for our algorithms, we develop a theory of deterministic and expected separable overapproximation (Sections~\ref{SEC:SO} and \ref{SEC:ESO_for_PS_functions}) which we  believe  is of independent interest, too. For instance, methods based on ESO can be compared favorably to the Diagonal Quadratic Approximation (DQA) approach used in the decomposition of stochastic optimization programs \cite{DQA1995}.


\item \textbf{Parallelization speedup.}
Our complexity results can be used to derive theoretical \emph{parallelization speedup factors}. For several variants of our method, in case of a non-strongly convex objective, these are given in Section~\ref{sec:Complexity-convex} (Table~\ref{tbl:upperbounds}). For instance, in the case when all block are updated at each iteration (we later refer to $\Srv$ having this property by the name \emph{fully parallel} sampling), the speedup factor is equal to $\tfrac{n}{\omega}$. If the problem is separable ($\omega=1$), the speedup is equal to $n$; if the problem is not separable ($\omega=n$), there may be no speedup. For strongly convex $F$ the situation is even better; the details are given in Section~\ref{sec:composite_strong}.


\item \textbf{Relationship to existing results.}    To the best of our knowledge, there are just two papers analyzing a parallel coordinate descent algorithm for convex optimization problems\cite{Bradley:PCD-paper, Necoara:Parallel}. In the first paper all blocks are of size $1$, $\Srv$ corresponds to what we call in this paper a \emph{$\tau$-nice} sampling (i.e., all sets of $\tau$ coordinates are updated at each iteration with equal probability) and hence their algorithm is somewhat comparable to one of the many variants of our general method. While the analysis in \cite{Bradley:PCD-paper} works for a restricted range of values of $\tau$, our results hold for all $\tau\in \NN$. Moreover, the authors consider a more restricted class of functions $f$ and the special case $\cPsi=\lambda \|x\|_1$, which is simpler to analyze. Lastly, the theoretical speedups obtained in \cite{Bradley:PCD-paper}, when compared to the serial CDM method, depend on a quantity $\sigma$ that is hard to compute in big data settings (it involves the computation of an eigenvalue of a huge-scale matrix). Our speedups are expressed in terms of natural and easily computable quantity: the degree $\omega$ of partial separability of $f$. In the setting considered by \cite{Bradley:PCD-paper}, in which more structure is available, it turns out that $\omega$ is  an upper bound\footnote{Revision note requested by a reviewer: In the time since this paper was posted to arXiv, a number of follow-up papers were written analyzing parallel coordinate descent methods and establishing connections between a discrete quantity analogous to $\omega$ (degree of partial/Nesterov separability) and a spectral quantity analogous to $\sigma$ (largest eigenvalue of a certain matrix), most notably \cite{FR:SPCDM2013,RT:Hydra2013}. See also \cite{minibatch-ICML2013}, which uses a spectral quantity, which can be directly compared to $\omega$.} 
on $\sigma$. Hence, we show that one can develop the theory in a more general setting, and that it is not necessary to compute $\sigma$ (which may be complicated in the big data setting).  The parallel CDM method of the second paper \cite{Necoara:Parallel} only allows all blocks to be updated at each iteration. Unfortunately, the analysis (and the method) is too coarse as it does not offer any theoretical speedup when compared to its serial counterpart. In the special case when only a single block is updated in each iteration, uniformly at random, our theoretical results specialize to those established in \cite{RT:UCDC}.


\item \textbf{Computations.} We demonstrate that our method is able to solve a LASSO problem involving a matrix with a billion columns and 2 billion rows on a large memory node with 24 cores in 2 hours (Section~\ref{SEC:Numerical_Experiments}), achieving a $20\times$ speedup compared to the serial variant and pushing the residual by more than 30 degrees of magnitude. While this is done on an artificial problem under ideal conditions (controlling for small $\omega$), large speedups are possible in real data with $\omega$ small relative to $n$. We also perform additional experiments on real data sets from machine learning (e.g., training linear SVMs) to illustrate that the predictions of our theory match reality.

\item \textbf{Code.} The open source code with an efficient implementation of the algorithm(s) developed in this paper is published here: \href{http://code.google.com/p/ac-dc/}{http://code.google.com/p/ac-dc/.}
\end{enumerate}

\section{Block Samplings} \label{SEC:Block_Samplings}

In Step 3 of both PCDM1 and PCDM2 we choose a random set  of blocks  $S_k$ to be updated at the current iteration.
Formally, $S_k$ is a realization of a  \emph{random set-valued mapping} $\Srv$ with values in $2^{\NN}$, the collection of subsets of $[n]$. For brevity, in this paper we  refer to $\Srv$ by the name \emph{sampling}. A sampling $\Srv$ is uniquely characterized by the probability mass function
\begin{equation}
\label{eq:p(S)-general} 
\pp(S) \eqdef \Prob(\Srv = S), \quad S\subseteq \NN;
\end{equation}
that is, by assigning probabilities to all subsets of $\NN$. Further, we let $p = (p_1,\dots,p_n)^T$, where
\begin{equation}\label{eq:p_i}p_i \eqdef \Prob(i \in \Srv).\end{equation}
In Section~\ref{sec:Block_Samplings_1} we describe those samplings for which we analyze our methods and in Section~\ref{sec:technical} we prove several technical results,  which will be useful in the rest of the paper.

\subsection{Uniform, Doubly Uniform and Nonoverlapping Uniform Samplings} \label{sec:Block_Samplings_1}

A sampling is \emph{proper}  if $p_i>0$ for all blocks $i$. That is, from the perspective of PCDM, under a proper sampling  each block gets updated with a positive probability at each iteration. Clearly, PCDM can not converge for a sampling that is not proper.

A sampling $\Srv$ is \emph{uniform} if all blocks get updated with the same probability, i.e., if $p_i=p_j$ for all $i,j$. We show in \eqref{eq:simple2unif} that, necessarily, $p_i = \tfrac{\E[|\Srv|]}{n}$.  Further, we say $\Srv$ is \emph{nil} if $\Prob(\emptyset) = 1$. Note that a uniform sampling is proper if and only if it is not nil.

All our iteration complexity results in this paper are for PCDM used with a proper uniform sampling (see Theorems~\ref{thm:complexity-convex-case} and \ref{thm:complexity-strongly-convex-case}) for which we can compute $\beta$ and $w$ giving rise to an inequality (we we call ``expected separable overapproximation'') of the form \eqref{eq:general_form_for_expectation}. We derive such inequalities for all proper uniform samplings (Theorem~\ref{thm:proper_uniform_ESO}) as well as refined results for two special subclasses thereof: doubly uniform samplings (Theorem~\ref{thm:ESO-DU}) and nonoverlapping uniform samplings (Theorem~\ref{thm:ESO-nonoverlapping}). We will now give the definitions:

\begin{enumerate}
\item \textbf{Doubly Uniform (DU) samplings.} A DU sampling is one which generates all sets of equal cardinality with equal probability. That is, $\pp(S')=\pp(S'')$ whenever $|S'| = |S''|$. The name comes from the fact that this definition postulates a different uniformity property, ``standard'' uniformity is a consequence. Indeed, let us show that a DU sampling is necessarily uniform. Let $q_j = \Prob(|\Srv| = j)$ for $j=0,1,\dots, n$ and note that from the definition   we know that whenever $S$ is of cardinality $j$, we have $\pp(S) = q_j/{n \choose j}$. Finally, using this we obtain
\[p_i = \sum_{S:i\in S}  \pp(S) = \sum_{j=1}^n \sum_{\substack{S: i \in S\\ |S|=j}} \pp(S) = \sum_{j=1}^n \sum_{\substack{S: i \in S\\ |S|=j}} \tfrac{q_j}{{n \choose j}} =\sum_{j=1}^n   \tfrac{{n-1\choose j-1}}{{n\choose j}} q_j = \tfrac{1}{n}\sum_{j=1}^n q_j j = \tfrac{\E[|\Srv|]}{n}.\]

It is clear that each DU sampling is uniquely characterized by the vector of probabilities $q$; its density function is given by
\begin{equation}\pp(S) = \frac{q_{|S|}}{{n \choose |S|}}, \quad S \subseteq \NN. \label{eq:p(S)}\end{equation}

\item \textbf{Nonoverlapping Uniform (NU) samplings.} A  NU sampling is one which is uniform and which assigns positive probabilities only to sets forming a partition of $\NN$.
Let $S^1,S^2,\dots, S^l$ be a partition of $\NN$, with $|S^j|>0$ for all $j$. The density function of a NU sampling corresponding to this partition is given by
    \begin{equation}\label{eq:non-overlap-uniform}\pp(S) = \begin{cases}\tfrac{1}{l}, & \quad \text{if } S \in \{S^1,S^2,\dots,S^l\},\\ 0, & \quad \text{otherwise.}\end{cases}
    \end{equation}
    Note that $\E[|\Srv|] = \tfrac{n}{l}$.

\end{enumerate}

\noindent Let us now describe several interesting special cases of DU and NU samplings:

\begin{itemize}
\item[3.] \textbf{Nice sampling.} Fix $1\leq \tau \leq n$. A $\tau$-nice sampling is a DU sampling with $q_\tau =  1$.

\emph{Interpretation:} There are $\tau$ processors/threads/cores available. At the beginning of each iteration we choose a set of blocks using a $\tau$-nice sampling (i.e., each subset of $\tau$ blocks is chosen with the same probability), and assign each block to a dedicated processor/thread/core. Processor assigned with block $i$ would compute and apply the update $h^{(i)}(x_k)$. This is the sampling we use in our computational experiments.

\item[4.] \textbf{Independent sampling.} Fix $1\leq \tau \leq n$. A $\tau$-independent sampling is a DU sampling with
\[q_k = \begin{cases}{n \choose k} c_k, \quad & k=1,2,\dots,\tau,\\
0, \quad & k=\tau+1, \dots, n,
\end{cases}\]
where $c_1 = \left(\tfrac{1}{n}\right)^\tau$ and $c_{k} = \left(\tfrac{k}{n}\right)^\tau - \sum_{i=1}^{k-1} {k \choose i}c_i$ for $k \geq 2$.

\emph{Interpretation:} There are $\tau$ processors/threads/cores available. Each processor chooses one of the $n$ blocks, uniformly at random and independently of the other processors. It turns out that the set $\Srv$ of  blocks selected this way is DU with $q$ as given above. Since in one parallel iteration of our methods each block in $\Srv$ is updated exactly once, this means that if two or more processors pick the same block, all but one will be idle. On the other hand, this sampling can be generated extremely easily and in parallel! For $\tau\ll n$ this sampling is a good (and fast) approximation of the $\tau$-nice sampling. For instance, for $n=10^3$ and $\tau=8$ we have $q_8=0.9723$, $q_7=0.0274$, $q_6=0.0003$ and $q_k\approx 0$ for $k=1,\dots,5$.

\item[5.] \textbf{Binomial sampling.} Fix $1\leq \tau \leq n$ and $0< p_b \leq 1$. A $(\tau,p_b)$-binomial sampling is defined as a DU sampling with
\begin{equation} \label{eq:bin1}q_k =  {\tau \choose k} p_b^k (1-p_b)^k, \quad k=0,1,\dots,\tau.\end{equation}
Notice that $\E[|\Srv|]  =\tau p_b$ and $\E[|\Srv|^2] = \tau p_b(1+ \tau p_b - p_b)$.

\emph{Interpretation:} Consider the following  situation with \emph{independent equally unreliable processors}. We have $\tau$ processors, each of which is at any given moment available with probability $p_b$ and busy with probability $1-p_b$, independently of the availability of the other processors. Hence, the number of available processors (and hence blocks that can be updated in parallel) at each iteration is a binomial random variable with parameters $\tau$ and $p_b$. That is, the number of available processors is equal to $k$ with probability $q_k$.
\begin{itemize}
\item \emph{Case 1 (explicit selection of blocks):} We learn that $k$ processors are available \emph{at the beginning} of each iteration. Subsequently, we choose $k$ blocks using a $k$-nice sampling and ``assign one block'' to each of the $k$ available processors.
\item \emph{Case 2 (implicit selection of blocks):} We choose $\tau$ blocks using a $\tau$-nice sampling and assign one to each of the $\tau$ processors (we do not know which will be available at the beginning of the iteration). With probability $q_k$, $k$ of these will send their updates. It is easy to check that the resulting effective sampling of blocks is $(\tau,p_b)$-binomial.
\end{itemize}


\item[6.] \textbf{Serial sampling.} This is a DU sampling with $q_1 =  1$. Also, this is a NU sampling with $l=n$ and $S^j=\{j\}$ for $j=1,2,\dots,l$. That is, at each iteration we update a single block, uniformly at random. This was studied in \cite{RT:UCDC}.
\item[7.] \textbf{Fully parallel sampling.} This is a DU sampling with $q_n = 1$. Also, this is a NU sampling with $l=1$ and $S^1 = \NN$. That is, at each iteration we update all blocks.
\end{itemize}



The following simple result says that the intersection between the class of DU and NU samplings is very thin.  A sampling is called \emph{vacuous} if $\Prob(\emptyset)>0$.

\begin{proposition}
There are precisely two nonvacuous samplings which are both DU and NU: i) the serial   sampling and ii) the fully parallel sampling.
\end{proposition}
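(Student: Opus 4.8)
The plan is to combine the structural characterizations of the two sampling classes and show that their intersection forces the support of the sampling to consist of all subsets of a single cardinality which is simultaneously a partition of $\NN$. First I would take an arbitrary nonvacuous sampling $\Srv$ that is both DU and NU. By the DU property, $\Srv$ is determined by the vector $q=(q_0,\dots,q_n)$ with $q_j=\Prob(|\Srv|=j)$, and $\pp(S)=q_{|S|}/{n\choose |S|}$ for all $S\subseteq\NN$; nonvacuousness gives $q_0=0$, and in particular a set $S$ lies in the support of $\Srv$ if and only if $q_{|S|}>0$.

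Next I would invoke the NU property: the support of $\Srv$ is a partition $\{S^1,\dots,S^l\}$ of $\NN$ into nonempty blocks, each of probability $\tfrac1l$; in particular any two \emph{distinct} sets in the support are disjoint. The key step is to deduce from this that exactly one $q_j$ is positive. Indeed, if $q_j>0$ and $q_{j'}>0$ with $1\le j<j'\le n$, then both $\{1,\dots,j\}$ and $\{1,\dots,j'\}$ lie in the support; they are distinct (different cardinalities) yet not disjoint (the first is a nonempty subset of the second) — contradicting the partition structure. Hence there is a single $j\in\{1,\dots,n\}$ with $q_j=1$, and the support of $\Srv$ is precisely the family of all $j$-element subsets of $\NN$.

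It then remains to determine for which $j$ this family is a partition of $\NN$. If $j=n$ the family is $\{\NN\}$, a valid partition, and $\Srv$ is the fully parallel sampling; if $j=1$ it is $\{\{1\},\dots,\{n\}\}$, again a valid partition, and $\Srv$ is the serial sampling. If instead $2\le j\le n-1$, then $\{1,\dots,j\}$ and $\{2,\dots,j+1\}$ are two distinct members of the support whose intersection $\{2,\dots,j\}$ is nonempty, so the support cannot be a partition — a contradiction, ruling this case out. Conversely, the serial and fully parallel samplings were already noted (items~6 and~7 above) to be both DU and NU, and they are distinct for $n\ge2$; so these two are exactly the nonvacuous samplings lying in both classes.

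The only real content is the ``at most one positive $q_j$'' step together with the elementary observation that a single-cardinality family of subsets of $\NN$ is a partition precisely for cardinalities $1$ and $n$; after that the argument is a one-line combinatorial check, so I do not anticipate any serious obstacle — the main thing to be careful about is not overlooking the two degenerate cardinalities where the family of all $j$-subsets happens to partition $\NN$.
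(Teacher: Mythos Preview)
Your proof is correct and follows essentially the same idea as the paper's: both exploit that DU forces every set of a given cardinality into the support once one such set is there, while NU forces any two distinct support sets to be disjoint. The only difference is organizational --- you first rule out two distinct positive cardinalities and then rule out intermediate cardinalities, whereas the paper directly argues that any support set $S$ with $1<|S|<n$ yields a same-cardinality intersecting set $S'$ also in the support, and only afterwards notes (tersely) that cardinalities $1$ and $n$ cannot coexist.
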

\begin{proof} Assume $\Srv$ is nonvacuous, NU and DU. Since $\Srv$ is nonvacuous, $\Prob(\Srv = \emptyset)=0$. Let $S\subset \NN$ be any set for which $\Prob(\Srv=S)>0$. If $1<|S|<n$, then there exists $S'\neq S$ of the same cardinality as $S$ having a nonempty intersection with $S$. Since $\Srv$ is doubly uniform, we must have $\Prob(\Srv=S') = \Prob(\Srv = S')>0$. However, this contradicts the fact that $\Srv$ is non-overlapping. Hence, $\Srv$ can only generate sets of cardinalities $1$ or $n$ with positive probability, but not both. One option leads to the fully parallel sampling, the other one leads to the serial sampling.
\end{proof}

\subsection{Technical results} \label{sec:technical}

For a given sampling $\Srv$ and $i,j \in \NN$ we let
\begin{equation}\label{eq:p_ij}p_{ij} \eqdef \Prob(i \in \Srv, j\in \Srv)  = \sum_{S: \{i,j\}\subset S} \pp(S).\end{equation}
The following simple result has several consequences  which will be used throughout the paper.

\begin{lemma}[Sum over a random index set]\label{lem:basic} Let $\emptyset \neq J\subset \NN$ and $\Srv$ be any sampling. If $\theta_i$, $i\in \NN$, and $\theta_{ij}$, for $(i,j) \in \NN\times \NN$ are real constants, then\footnote{Sum over an empty index set  will, for convenience, be defined to be zero.}
\[\Exp \left[ \sum_{i\in J \cap \Srv} \theta_i \right] = \sum_{i\in J} p_i \theta_i,\]
\begin{equation}\label{eq:0987i4}\Exp \left[ \sum_{i\in J \cap \Srv} \theta_i \;|\; |J\cap \Srv| = k\right] = \sum_{i\in J} \Prob(i \in \Srv \;|\; |J \cap \Srv| = k) \theta_i,\end{equation}
\begin{equation}\label{eq:jkajksa87}\Exp \left[ \sum_{i\in J\cap \Srv} \sum_{j \in J\cap \Srv}\theta_{ij} \right] = \sum_{i \in J} \sum_{j\in J} p_{ij} \theta_{ij}.\end{equation}
\end{lemma}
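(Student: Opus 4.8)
The plan is to prove all three identities by the same elementary device: write each sum over the random index set $J\cap\Srv$ as a sum over the fixed set $J$ (or $J\times J$) weighted by indicator random variables, and then push the expectation inside using linearity.

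\textbf{First identity.} For a fixed realization of $\Srv$, I would write
$\sum_{i\in J\cap\Srv}\theta_i = \sum_{i\in J}\mathbf{1}_{(i\in\Srv)}\,\theta_i$, where $\mathbf{1}_{(i\in\Srv)}$ is the indicator of the event $i\in\Srv$. Taking expectations and using linearity of expectation over the finite index set $J$ gives $\Exp[\sum_{i\in J\cap\Srv}\theta_i] = \sum_{i\in J}\Exp[\mathbf{1}_{(i\in\Srv)}]\,\theta_i = \sum_{i\in J}p_i\theta_i$, since $\Exp[\mathbf{1}_{(i\in\Srv)}]=\Prob(i\in\Srv)=p_i$ by definition \eqref{eq:p_i}. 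The convention that an empty sum is zero handles the realizations in which $J\cap\Srv=\emptyset$.

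\textbf{Second identity.} This is the same argument carried out conditionally on the event $\{|J\cap\Srv|=k\}$. Again $\sum_{i\in J\cap\Srv}\theta_i=\sum_{i\in J}\mathbf{1}_{(i\in\Srv)}\theta_i$ as an identity of random variables, so taking conditional expectation and using its linearity yields $\Exp[\sum_{i\in J\cap\Srv}\theta_i\mid |J\cap\Srv|=k]=\sum_{i\in J}\Prob(i\in\Srv\mid |J\cap\Srv|=k)\,\theta_i$. One should note the conditioning event has positive probability for the relevant $k$ (otherwise the statement is vacuous), so the conditional probabilities are well-defined.

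\textbf{Third identity.} Here I would expand the double sum as $\sum_{i\in J\cap\Srv}\sum_{j\in J\cap\Srv}\theta_{ij}=\sum_{i\in J}\sum_{j\in J}\mathbf{1}_{(i\in\Srv)}\mathbf{1}_{(j\in\Srv)}\theta_{ij}=\sum_{i\in J}\sum_{j\in J}\mathbf{1}_{(i\in\Srv,\,j\in\Srv)}\theta_{ij}$, since the product of the two indicators is the indicator of the intersection event. Taking expectations and using linearity over the finite set $J\times J$ gives $\sum_{i\in J}\sum_{j\in J}\Prob(i\in\Srv,j\in\Srv)\theta_{ij}=\sum_{i\in J}\sum_{j\in J}p_{ij}\theta_{ij}$ by the definition \eqref{eq:p_ij} of $p_{ij}$. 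There is no real obstacle here; the only thing to be slightly careful about is the bookkeeping for the empty-sum convention and making sure the indicator-product step (including the diagonal terms $i=j$, where $p_{ii}=p_i$) is stated cleanly. Everything is a finite sum, so no measure-theoretic subtleties arise and no interchange of limits needs justification.
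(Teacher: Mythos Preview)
Your proof is correct and is essentially the same argument as the paper's: the paper expands $\Exp[\sum_{i\in J\cap\Srv}\theta_i]=\sum_{S\subset\NN}\pp(S)\sum_{i\in J\cap S}\theta_i$ and interchanges the two finite sums, which is exactly your indicator-function manipulation written out explicitly. The only difference is notational---you package the inner interchange as linearity of expectation applied to $\mathbf{1}_{(i\in\Srv)}$, whereas the paper writes out the sum over subsets directly---so nothing further needs to be said.
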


\noindent {\em Proof.}
We prove the first statement, proof of the remaining statements is essentially identical:
\begin{align*}
\Exp \left[ \sum_{i\in J \cap \Srv} \theta_i \right] \overset{\eqref{eq:p(S)-general}}{=} \sum_{S\subset \NN} \left(\sum_{i\in J\cap S} \theta_i \right) \pp(S) = \sum_{i\in J} \sum_{S: i\in S}  \theta_i \pp(S)
= \sum_{i\in J} \theta_i \sum_{S: i\in S}  \pp(S) = \sum_{i\in J} p_i \theta_i. \quad \qed
\end{align*}



The consequences are summarized in the next theorem and the discussion that follows.

\begin{theorem}\label{thm:first_theorem} Let $\emptyset \neq J \subset \NN$ and $\Srv$ be an arbitrary sampling. Further, let $a,h\in \R^\N$, $w\in \R^n_+$ and let $g$ be a block separable function, i.e., $g(x) = \sum_i g_i(x^{(i)})$. Then
\begin{eqnarray}
\label{eq:expected_sampling_size}\Exp\left[|J\cap\Srv|\right] &=& \sum_{i\in J} p_i,\\
\label{eq:expected_sampling_size2}\Exp\left[|J\cap\Srv|^2\right] &=& \sum_{i\in J} \sum_{j \in J} p_{ij},\\
\label{eq:simple3}\Exp \left[\ve{a}{h_{[\Srv]}}_w\right] &=& \ve{a}{h}_{p\odot w},\\
\label{eq:simple4}\Exp \left[\|h_{[\Srv]}\|_w^2 \right] &= & \|h\|^2_{p \odot w},\\
\label{eq:js0as9} \Exp \left[g(x+\vsubset{h}{\Srv})\right] &=& \sum_{i=1}^n \left[p_i g_i(x^{(i)}+h^{(i)}) + (1-p_i)g_i(x^{(i)}) \right].
\end{eqnarray}
Moreover, the matrix $P \eqdef (p_{ij})$ is positive semidefinite.
\end{theorem}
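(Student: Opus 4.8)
The plan is to derive all five identities of the theorem, together with the positive semidefiniteness of $P=(p_{ij})$, as essentially immediate consequences of Lemma~\ref{lem:basic} combined with the block-by-block description \eqref{eq:lllop09jhkjh} of the restriction map $x\mapsto \vsubset{x}{\Srv}$. The unifying observation is that ``restrict to $\Srv$, then sum a separable (resp.\ bilinear, resp.\ quadratic) quantity over all $n$ blocks'' equals ``sum that quantity only over the blocks in $\Srv$'', which is precisely the shape of the left-hand sides appearing in Lemma~\ref{lem:basic}.

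First I would dispatch \eqref{eq:expected_sampling_size} and \eqref{eq:expected_sampling_size2}: since $|J\cap\Srv| = \sum_{i\in J\cap\Srv} 1$ and $|J\cap\Srv|^2 = \sum_{i\in J\cap\Srv}\sum_{j\in J\cap\Srv} 1$, these are the first (unlabeled) and the third \eqref{eq:jkajksa87} identities of Lemma~\ref{lem:basic} applied with $\theta_i=1$ and $\theta_{ij}=1$, respectively. Next, for \eqref{eq:simple3}, \eqref{eq:simple4} and \eqref{eq:js0as9} I would first rewrite the argument blockwise for a fixed realization $S$ of $\Srv$: by \eqref{eq:weighted_inner_product}, \eqref{eq:norms} and \eqref{eq:lllop09jhkjh} we have $\ve{a}{\vsubset{h}{S}}_w = \sum_{i\in S} w_i\ve{a^{(i)}}{h^{(i)}}$, $\|\vsubset{h}{S}\|_w^2 = \sum_{i\in S} w_i\|h^{(i)}\|_{(i)}^2$, and, using block separability of $g$, $g(x+\vsubset{h}{S}) = \sum_{i\notin S} g_i(x^{(i)}) + \sum_{i\in S} g_i(x^{(i)}+h^{(i)}) = \sum_{i=1}^n g_i(x^{(i)}) + \sum_{i\in S}\bigl(g_i(x^{(i)}+h^{(i)})-g_i(x^{(i)})\bigr)$. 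Taking expectations and invoking the first identity of Lemma~\ref{lem:basic} with $J=\NN$ and $\theta_i$ equal to $w_i\ve{a^{(i)}}{h^{(i)}}$, $w_i\|h^{(i)}\|_{(i)}^2$, and $g_i(x^{(i)}+h^{(i)})-g_i(x^{(i)})$ respectively, gives the three formulas; for \eqref{eq:js0as9} one then recombines the deterministic term $\sum_i g_i(x^{(i)})$ with $\sum_i p_i\bigl(g_i(x^{(i)}+h^{(i)})-g_i(x^{(i)})\bigr)$ to reach the stated form. (Note \eqref{eq:simple3} and \eqref{eq:simple4} are in fact the special cases of \eqref{eq:js0as9} with $g_i(t)=w_i\ve{a^{(i)}}{t}$ and $g_i(t)=w_i\|t\|_{(i)}^2$, so one could alternatively just prove \eqref{eq:js0as9} and specialize.)

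Finally, for semidefiniteness of $P$, the key point is to recognize $P$ as a second-moment matrix. For arbitrary $z\in\R^n$, apply \eqref{eq:jkajksa87} with $J=\NN$ and $\theta_{ij}=z_iz_j$ to obtain $z^T P z = \sum_{i,j}p_{ij}z_iz_j = \Exp\bigl[\sum_{i\in\Srv}\sum_{j\in\Srv}z_iz_j\bigr] = \Exp\bigl[(\sum_{i\in\Srv}z_i)^2\bigr]\ge 0$; equivalently $P=\Exp[\xi\xi^T]\succeq 0$, where $\xi\in\{0,1\}^n$ is the indicator vector of $\Srv$. As for the main obstacle: there is no deep difficulty here — all the substance sits in Lemma~\ref{lem:basic}, which is already available — and the only thing needing a little care is the bookkeeping in the blockwise reductions, in particular correctly peeling off the deterministic term $\sum_i g_i(x^{(i)})$ in \eqref{eq:js0as9} and checking that degenerate cases (such as $J\cap\Srv=\emptyset$, covered by the convention that empty sums vanish) cause no trouble.
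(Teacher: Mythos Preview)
Your proposal is correct and follows essentially the same approach as the paper: rewrite each quantity as a sum over $J\cap\Srv$ (or $\Srv$) and apply Lemma~\ref{lem:basic} with the appropriate $\theta_i$ or $\theta_{ij}$, then obtain positive semidefiniteness from $z^TPz=\Exp\bigl[(\sum_{i\in\Srv}z_i)^2\bigr]\geq 0$. Your additional remark that \eqref{eq:simple3} and \eqref{eq:simple4} are specializations of \eqref{eq:js0as9} is a nice observation not made explicit in the paper, but the underlying argument is the same.
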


\noindent {\em Proof.}
Noting that $|J\cap \Srv| = \sum_{i\in J\cap \Srv} 1$, $|J\cap \Srv|^2 = (\sum_{i\in J\cap \Srv} 1)^2 = \sum_{i \in J\cap \Srv}\sum_{j \in J \cap \Srv} 1$, $\ve{a}{h_{[\Srv]}}_w = \sum_{i\in \Srv} w_i \ve{a^{(i)}}{h^{(i)}}$, $\|h_{[\Srv]}\|_w^2 = \sum_{i\in \Srv} w_i \|h^{(i)}\|_{(i)}^2$ and
\[g(x+\vsubset{h}{\Srv}) = \sum_{i\in \Srv} g_i(x^{(i)}+h^{(i)}) + \sum_{i\notin \Srv} g_i(x^{(i)}) = \sum_{i\in \Srv} g_i(x^{(i)}+h^{(i)}) + \sum_{i=1}^n g_i(x^{(i)}) - \sum_{i\in \Srv} g_i(x^{(i)}),\]
all five identities  follow directly by applying Lemma~\ref{lem:basic}. Finally, for any $\theta = (\theta_1,\dots,\theta_n)^T\in \R^n$,
\[\theta^T P \theta = \sum_{i=1}^n \sum_{j=1}^n p_{ij} \theta_i \theta_j \overset{\eqref{eq:jkajksa87}}{=} \E [(\sum_{i \in \Srv} \theta_i)^2]\geq 0. \quad \qed \]

The above results hold for arbitrary samplings. Let us specialize them, in order of decreasing generality, to uniform, doubly uniform and nice samplings.

\begin{itemize}
\item \textbf{Uniform samplings.} If $\Srv$ is uniform, then from  \eqref{eq:expected_sampling_size} using $J=\NN$ we get
\begin{equation}\label{eq:simple2unif}p_i = \tfrac{\Exp \left[|\Srv|\right]}{n}, \qquad i \in \NN.\end{equation}
Plugging \eqref{eq:simple2unif} into \eqref{eq:expected_sampling_size}, \eqref{eq:simple3}, \eqref{eq:simple4} and \eqref{eq:js0as9} yields
\begin{equation}\label{eq:simple2unifXXX}\Exp \left[|J\cap\Srv|\right] = \tfrac{|J|}{n}\E[|\Srv|],\end{equation}
\begin{equation}\label{eq:simple3unif}\Exp \left[\ve{a}{h_{[\Srv]}}_w\right] =  \tfrac{\Exp\left[|\Srv|\right]}{n} \ve{a}{h}_w,\end{equation}
\begin{equation}\label{eq:simple4unif}\E \left[\|h_{[\Srv]}\|_w^2 \right] =   \tfrac{\Exp\left[|\Srv|\right]}{n}  \|h\|^2_{w},\end{equation}
\begin{equation}\label{eq:separable_uniform}\Exp \left[g(x+\vsubset{h}{\Srv})\right] = \tfrac{\Exp[|\Srv|]}{n} g(x+h) + \left(1-\tfrac{\Exp [|\Srv|]}{n}\right)g(x).
\end{equation}

\item \textbf{Doubly uniform samplings.} Consider the case $n>1$; the case $n=1$ is trivial. For doubly uniform $\Srv$, $p_{ij}$ is constant for $i\neq j$:
\begin{equation}\label{eq:p_ij_equals}p_{ij} = \tfrac{\E[|\Srv|^2-|\Srv|]}{n(n-1)}. \end{equation}
Indeed, this follows from
\[p_{ij} = \sum_{k=1}^n \Prob(\{i,j\}\subseteq \Srv \;|\; |\Srv| = k)\Prob(|\Srv|=k) = \sum_{k=1}^n \tfrac{k(k-1)}{n(n-1)}\Prob(|\Srv|=k).\]
Substituting \eqref{eq:p_ij_equals} and \eqref{eq:simple2unif} into \eqref{eq:expected_sampling_size2} then gives
\begin{equation}\label{eq:98sjs8}\E[|J \cap \Srv|^2] = (|J|^2 - |J|)\tfrac{\E[|\Srv|^2-|\Srv|]}{n\max\{1,n-1\}} + |J|\tfrac{|\Srv|}{n}.\end{equation}

\item \textbf{Nice samplings.} Finally, if $\Srv$ is $\tau$-nice (and $\tau \neq 0$), then $\E[|\Srv|]=\tau$ and $\E[|\Srv|^2] = \tau^2$, which used in \eqref{eq:98sjs8} gives
\begin{equation}\label{eq:00sjs738}\E[|J \cap \Srv|^2] =  \tfrac{|J|\tau}{n}\left(1+ \tfrac{(|J| - 1)(\tau-1)}{\max\{1,n-1\}}\right).\end{equation}
Moreover, assume that $\Prob(|J \cap \Srv|=k) \neq 0$ (this happens precisely when $0\leq k \leq |J|$ and $k \leq \tau \leq n-|J|+k$). Then for all $i \in J$,
\[
\Prob(i \in \Srv \;|\; |J\cap \Srv| = k) =  \frac{{|J| -1 \choose k-1}{n-|J| \choose \tau - k}}{{|J| \choose k}{n-|J| \choose \tau-k}} =
\frac{k}{|J|}.
\]
Substituting this into \eqref{eq:0987i4} yields
\begin{equation}\label{eq:JEJEJEJE}
\E\left[\sum_{i \in J \cap \Srv} \theta_i \;|\; |J\cap \Srv| = k \right] = \tfrac{k}{|J|}\sum_{i\in J} \theta_i.
\end{equation}

\end{itemize}


\section{Expected Separable Overapproximation} \label{SEC:SO}


Recall that given $x_k$, in PCDM1 the next iterate is the random vector $x_{k+1} = x_k + h_{[\Srv]}$ for a particular choice of $h \in \R^\N$. Further recall that in PCDM2,
\[x_{k+1} = \begin{cases}x_k+h_{[\Srv]}, & \text{if } F(x_k+h_{[\Srv]})\leq F(x_k),\\ x_k, & \text{otherwise,}\end{cases}\]
again for a particular choice of $h$. While in Section~\ref{SEC:PCDM} we mentioned  how $h$ is computed, i.e., that $h$ is the minimizer of $H_{\beta,w}(x,\cdot)$ (see \eqref{eq:h(x)} and \eqref{eq:H_{beta,w}}), we did not explain \emph{why} is $h$ computed this way. The reason for this is that the tools needed for this were not yet developed at that point (as we will see, some results from Section~\ref{SEC:Block_Samplings} are needed). In this section we give an answer to this \emph{why} question.

Given $x_k\in \R^\N$, after one step of PCDM1 performed with update $h$ we get $\E[F(x_{k+1})\;|\; x_k]  = \E[F(x_k+h_{[\Srv]})\;|\; x_k]$.
On the the other hand, after one step of PCDM2  we have
\[\E[F(x_{k+1})\;|\; x_k] = \E[\min\{F(x_k+h_{[\Srv]}),F(x_k)\}\;|\; x_k] \leq \min\{\E[F(x_k+h_{[\Srv]})\;|\; x_k],F(x_k)\}.\]
So, for both PCDM1 and PCDM2 the following estimate holds,
\begin{equation}\label{eq:ineq888}\E[F(x_{k+1})\;|\; x_k]  \leq \E[F(x_k+h_{[\Srv]})\;|\; x_k].\end{equation}
A good choice for $h$ to be used in the algorithms would be one minimizing the right hand side of inequality \eqref{eq:ineq888}. At the same time, we would like the minimization process to be decomposable so that the updates $h^{(i)}$, $i \in \Srv$, could be computed in parallel. However, the problem of finding such $h$ is intractable in general  even if we do not require parallelizability. Instead, we propose to construct/compute a ``simple'' separable overapproximation of the right-hand side of \eqref{eq:ineq888}. Since the overapproximation will be separable, parallelizability is guaranteed; ``simplicity'' means that the updates $h^{(i)}$ can be computed easily (e.g., in closed form).

From now on we replace, for simplicity and w.l.o.g., the random vector $x_k$ by a fixed deterministic vector $x\in \R^\N$. We can thus remove conditioning in \eqref{eq:ineq888} and instead study the quantity $\E[F(x+h_{[\Srv]})]$. Further, fix $h \in \R^\N$. Note that if  we can find $\beta>0$ and $w\in \R^n_{++}$ such that
\begin{eqnarray}\label{eq:general_form_for_expectation}
 \Exp\left[f\left(x+\vsubset{h}{\Srv}\right)\right]
&\leq&
  f(x)+ \tfrac{\E[|\Srv|]}{n} \left( \ve{\nabla f(x)}{h}  + \tfrac{\beta}{2}\|h\|_w^2 \right),
 \end{eqnarray}
we indeed find a simple separable overapproximation of $\E[F(x+h_{[\Srv]})]$:
\begin{eqnarray}
 \Exp[F(x+\vsubset{h}{\Srv})]
 &\stackrel{\eqref{eq:P}}{=}& \Exp [f(x+\vsubset{h}{\Srv})+\cPsi(x+\vsubset{h}{\Srv})] \notag \\
 &\overset{\eqref{eq:general_form_for_expectation}, \eqref{eq:separable_uniform}}{\leq}& f(x)+
\tfrac{\E[|\Srv|]}{n}\left( \ve{ \nabla f(x)}{h} + \tfrac{\beta}{2} \|h\|_w^2\right) + \left(1-\tfrac{\E[|\Srv|]}{n}\right)\cPsi(x) + \tfrac{\E[|\Srv|]}{n} \cPsi(x+h) \notag\\
& =&  \left(1-\tfrac{\E[|\Srv|]}{n}\right)F(x) + \tfrac{\E[|\Srv|]}{n} H_{\beta,w}(x,h), \label{eq:1}
\end{eqnarray}
where we recall from \eqref{eq:H_{beta,w}} that
$H_{\beta,w}(x,h) =  f(x)+ \ve{\nabla f(x)}{h} +\tfrac{\beta}{2}      \|h\|_w^2 +     \cPsi(x+h)$.

That is, \eqref{eq:1} says that the expected objective value after one parallel step of our methods, if block $i\in \Srv$ is updated by $h^{(i)}$, is bounded above by a convex combination of $F(x)$ and  $H_{\beta,w}(x,h)$. The natural choice of $h$ is to set
\begin{equation}\label{eq:ggg67} h(x) = \arg \min_{h\in \R^\N} H_{\beta,w}(x,h).\end{equation} Note that this is precisely the choice we make in our methods. Since $H_{\beta,w}(x,0) = F(x)$, both PCDM1 and PCDM2 are monotonic in expectation.

The above discussion leads to the following definition.

\begin{definition}[Expected Separable Overapproximation (ESO)] Let $\beta > 0$, $w\in \R^n_{++}$ and let $\Srv$ be a proper uniform sampling. We say that $f:\R^\N\to \R$ admits a $(\beta,w)$-ESO with respect to $\Srv$ if  inequality \eqref{eq:general_form_for_expectation} holds for all $x,h\in \R^\N$.
For simplicity, we  write $(f,\Srv) \sim ESO(\beta,w)$.
\end{definition}

A few remarks:
\begin{enumerate}
\item \textbf{Inflation.} If $(f,\Srv) \sim ESO(\beta , w)$, then for $\beta' \geq \beta$ and $w'\geq w$, $(f,\Srv)\sim ESO(\beta',w')$.
\item \textbf{Reshuffling.}  Since for any $c>0$ we have $\|h\|_{c w}^2 = c\|h\|_{w}^2$, one can ``shuffle'' constants between $\beta$ and $w$ as follows:
\begin{equation}\label{eq:ESO_shift} (f,\Srv)\sim ESO(c \beta,w) \quad \Leftrightarrow \quad (f,\Srv)\sim ESO(\beta,c w), \qquad  c > 0.\end{equation}
\item \textbf{Strong convexity.} If $(f,\Srv) \sim ESO(\beta , w)$, then
\begin{equation}\label{eq:beta-mu}\beta \geq \mu_f(w).\end{equation}
Indeed, it suffices to take expectation in \eqref{eq:strong_def} with $y$ replaced by $x+h_{[\Srv]}$ and compare the resulting inequality with  \eqref{eq:general_form_for_expectation} (this gives $\beta\|h\|_w^2 \geq \mu_f(w)\|h\|_w^2$, which must hold for all $h$).
\end{enumerate}

Recall that Step 5 of PCDM2 was introduced so as to explicitly enforce monotonicity into the method as in some situations, as we will see in Section~\ref{SEC:Iteration_Complexity}, we can only analyze a monotonic algorithm. However, sometimes even PCDM1 behaves monotonically (without enforcing this behavior externally as in PCDM2). The following definition captures this.

\begin{definition}[Monotonic ESO] Assume $(f,\Srv) \sim ESO(\beta,w)$ and let $h(x)$ be as in \eqref{eq:ggg67}. We say that the ESO is \emph{monotonic} if  $F(x+(h(x))_{[\Srv]}) \leq F(x)$, with probability 1, for all $x \in \dom F$.
\end{definition}

\subsection{Deterministic Separable Overapproximation (DSO) of Partially Separable Functions} \label{subsec:DSO}

The following theorem will be useful in deriving  ESO  for  uniform samplings (Section~\ref{sec:uniform}) and  nonoverlapping uniform samplings (Section~\ref{sec:NOuniform}). It will also be useful in establishing monotonicity of some ESOs (Theorems~\ref{thm:proper_uniform_ESO} and \ref{thm:ESO-nonoverlapping}).

\begin{theorem}[DSO]\label{thm:globalLipConstant} Assume $f$ is partially separable (i.e., it can be written in the form \eqref{eq:strucutre_of_f}).
Letting $\support(h)\eqdef \{i \in \NN\;:\; h^{(i)}\neq 0\}$, for all $x, h\in \R^\N$ we have
\begin{equation}\label{eq:upperBoundOnLipConst}
 f(x+h) \leq f(x)+\ve{\nabla f(x)}{ h} +  \frac{\max_{J\in\JJ} |J\cap \support(h)|}2 \|h\|_L^2.
\end{equation}
\end{theorem}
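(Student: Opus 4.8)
The plan is to apply the standard Lipschitz descent inequality \eqref{eq:Lipschitz_ineq} not to $f$ as a whole, but to each piece $f_J$ in the partially separable decomposition \eqref{eq:strucutre_of_f}, and then reassemble. The key observation is that $f_J$ depends only on the blocks indexed by $J$, so when we perturb $x$ by $h$, only the components of $h$ supported on $J$ matter; effectively $f_J$ only ``sees'' $h_{[J\cap\support(h)]}$. So first I would note that for a single block update $x\mapsto x+U_i t$, inequality \eqref{eq:Lipschitz_ineq} gives the per-block bound with constant $L_i$; more generally, for a perturbation $h$ supported on a set $T$, applying \eqref{eq:Lipschitz_ineq} coordinate-by-coordinate along the blocks in $T$ (telescoping) yields $f(x+h)\le f(x)+\ve{\nabla f(x)}{h}+\tfrac{|T|}{2}\|h\|_L^2$ — but that overcounts; the sharper route is to do the telescoping argument directly on each $f_J$.

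Concretely, fix $J\in\JJ$ and let $T_J \eqdef J\cap\support(h)$. Since $f_J$ depends only on blocks in $J$, we have $f_J(x+h)=f_J(x+h_{[T_J]})$ and $\nabla_i f_J(x)=0$ for $i\notin J$. Order the blocks of $T_J$ arbitrarily and build up the update one block at a time, applying \eqref{eq:Lipschitz_ineq} at each step with the block Lipschitz constant $L_i$ of $f_J$ (which is bounded by the block Lipschitz constant $L_i$ of $f$ itself, since $f_J$ is one summand among nonnegative-curvature pieces — more carefully, this is valid because the decomposition gives $L_i = \sum_{J\ni i} L_i^{(J)}$-type relationships; the cleanest statement is simply that each $f_J$ has block Lipschitz constants no larger than those of $f$). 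This telescoping gives
\[
f_J(x+h) \le f_J(x) + \ve{\nabla f_J(x)}{h} + \tfrac{1}{2}\sum_{i\in T_J} L_i \|h^{(i)}\|_{(i)}^2.
\]
Then I would bound $\sum_{i\in T_J} L_i\|h^{(i)}\|_{(i)}^2 \le \|h\|_L^2$ is too weak; instead keep it as is and sum over $J$: summing over $J\in\JJ$, the left side gives $f(x+h)$, the linear terms give $\ve{\nabla f(x)}{h}$, and the quadratic terms give $\tfrac12\sum_{J\in\JJ}\sum_{i\in J\cap\support(h)} L_i\|h^{(i)}\|_{(i)}^2$. Each index $i$ appears in this double sum once for every $J\in\JJ$ with $i\in J$ and $h^{(i)}\ne 0$; crucially $|\{J: i\in J\}|$ need not be controlled directly, but the total is bounded by $\max_{J\in\JJ}|J\cap\support(h)| \cdot \tfrac12\|h\|_L^2$ — wait, that requires a counting argument. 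The right way: $\sum_{J}\sum_{i\in J\cap\support(h)} L_i\|h^{(i)}\|_{(i)}^2 = \sum_{i\in\support(h)} L_i\|h^{(i)}\|_{(i)}^2 \cdot |\{J\in\JJ: i\in J\}|$. This is not obviously $\le \max_J|J\cap\support(h)|\cdot\|h\|_L^2$. So I need to be more careful and use a different weighting of the per-$J$ inequalities.

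The main obstacle, then, is exactly this counting: a naive sum over $J$ overcounts each block $i$ by its multiplicity in $\JJ$, which is not what appears in \eqref{eq:upperBoundOnLipConst}. The resolution I expect the authors use — and what I would do — is to exploit the freedom in how $f$ is split: rather than summing the trivial per-$J$ bounds, one uses that the global Lipschitz property of $f$ along a \emph{fixed} support set $T$ can be established by a single telescoping argument over the blocks of $T$, where at the step updating block $i$ the relevant second-order term involves only those $f_J$ with $i\in J$, and one bounds the number of such active pieces touching the already-updated blocks by $\max_J|J\cap T|$. Equivalently: restrict attention to $h$ with $\support(h)=T$, write $f(x+h)-f(x)-\ve{\nabla f(x)}{h}$ via a Taylor/path-integral argument, and bound the Hessian-type term block-by-block, absorbing the overlap structure into the factor $\max_J|J\cap T|$. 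I would set $T=\support(h)$, reduce to this case (the inequality only involves $h$ through its support anyway), and carry out the telescoping with the overlap bound inserted at each block step; then set $L=(L_1,\dots,L_n)$ and recognize $\|h\|_L^2=\sum_i L_i\|h^{(i)}\|_{(i)}^2$ to finish.
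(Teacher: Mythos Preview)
Your proposal correctly identifies the central difficulty --- that summing naive per-$J$ bounds overcounts each block $i$ by the number of pieces $J$ containing it, which is unrelated to $\max_J |J\cap\support(h)|$ --- but you do not resolve it. The intermediate inequality you write down,
\[
f_J(x+h) \le f_J(x) + \ve{\nabla f_J(x)}{h} + \tfrac{1}{2}\sum_{i\in T_J} L_i \|h^{(i)}\|_{(i)}^2,
\]
is actually \emph{false}: take $n=2$, $f=f_J$ with $J=\{1,2\}$, $f_J(x_1,x_2)=(x_1+x_2)^2$, $x=0$, $h=(1,1)$. Then $L_1=L_2=2$, the left side is $4$, and your right side is $\tfrac12(2+2)=2$. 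Telescoping block-by-block does not give this bound, because the gradients appearing in the telescoped increments are evaluated at shifted points, not at $x$; you cannot simply replace them by $\nabla f_J(x)$ without incurring extra cross terms. Your suggested fix at the end (a ``single telescoping argument over $T$'' with an overlap bound) is too vague to be a proof, and the quantity $\max_J|J\cap T|$ is not a count of pieces through a block, so it is not clear what would be bounded by it in that argument.

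The paper's proof uses a different and rather clean idea that you are missing: work with the convex remainder $\phi(h)\eqdef f(x+h)-f(x)-\ve{\nabla f(x)}{h}$ and its pieces $\phi^J$, set $\theta=\max_J|J\cap\support(h)|$, and for each $J$ write
\[
\sum_{i\in J\cap\support(h)} U_i h^{(i)} \;=\; \tfrac{\theta-\theta^J}{\theta}\cdot 0 \;+\; \sum_{i\in J\cap\support(h)} \tfrac{1}{\theta}\cdot \theta\, U_i h^{(i)},
\]
a convex combination. Applying Jensen (convexity of $\phi^J$) and $\phi^J(0)=0$ gives $\phi^J(h)\le \tfrac{1}{\theta}\sum_{i\in J\cap\support(h)}\phi^J(\theta U_i h^{(i)})$. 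Now sum over $J$, extend the inner sum to all $i$ (harmless since $\phi^J$ ignores blocks outside $J$), swap sums, and use $\sum_J \phi^J = \phi$ together with the \emph{global} single-block bound $\phi(\theta U_i h^{(i)})\le \tfrac{L_i}{2}\|\theta h^{(i)}\|_{(i)}^2$ from \eqref{eq:Lipschitz_ineq}. The factor $\theta$ pops out exactly. Note that this never requires block Lipschitz constants for the individual $f_J$, sidestepping the issue you tried to hand-wave away.
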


\noindent {\em Proof.}
Let us fix $x$  and define $\phi(h)\eqdef f(x+h) - f(x)- \ve{\nabla f(x)}{h}$.
Fixing $h$, we need to show that
$\phi(h) \leq \frac{\theta}{2} \|h\|_L^2$ for $\theta = \max_{J\in \JJ} \theta^J$, where
$\theta^J \eqdef |J\cap \support(h)|$. One can define functions $\phi^J$ in an analogous fashion from the constituent functions $f_J$, which satisfy
\begin{equation}\label{eq:phi-sum}\phi(h) = \sum_{J\in \JJ} \phi^J (h),\end{equation}
\begin{equation}\label{eq:987987} \phi^J(0) = 0, \qquad J \in \JJ.
\end{equation}
Note that \eqref{eq:Lipschitz_ineq} can be written as
\begin{equation}\label{eq:987980980} \phi(\U_i h^{(i)}) \leq \tfrac{\Lip_i}{2}\|h^{(i)}\|_{(i)}^2, \qquad i=1,2,\dots,n.
\end{equation}
Now, since $\phi^J$ depends on the intersection of $J$ and the support of its argument only, we have
\begin{equation}\label{eq:090909} \phi(h) \stackrel{\eqref{eq:phi-sum}}{=} \sum_{J\in \JJ} \phi^J(h) =  \sum_{J\in \JJ} \phi^J\left(\sum_{i=1}^n \U_i h^{(i)}\right)
= \sum_{J\in \JJ} \phi^J\left( \sum_{i\in J\cap \support(h)} \U_i\vc{h}{i}\right).\end{equation}

The argument in the last expression can be written as a convex combination of $1+\theta^J$ vectors: the zero vector (with weight $\tfrac{\theta-\theta^J}{\theta}$) and the $\theta^J$ vectors $\{\theta\U_i \vc{h}{i}: i\in J\cap \support(h)\}$ (with weights $\tfrac{1}{\theta}$):
\begin{equation}\label{eq:convex_comb}\sum_{i\in J\cap \support(h)} \U_i \vc{h}{i} = \left(\tfrac{\theta-\theta^J}{\theta} \times 0 \right) + \left(\tfrac{1}{\theta} \times \sum_{i\in J\cap \support(h)} \theta \U_i\vc{h}{i}\right).\end{equation}

Finally, we plug \eqref{eq:convex_comb} into \eqref{eq:090909} and use convexity and some simple algebra:
\begin{eqnarray*}
 \phi(h)&\leq&  \sum_{J\in \JJ} \left[\tfrac{\theta-\theta^J}{\theta} \phi^J(0) + \tfrac{1}{\theta}\sum_{i\in J\cap \support(h)} \phi^J(\theta\U_i\vc{h}{i})\right] \stackrel{\eqref{eq:987987}}{=} \tfrac{1}{\theta}  \sum_{J\in \JJ}  \sum_{i\in J\cap \support(h)} \phi^J(\theta\U_i\vc{h}{i}) \\
 &=& \tfrac{1}{\theta}  \sum_{J\in \JJ}  \sum_{i=1}^n \phi^J(\theta\U_i\vc{h}{i}) = \tfrac{1}{\theta} \sum_{i=1}^n \sum_{J\in \JJ}  \phi^J(\theta\U_i\vc{h}{i}) \stackrel{\eqref{eq:phi-sum}}{=} \tfrac{1}{\theta} \sum_{i=1}^n \phi(\theta\U_i\vc{h}{i}) \\
 &\stackrel{\eqref{eq:987980980}}{\leq}& \tfrac{1}{\theta} \sum_{i=1}^n  \tfrac{\Lip_i}{2}  \ncs{\theta\vc{h}{i}}{i} =  \tfrac{\theta}{2}\|h\|_L^2. \qquad \qed
\end{eqnarray*}

Besides the usefulness of the above result in deriving ESO inequalities, it is interesting on its own for the following reasons.

\begin{enumerate}
\item \textbf{Block Lipschitz continuity of $\nabla f$.} The DSO inequality \eqref{eq:upperBoundOnLipConst} is a generalization of \eqref{eq:Lipschitz_ineq} since \eqref{eq:Lipschitz_ineq} can be recovered from \eqref{eq:upperBoundOnLipConst} by choosing $h$ with $\support(h)=\{i\}$ for $i\in \NN$.
\item \textbf{Global Lipschitz continuity of $\nabla f$.} The DSO inequality also says that the gradient of $f$ is Lipschitz with Lipschitz constant $\omega$ with respect to the norm $\|\cdot\|_L$:
    \begin{equation}\label{eq:BigLipschitz}f(x+h) \leq f(x) + \ve{\nabla f(x)}{h} + \tfrac{\omega}{2}\|h\|_L^2.\end{equation}
    Indeed, this follows from \eqref{eq:upperBoundOnLipConst} via $\max_{J\in \JJ} |J\cap \support(h)| \leq \max_{J\in \JJ} |J| = \omega$.
     For $\omega=n$ this has been shown in \cite{Nesterov:2010RCDM}; our result for partially separable functions appears to be new.
\item \textbf{Tightness of the global Lipschitz constant.} The Lipschitz constant $\omega$ is ``tight'' in the following sense: there are functions for which $\omega$ cannot be replaced in \eqref{eq:BigLipschitz} by any smaller number. We will show this on a simple example. Let $f(x)=\tfrac{1}{2}\|Ax\|^2$ with $A\in \R^{m\times n}$ (blocks are of size 1). Note that we can write $f(x+h) = f(x) + \ve{\nabla f(x)}{h} + \tfrac{1}{2}h^T A^T A h$,     and that $L=(L_1,\dots,L_n)=\diag(A^TA)$. Let $D=\Diag(L)$. We need to argue that there exists $A$ for which $\sigma \eqdef \max_{h\neq 0} \tfrac{h^T A^T A h}{\|h\|_L^2} = \omega$. Since we know that $\sigma\leq \omega$ (otherwise \eqref{eq:BigLipschitz} would not hold), all we need to show is that there is $A$ and $h$ for which
    \begin{equation}\label{eq:tightness007}h^T A^T A h = \omega h^T D h.\end{equation}

    Since $f(x) = \sum_{i=1}^m (A_j^Tx)^2$, where $A_j$ is the $j$-th row of $A$, we assume that each row of $A$ has at most $\omega$ nonzeros (i.e., $f$ is partially separable of degree $\omega$). Let us pick $A$ with the following further properties: a) $A$ is a 0-1 matrix, b) all rows of $A$ have exactly $\omega$ ones, c) all columns of $A$ have exactly the same number ($k$) of ones.
    Immediate consequences: $L_i = k$ for all $i$, $D = k I_n$ and $\omega m = kn$. If we let $e_m$ be the $m\times 1$ vector of all ones and $e_n$ be the $n\times 1$ vector of all ones, and set $h = k^{-1/2}e_n$, then
    \[h^T A^T A h = \tfrac{1}{k} e_n^T A^T A e_n = \tfrac{1}{k} (\omega e_m)^T (\omega e_m)  =\tfrac{\omega^2 m}{k} = \omega n = \omega \tfrac{1}{k}e_n^T k I_n e_n = \omega h^T D h,\]
    establishing \eqref{eq:tightness007}. Using similar techniques one can easily prove the following more general result: Tightness also occurs for matrices $A$ which in each row contain $\omega$ identical nonzero elements (but which can vary from row to row).

\end{enumerate}

\subsection{ESO for a convex combination of samplings}

Let $\hat{S}_1, \hat{S}_2, \dots, \hat{S}_m$ be a collection of samplings and let $q\in \R^m$ be a probability vector. By $\sum_j q_j \Srv_j$ we denote the sampling $\Srv$ given by
\begin{equation}\label{eq:convx_comb_2_samplings}\Prob\left(\Srv =S\right) = \sum_{j=1}^m q_j \Prob(\Srv_j = S).\end{equation}
This procedure allows us to build new samplings from existing ones. A natural interpretation of $\Srv$ is that it arises from a two stage process as follows. Generating a set via $\Srv$ is equivalent to first choosing $j$ with probability $q_j$, and then generating a set via $\Srv_j$.

\begin{lemma}\label{lem:double_stuff} Let $\hat{S}_1, \hat{S}_2, \dots, \hat{S}_m$ be arbitrary samplings, $q\in \R^m$ a probability vector and $\kappa: 2^\NN \to \R$ any function mapping subsets of $\NN$ to reals. If we let $\Srv = \sum_{j} q_j \Srv_j$, then
\begin{enumerate}
 \item[(i)]  $\E[\kappa(\Srv)] = \sum_{j=1}^m q_j \E[\kappa(\Srv_j)]$,
 \item[(ii)] $\E[|\Srv|] = \sum_{j=1}^m q_j \E[|\Srv_j|] $,
 \item[(iii)] $\Prob(i \in \Srv) = \sum_{j=1}^m q_j \Prob(i\in \Srv_j)$, for any $i=1,2,\dots,n$,
 \item[(iv)] If $\Srv_1,\dots,\Srv_m$ are uniform (resp.\ doubly uniform), so is $\Srv$.
\end{enumerate}
\end{lemma}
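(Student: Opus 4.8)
The plan is to prove the four items in order, since each is essentially an unwinding of the definition \eqref{eq:convx_comb_2_samplings} together with linearity of expectation. For (i), I would compute $\E[\kappa(\Srv)] = \sum_{S\subseteq\NN} \kappa(S)\,\Prob(\Srv = S)$, substitute $\Prob(\Srv = S) = \sum_{j=1}^m q_j \Prob(\Srv_j = S)$ from the definition, and then swap the two finite sums to obtain $\sum_{j=1}^m q_j \sum_{S\subseteq\NN} \kappa(S)\,\Prob(\Srv_j = S) = \sum_{j=1}^m q_j \E[\kappa(\Srv_j)]$. This is the workhorse computation; the rest are corollaries.

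For (ii), I would simply apply (i) with $\kappa(S) = |S|$. For (iii), I would apply (i) with $\kappa(S) = \mathbf{1}_{[i\in S]}$, the indicator that $i\in S$, so that $\E[\kappa(\Srv)] = \Prob(i\in\Srv)$ and $\E[\kappa(\Srv_j)] = \Prob(i\in\Srv_j)$, giving the claim. So items (i)--(iii) reduce to one lemma and two specializations.

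For (iv), suppose first that each $\Srv_j$ is uniform, i.e., $\Prob(i\in\Srv_j)$ is independent of $i$ for each fixed $j$. Then by (iii), $\Prob(i\in\Srv) = \sum_j q_j \Prob(i\in\Srv_j)$ is a sum of quantities each independent of $i$, hence independent of $i$; so $\Srv$ is uniform. For the doubly uniform case, suppose each $\Srv_j$ is doubly uniform, i.e., $\Prob(\Srv_j = S') = \Prob(\Srv_j = S'')$ whenever $|S'| = |S''|$. Then for $|S'| = |S''|$ we get $\Prob(\Srv = S') = \sum_j q_j \Prob(\Srv_j = S') = \sum_j q_j \Prob(\Srv_j = S'') = \Prob(\Srv = S'')$ directly from \eqref{eq:convx_comb_2_samplings}, so $\Srv$ is doubly uniform.

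I do not anticipate any real obstacle here; the only mild subtlety is item (iv), where one must be careful that ``uniform'' is a per-$i$ statement (the probability $p_i$ equals a common value across $i$, but that common value can still differ between the $\Srv_j$'s), so the interchange of summation in (iii) is exactly what makes the argument go through. Everything else is bookkeeping with finite sums, and no convergence or measurability issues arise because all sample spaces are finite ($2^\NN$).
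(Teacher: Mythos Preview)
Your proposal is correct and follows essentially the same approach as the paper: prove (i) by expanding $\E[\kappa(\Srv)]$ as a sum over $S$, substituting \eqref{eq:convx_comb_2_samplings}, and swapping the finite sums; derive (ii) and (iii) by specializing $\kappa$ to $|S|$ and to the indicator $\mathbf{1}_{[i\in S]}$; and obtain (iv) from (iii) for the uniform case and directly from \eqref{eq:convx_comb_2_samplings} for the doubly uniform case. The only cosmetic difference is that for the doubly uniform part the paper writes $\Prob(\Srv_j=S)=\Prob(|\Srv_j|=\tau)/\binom{n}{\tau}$ explicitly before summing, whereas you compare $\Prob(\Srv=S')$ and $\Prob(\Srv=S'')$ term by term; both are the same argument.
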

\begin{proof} Statement (i) follows by writing $\E [\kappa(\Srv) ]$ as
\[
\sum_{S} \Prob (\Srv = S)\kappa(S)
\overset{\eqref{eq:convx_comb_2_samplings}}{=}  \sum_{S}  \sum_{j=1}^m q_j \Prob(\Srv_j = S)\kappa(S) =  \sum_{j=1}^m q_j \sum_{S}   \Prob(\Srv_j = S)\kappa(S) = \sum_{j=1}^m q_j \E[\kappa(\Srv_j)].
\]
Statement (ii) follows from (i) by choosing $\kappa(S) = |S|$, and (iii) follows from (i) by choosing $\kappa$ as follows: $\kappa(S) = 1$ if $i\in S$ and $\kappa(S)=0$ otherwise. Finally, if the samplings $\Srv_j$ are uniform, from \eqref{eq:simple2unif}  we know that $\Prob(i \in \Srv_j) = \E[|\Srv_j|]/n$ for all $i$ and $j$. Plugging this into identity (iii) shows that $\Prob(i \in \Srv)$ is independent of $i$, which shows that $\Srv$ is uniform. Now assume that $\Srv_j$ are doubly uniform. Fixing arbitrary $\tau \in \{0\} \cup \NN$, for every $S \subset \NN$ such that $|S|=\tau$ we have \[\Prob(\Srv = S)
\overset{\eqref{eq:convx_comb_2_samplings}}{=} \sum_{j=1}^m q_j \Prob(\Srv_j = S) =  \sum_{j=1}^m q_j \frac{\Prob(|\Srv_j|=\tau)}{{n \choose \tau}}.\]
As the last expression depends on $S$  via $|S|$ only, $\Srv$ is doubly uniform.
\end{proof}

Remarks:
\begin{enumerate}
\item If we fix $S\subset \NN$ and define $k(S') = 1$ if $S'=S$ and $k(S')=0$ otherwise, then statement (i) of Lemma\ref{lem:double_stuff} reduces to \eqref{eq:convx_comb_2_samplings}.
\item All samplings arise as a combination of \emph{elementary} samplings, i.e., samplings whose all weight is on one set only. Indeed, let $\Srv$ be an arbitrary sampling. For all subsets $S_j$ of $\NN$ define  $\Srv_j$  by $\Prob(\Srv_j = S_j) = 1$ and let $q_j = \Prob(\Srv = S_j)$. Then clearly, $\Srv = \sum_j q_j \Srv_j$.
\item All doubly uniform samplings arise as convex combinations of nice samplings.
\end{enumerate}

Often it is easier to establish ESO for a simple class of samplings (e.g., nice samplings) and then use it to obtain an ESO for a more complicated class (e.g., doubly uniform samplings as they arise as convex combinations of nice samplings). The following result is helpful in this regard.

\begin{theorem}[Convex Combination of Uniform Samplings]\label{thm:convex_comb_samplings} Let $\Srv_1,\dots, \Srv_m$ be uniform samplings satisfying $(f,\Srv_j)\sim ESO(\beta_j,w_j)$ and  let $q \in \R^m$ be a probability vector. If $\sum_j q_j \Srv_j$ is not nil, then
\[\left(f, \sum_{j=1}^m q_j \Srv_j\right) \sim ESO\left(\frac{1}{\sum_{j=1}^m q_j \E[|\Srv_j|]},\sum_{j=1}^m  q_j \E[|\Srv_j|]\beta_j w_j\right).\]
\end{theorem}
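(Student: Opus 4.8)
The plan is to deduce the ESO inequality \eqref{eq:general_form_for_expectation} for $\Srv=\sum_j q_j\Srv_j$ directly from the ESO inequalities for the individual samplings $\Srv_j$, using the two–stage description of $\Srv$ captured by Lemma~\ref{lem:double_stuff}. First I would check that the statement is well posed: by Lemma~\ref{lem:double_stuff}(iv), $\Srv$ is uniform, and since it is assumed not nil and a uniform sampling is proper iff it is not nil, $\Srv$ is a proper uniform sampling, so the ESO definition applies to it. Moreover $\sum_j q_j\E[|\Srv_j|]=\E[|\Srv|]>0$ by Lemma~\ref{lem:double_stuff}(ii) (again because $\Srv$ is not nil), so $\beta\eqdef 1/\sum_j q_j\E[|\Srv_j|]>0$ is well defined, and picking $j_0$ with $q_{j_0}\E[|\Srv_{j_0}|]>0$ shows $w\eqdef\sum_j q_j\E[|\Srv_j|]\beta_j w_j\in\R^n_{++}$, since each $w_j\in\R^n_{++}$ and $\beta_j>0$.

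The core computation is short. Fix $x,h\in\R^\N$ and apply Lemma~\ref{lem:double_stuff}(i) with $\kappa(S)=f(x+h_{[S]})$ to get $\E[f(x+h_{[\Srv]})]=\sum_{j=1}^m q_j\,\E[f(x+h_{[\Srv_j]})]$. Bounding each term via $(f,\Srv_j)\sim ESO(\beta_j,w_j)$ and summing with weights $q_j$, the constant terms collapse to $f(x)$ because $\sum_j q_j=1$; the linear terms give $\tfrac1n\big(\sum_j q_j\E[|\Srv_j|]\big)\ve{\nabla f(x)}{h}=\tfrac{\E[|\Srv|]}{n}\ve{\nabla f(x)}{h}$ by Lemma~\ref{lem:double_stuff}(ii); and the quadratic terms give $\tfrac{1}{2n}\sum_j q_j\E[|\Srv_j|]\beta_j\|h\|_{w_j}^2$. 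Expanding $\|h\|_{w_j}^2=\sum_{i=1}^n (w_j)_i\|h^{(i)}\|_{(i)}^2$ and interchanging the sums over $i$ and $j$ rewrites this last quantity as $\tfrac{1}{2n}\sum_i\big(\sum_j q_j\E[|\Srv_j|]\beta_j(w_j)_i\big)\|h^{(i)}\|_{(i)}^2=\tfrac{1}{2n}\|h\|_w^2$, with $w$ exactly as defined. Hence $\E[f(x+h_{[\Srv]})]\le f(x)+\tfrac{\E[|\Srv|]}{n}\ve{\nabla f(x)}{h}+\tfrac{1}{2n}\|h\|_w^2$, and since $\E[|\Srv|]\beta=1$ by the choice of $\beta$, the last term equals $\tfrac{\E[|\Srv|]}{n}\cdot\tfrac{\beta}{2}\|h\|_w^2$, which is precisely \eqref{eq:general_form_for_expectation} for $(f,\Srv)$. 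As $x,h$ were arbitrary, this gives $(f,\Srv)\sim ESO(\beta,w)$.

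I do not anticipate a genuine obstacle here; the argument is essentially a weighted average of the hypotheses. The only point requiring care is the bookkeeping in the quadratic term — interchanging the $i$- and $j$-summations and verifying that the emerging weight vector is exactly $\sum_j q_j\E[|\Srv_j|]\beta_j w_j$ — together with the small ``reshuffling'' identity $\E[|\Srv|]\beta=1$ that matches the constant pulled out front against the prescribed $\beta$. One should also not forget to justify that $w$ is strictly positive and $\beta$ is finite, both of which hinge on the hypothesis that $\Srv$ is not nil.
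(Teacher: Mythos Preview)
Your proposal is correct and follows essentially the same approach as the paper: decompose $\E[f(x+h_{[\Srv]})]$ via Lemma~\ref{lem:double_stuff}(i), apply the individual ESO bounds, and then use Lemma~\ref{lem:double_stuff}(ii) to collect the linear and quadratic terms. Your version is slightly more explicit about well-posedness (positivity of $\beta$ and $w$) and about the interchange of the $i$- and $j$-sums in the quadratic term, but the argument is the same.
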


\begin{proof}
First note that from part (iv) of Lemma~\ref{lem:double_stuff} we know that $\hat{S}\eqdef \sum_j q_j \hat{S}_j$ is uniform and hence it makes sense to speak about ESO involving this sampling. Next, we can write
\begin{eqnarray*}
\E\left[f(x+h_{[\hat{S}]})\right]
&=&  \sum_{S}\Prob(\Srv =S)f(x+h_{[S]}) \overset{\eqref{eq:convx_comb_2_samplings}}{=} \sum_S \sum_{j} q_j \Prob(\Srv_j = S)f(x+h_{[S]})\\
&=&  \sum_{j} q_j \sum_S \Prob(\Srv_j = S)f(x+h_{[S]}) =  \sum_{j} q_j \E\left[f(x+h_{[\Srv_j]})\right].
\end{eqnarray*}
It now remains to use \eqref{eq:general_form_for_expectation} and  part (ii) of Lemma~\ref{lem:double_stuff}:
\begin{eqnarray*}
 \sum_{j=1}^m q_j \E\left[f(x+h_{[\Srv_j]})\right] &\overset{\eqref{eq:general_form_for_expectation}}{\leq} & \sum_{j=1}^m q_j \left(f(x)+ \tfrac{\E[|\Srv_j|]}{n} \left( \ve{\nabla f(x)}{h}  + \tfrac{\beta_j}{2}\|h\|_{w_j}^2 \right)\right)\\
&=& f(x) + \tfrac{\sum_j q_j \E[|\Srv_j|]}{n} \ve{\nabla f(x)}{h}
 +  \tfrac{1}{2n}\sum_{j} q_j \E[|\Srv_j|]\beta_j \|h\|_{w_j}^2 \\
&\overset{(\text{Lemma}~\ref{lem:double_stuff} \text{ (ii)})}{=}& f(x) + \tfrac{\E[|\Srv|]}{n} \left(\ve{\nabla f(x)}{h}
 +  \tfrac{\sum_{j} q_j \E[|\Srv_j|]\beta_j \|h\|_{w_j}^2}{2 \sum_j q_j \E[|\Srv_j|]} \right)\\
&=& f(x) + \tfrac{\E[|\Srv|]}{n} \left(\ve{\nabla f(x)}{h}
 +  \tfrac{1}{2\sum_j q_j \E[|\Srv_j|]}\|h\|_{w}^2\right),
\end{eqnarray*}
where $w =  \sum_{j} q_j \E[|\Srv_j|]\beta_j w_j$. In the third step we have also used the fact that $\E[|\Srv|]>0$ which follows from the assumption that $\Srv$ is not nil.
\end{proof}

\subsection{ESO for a conic combination of functions}

 We now establish an ESO for a conic combination of functions each of which is already equipped with an ESO. It offers a complementary result to Theorem~\ref{thm:convex_comb_samplings}.


\begin{theorem}[Conic Combination of Functions] If $(f_j,\Srv) \sim ESO(\beta_j,w_j)$ for $j=1,\dots,m$, then for any $c_1,\dots,c_m \geq 0$ we have
\[\left(\sum_{j=1}^m c_j f_j,\Srv\right) \sim ESO\left(1, \sum_{j=1}^m c_j \beta_j w_j\right).\]
\end{theorem}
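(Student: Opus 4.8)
The plan is to unwind the definitions on both sides and reduce the claim to the ESO inequalities we already have for the individual $f_j$. First I would fix $x,h\in\R^\N$ and write $f\eqdef \sum_{j=1}^m c_j f_j$. Since expectation is linear and $f(x+h_{[\Srv]}) = \sum_j c_j f_j(x+h_{[\Srv]})$, we immediately get
\[
\Exp[f(x+h_{[\Srv]})] = \sum_{j=1}^m c_j \Exp[f_j(x+h_{[\Srv]})].
\]
Now apply the hypothesis $(f_j,\Srv)\sim ESO(\beta_j,w_j)$, i.e. inequality \eqref{eq:general_form_for_expectation}, to each term, and use $c_j\geq 0$ so the inequalities are preserved under the nonnegative combination.

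The second step is to collect the resulting upper bound into the required ESO form. Summing the per-$j$ bounds gives
\[
\Exp[f(x+h_{[\Srv]})] \leq \sum_{j=1}^m c_j f_j(x) + \tfrac{\E[|\Srv|]}{n}\left(\sum_{j=1}^m c_j \ve{\nabla f_j(x)}{h} + \tfrac12 \sum_{j=1}^m c_j \beta_j \|h\|_{w_j}^2\right).
\]
The first sum is just $f(x)$, and by linearity of the gradient $\sum_j c_j \nabla f_j(x) = \nabla f(x)$, so the inner product term becomes $\ve{\nabla f(x)}{h}$. For the quadratic term I would use the fact (following from \eqref{eq:norms}) that $\|h\|_{w_j}^2 = \sum_i (w_j)_i \|h^{(i)}\|_{(i)}^2$ is linear in the weight vector $w_j$, so $\sum_j c_j \beta_j \|h\|_{w_j}^2 = \|h\|_{w}^2$ with $w \eqdef \sum_{j=1}^m c_j \beta_j w_j$. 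This yields exactly
\[
\Exp[f(x+h_{[\Srv]})] \leq f(x) + \tfrac{\E[|\Srv|]}{n}\left(\ve{\nabla f(x)}{h} + \tfrac{1}{2}\|h\|_{w}^2\right),
\]
which is \eqref{eq:general_form_for_expectation} with $\beta = 1$ and this $w$; that is, $(f,\Srv)\sim ESO(1,w)$ as claimed. One should note that $w\in\R^n_{++}$ provided at least one $c_j>0$ (each $\beta_j>0$ and $w_j\in\R^n_{++}$); if all $c_j=0$ the statement is vacuous, so this is a harmless edge case worth a one-line remark.

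Honestly, there is no serious obstacle here — this is a bookkeeping argument, and the only thing to be careful about is the "reshuffling" between $\beta$ and $w$: the natural bound one first writes has a generic $\beta$ and weights $c_j\beta_j w_j$, but by the homogeneity $\|h\|_{c w}^2 = c\|h\|_w^2$ (used already in \eqref{eq:ESO_shift}) one absorbs the $\beta_j$'s into the weights and reports $\beta=1$. The mild subtlety worth stating explicitly is why linearity in the weights is legitimate, namely that $\|\cdot\|_w^2$ depends linearly (not just monotonically) on $w$ through the formula in \eqref{eq:norms}; once that is observed, the proof is a two-line computation.
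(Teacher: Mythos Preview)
Your proposal is correct and follows essentially the same approach as the paper: apply the individual ESO bounds, use linearity of expectation and of the gradient, and collect the quadratic terms via the linearity of $\|h\|_w^2$ in $w$ to obtain the ESO with $\beta=1$ and $w=\sum_j c_j\beta_j w_j$. The paper's proof omits your remarks on the edge case and on why the weights combine linearly, but the argument is otherwise line-for-line the same.
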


\noindent {\em Proof.}
Letting $f=\sum_j c_j f_j$, we get
\begin{eqnarray*}\Exp\left[\sum_j c_j f_j\left(x+\vsubset{h}{\Srv}\right)\right]
&= &   \sum_j c_j \Exp\left[ f_j\left(x+\vsubset{h}{\Srv}\right)\right]\\
&\leq& \sum_j c_j \left(f_j(x)+ \tfrac{\E[|\Srv|]}{n} \left( \ve{\nabla f_j(x)}{h}  + \tfrac{\beta_j}{2}\|h\|_{w_j}^2 \right)\right)\\
&=& \sum_j c_j f_j(x)+ \tfrac{\E[|\Srv|]}{n} \left( \sum_j c_j \ve{\nabla f_j(x)}{h}  + \sum_j  \tfrac{c_j \beta_j}{2}\|h\|_{w_j}^2 \right)\\
&=& f(x) + \tfrac{\E[|\Srv|]}{n} \left( \ve{\nabla f(x)}{h} + \tfrac{1}{2}\|h\|_{\sum_j c_j \beta_j w_j}^2 \right). \qquad \qed
\end{eqnarray*}


\section{Expected Separable Overapproximation (ESO) of Partially Separable Functions}\label{SEC:ESO_for_PS_functions}

Here we derive ESO inequalities for partially separable smooth functions $f$ and (proper) uniform (Section~\ref{sec:uniform}), nonoverlapping uniform (Section~\ref{sec:NOuniform}), nice (Section~\ref{sec:nice}) and doubly uniform (Section~\ref{sec:DUsamplings}) samplings.

\subsection{Uniform samplings}\label{sec:uniform}

Consider an arbitrary proper sampling $\Srv$ and let $\nu = (\nu_1,\dots,\nu_n)^T$ be defined by
\[\nu_i \eqdef \Exp\left[\min\{\omega,|\Srv|\} \;|\; i \in \Srv \right] = \tfrac{1}{p_i} \sum_{S: i \in S} \pp(S) \min\{ \omega , |S| \}, \qquad i \in \NN.\]


\begin{lemma} If $\Srv$ is proper, then
\begin{equation}\label{eq:ESO_987987}\Exp\left[f(x+h_{[\Srv]})\right] \leq f(x) + \ve{\nabla f(x)}{h}_p +  \tfrac{1}{2}  \|h\|_{p \odot \nu \odot \Lip}^2.\end{equation}
\end{lemma}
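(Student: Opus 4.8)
\noindent\emph{Proof plan.} The plan is to condition on the realization of the random set $\Srv$, bound $f(x+h_{[S]})$ for each fixed $S$ by means of the deterministic separable overapproximation of Theorem~\ref{thm:globalLipConstant}, and then take expectations over $\Srv$. The key structural observation is that the vector $h_{[S]}$ has support contained in $S$, which converts the quantity $\max_{J\in\JJ}|J\cap\support(\cdot)|$ appearing in the DSO bound into something controlled by both $\omega$ and $|S|$.

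Concretely, I would first fix $S\subseteq\NN$ and note that $\support(h_{[S]})\subseteq S$, so that for every $J\in\JJ$ we have $|J\cap\support(h_{[S]})|\le|J\cap S|\le\min\{|J|,|S|\}\le\min\{\omega,|S|\}$ by \eqref{eq:omega}. Substituting this into \eqref{eq:upperBoundOnLipConst} applied to the vector $h_{[S]}$ gives
\[
f(x+h_{[S]}) \le f(x) + \ve{\nabla f(x)}{h_{[S]}} + \tfrac{\min\{\omega,|S|\}}{2}\,\|h_{[S]}\|_L^2 .
\]
Replacing $S$ by $\Srv$ and taking expectations, the linear term is dealt with by the first identity of Lemma~\ref{lem:basic} (equivalently \eqref{eq:simple3} with unit weights), since $\ve{\nabla f(x)}{h_{[\Srv]}}=\sum_{i\in\Srv}\ve{\nabla_i f(x)}{h^{(i)}}$, giving $\Exp[\ve{\nabla f(x)}{h_{[\Srv]}}]=\sum_i p_i\ve{\nabla_i f(x)}{h^{(i)}}=\ve{\nabla f(x)}{h}_p$.

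The remaining work is the quadratic term $\Exp[\min\{\omega,|\Srv|\}\,\|h_{[\Srv]}\|_L^2]$. Writing it as $\Exp[\sum_{i\in\Srv}\min\{\omega,|\Srv|\}\,L_i\|h^{(i)}\|_{(i)}^2]$, I would expand the expectation as $\sum_{S}\pp(S)\sum_{i\in S}\min\{\omega,|S|\}L_i\|h^{(i)}\|_{(i)}^2$, interchange the two summations, and recognize that $\sum_{S:i\in S}\pp(S)\min\{\omega,|S|\}=p_i\nu_i$ directly from the definition of $\nu_i$ (properness, $p_i>0$, is exactly what makes $\nu_i$ well defined). This rewrites the quadratic term as $\sum_i p_i\nu_i L_i\|h^{(i)}\|_{(i)}^2=\|h\|_{p\odot\nu\odot L}^2$, and assembling the three pieces yields \eqref{eq:ESO_987987}.

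The one place that needs care — the ``obstacle'', though a mild one — is precisely this quadratic term: the weight $\min\{\omega,|\Srv|\}$ depends on the random set itself, so one cannot feed it into Lemma~\ref{lem:basic} as a constant $\theta_i$; instead one must keep the full sum over $S$ and observe that it reassembles exactly $p_i\nu_i$. Everything else is a routine interchange of sums together with the already-established DSO inequality.
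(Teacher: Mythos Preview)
Your proposal is correct and follows essentially the same approach as the paper: apply the DSO inequality \eqref{eq:upperBoundOnLipConst} to $h_{[\Srv]}$, bound $\max_{J\in\JJ}|J\cap\support(h_{[\Srv]})|\le\min\{\omega,|\Srv|\}$, take expectations using \eqref{eq:simple3} for the linear term, and for the quadratic term expand over $S$, swap the order of summation, and recognize $\sum_{S:i\in S}\pp(S)\min\{\omega,|S|\}=p_i\nu_i$ from the definition of $\nu_i$. Your remark that properness is exactly what makes $\nu_i$ well defined is a nice observation that the paper leaves implicit.
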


\noindent {\em Proof.}
Let us use Theorem~\ref{thm:globalLipConstant} with $h$ replaced by $h_{[\Srv]}$. Note that
$\max_{J\in \JJ} |J\cap \support(h_{[\Srv]}) | \leq \max_{J \in \JJ} |J \cap \Srv| \leq \min\{\omega, |\Srv|\}$. Taking expectations of both sides of \eqref{eq:upperBoundOnLipConst} we therefore get
\begin{eqnarray}
\Exp\left[f(x+h_{[\Srv]})\right] & \overset{\eqref{eq:upperBoundOnLipConst}}{\leq} & f(x) + \Exp\left[\ve{\nabla f(x)}{h_{[\Srv]}}\right] + \tfrac{1}{2} \Exp \left[ \min\{\omega,|\Srv|\}\|h_{[\Srv]}\|_{\Lip}^2 \right]\notag\\
& \overset{\eqref{eq:simple3}}{=}&  f(x) + \ve{\nabla f(x)}{h}_p + \tfrac{1}{2} \Exp \left[ \min\{\omega,|\Srv|\}\|h_{[\Srv]}\|_{\Lip}^2 \right].\label{eq:oinas593}
\end{eqnarray}
It remains to bound the last term in the expression above. Letting $\theta_i = \Lip_i \|h^{(i)}\|_{(i)}^2$, we have
\begin{align}\Exp & \left[ \min\{\omega,|\Srv|\}\|h_{[\Srv]}\|_{\Lip}^2 \right] = \Exp \left[  \sum_{i\in \Srv} \min\{\omega,|\Srv|\} \Lip_i \|h^{(i)}\|_{(i)}^2 \right] = \sum_{S \subset \NN} \pp(S) \sum_{i\in S} \min\{\omega,|S|\} \theta_i\notag\\
& = \sum_{i=1}^n \theta_i \sum_{S : i \in S} \min\{\omega,|S|\} \pp(S) = \sum_{i=1}^n \theta_i p_i \Exp\left[\min\{\omega,|\Srv|\} \;|\; i\in \Srv\right] =\sum_{i=1}^n \theta_i p_i \nu_i = \|h\|_{p \odot \nu \odot \Lip}^2. \qquad \qed \label{eq:s5d9nsh}
\end{align}



The above lemma will now be used to establish ESO for arbitrary (proper) uniform samplings.

\begin{theorem}\label{thm:proper_uniform_ESO}
If  $\Srv$ is proper and uniform, then
\begin{equation}\label{eq:ESO_uniform}(f,\Srv) \sim ESO(1, \nu \odot \Lip).\end{equation}
If, in addition,  $\Prob(|\Srv|=\tau)=1$ (we say that $\Srv$ is $\tau$-uniform), then
\begin{equation}\label{eq:ESO_uniform_reliable}(f,\Srv) \sim ESO(\min\{\omega,\tau\},\Lip).\end{equation}
Moreover, ESO \eqref{eq:ESO_uniform_reliable} is monotonic.
\end{theorem}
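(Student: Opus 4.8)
The plan is to derive both claims directly from the preceding Lemma (inequality \eqref{eq:ESO_987987}) by exploiting uniformity, and then to get monotonicity from the DSO Theorem~\ref{thm:globalLipConstant} applied pointwise.

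\textbf{Step 1: the general uniform ESO \eqref{eq:ESO_uniform}.} Start from \eqref{eq:ESO_987987}. Since $\Srv$ is uniform, \eqref{eq:simple2unif} gives $p_i = \E[|\Srv|]/n$ for every $i$, so $\ve{\nabla f(x)}{h}_p = \tfrac{\E[|\Srv|]}{n}\ve{\nabla f(x)}{h}$ and, pulling the common factor out of the weighted norm, $\|h\|_{p\odot \nu \odot L}^2 = \tfrac{\E[|\Srv|]}{n}\|h\|_{\nu \odot L}^2$. Substituting these into \eqref{eq:ESO_987987} yields
\[
\E[f(x+h_{[\Srv]})] \leq f(x) + \tfrac{\E[|\Srv|]}{n}\left(\ve{\nabla f(x)}{h} + \tfrac12\|h\|_{\nu\odot L}^2\right),
\]
which is exactly the ESO inequality \eqref{eq:general_form_for_expectation} with $\beta=1$ and $w = \nu\odot L$; since $\Srv$ is proper and uniform, this is precisely $(f,\Srv)\sim ESO(1,\nu\odot L)$.

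\textbf{Step 2: the $\tau$-uniform ESO \eqref{eq:ESO_uniform_reliable}.} When $\Prob(|\Srv|=\tau)=1$, the conditional expectation defining $\nu_i$ collapses: $\nu_i = \E[\min\{\omega,|\Srv|\}\mid i\in\Srv] = \min\{\omega,\tau\}$ for all $i$, so $\nu\odot L = \min\{\omega,\tau\}\,L$. By the ``reshuffling'' remark \eqref{eq:ESO_shift} (with $c=\min\{\omega,\tau\}$), $(f,\Srv)\sim ESO(1,\min\{\omega,\tau\}L)$ is equivalent to $(f,\Srv)\sim ESO(\min\{\omega,\tau\},L)$, giving \eqref{eq:ESO_uniform_reliable}.

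\textbf{Step 3: monotonicity.} Here one must show $F(x+(h(x))_{[\Srv]})\leq F(x)$ with probability $1$, where $h(x)$ minimizes $H_{\beta,w}(x,\cdot)$ with $\beta=\min\{\omega,\tau\}$, $w=L$. Fix any realization $S$ of $\Srv$ (so $|S|=\tau$) and write $\tilde h \eqdef (h(x))_{[S]}$, which has $\support(\tilde h)\subseteq S$. Apply the DSO inequality \eqref{eq:upperBoundOnLipConst} to $f(x+\tilde h)$: since $\max_{J\in\JJ}|J\cap\support(\tilde h)| \leq \max_{J\in\JJ}|J\cap S| \leq \min\{\omega,\tau\} = \beta$, we get $f(x+\tilde h)\leq f(x) + \ve{\nabla f(x)}{\tilde h} + \tfrac{\beta}{2}\|\tilde h\|_L^2$. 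Adding $\cPsi(x+\tilde h)$ to both sides and using block separability of $\cPsi$ together with $\support(\tilde h)\subseteq S$ (so the blocks outside $S$ agree with $x$) identifies the right-hand side as $H_{\beta,L}(x,\tilde h)$ restricted to the block pattern of $S$; more directly, $f(x+\tilde h) + \cPsi(x+\tilde h) \leq H_{\beta,L}(x,\tilde h)$. Now because $H_{\beta,L}(x,\cdot)$ is block separable and $h(x)$ minimizes it blockwise, zeroing out the blocks outside $S$ cannot increase the objective: $H_{\beta,L}(x,\tilde h) \leq H_{\beta,L}(x,0) = F(x)$ (each block $i\notin S$ contributes its value at $h^{(i)}=0$, and for $i\notin S$ the term $\ve{\nabla_i f(x)}{0}+\tfrac{\beta L_i}{2}\|0\|_{(i)}^2 + \cPsi_i(x^{(i)})$ is what appears in $F(x)$, while for $i\in S$ the minimizing block value is at least as good as $0$). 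Chaining the inequalities gives $F(x+\tilde h) = f(x+\tilde h)+\cPsi(x+\tilde h) \leq F(x)$, and since this holds for every realization $S$, it holds with probability $1$.

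The main obstacle is Step 3, specifically the bookkeeping needed to argue that $H_{\beta,L}(x,(h(x))_{[S]}) \leq H_{\beta,L}(x,0) = F(x)$: one must be careful that block separability lets us compare $h(x)$ with $0$ block by block, that the blocks outside $S$ contribute exactly the corresponding terms of $F(x)$, and that the DSO constant $\max_{J}|J\cap\support(\tilde h)|$ is genuinely bounded by $\beta=\min\{\omega,\tau\}$ rather than something larger. The first two steps are routine substitutions into \eqref{eq:ESO_987987} using \eqref{eq:simple2unif} and \eqref{eq:ESO_shift}.
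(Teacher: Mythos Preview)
Your proof is correct and follows essentially the same route as the paper: Steps~1 and~2 are identical to the paper's (substituting $p_i=\E[|\Srv|]/n$ into \eqref{eq:ESO_987987} and then specializing $\nu_i=\min\{\omega,\tau\}$), and Step~3 is the same argument written in slightly different packaging---the paper introduces the block functions $\kappa_i(t)=\ve{\nabla_i f(x)}{t}+\tfrac{\beta w_i}{2}\|t\|_{(i)}^2+\cPsi_i(x^{(i)}+t)$ and compares $\kappa_i((h(x))^{(i)})\leq\kappa_i(0)$ directly, while you phrase the same comparison as $H_{\beta,L}(x,\tilde h)\leq H_{\beta,L}(x,0)$ via block separability. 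Both are the same inequality.
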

\begin{proof}First, \eqref{eq:ESO_uniform} follows from \eqref{eq:ESO_987987} since for a uniform sampling one has $p_i=\E[|\Srv|]/n$ for all $i$. If  $\Prob(|\Srv|=\tau)=1$, we get $\nu_i=\min\{\omega,\tau\}$ for all $i$; \eqref{eq:ESO_uniform_reliable} therefore follows from \eqref{eq:ESO_uniform}. Let us now establish monotonicity. Using the deterministic separable overapproximation \eqref{eq:upperBoundOnLipConst} with $h=h_{[\Srv]}$,
\begin{align}\notag F(x+ h_{[\Srv]}) &\leq f(x) + \ve{\nabla f(x)}{h_{[\Srv]}} + \max_{J \in \JJ}\tfrac{|J \cap \Srv|}{2}\|h_{[\Srv]}\|_{\Lip}^2 + \cPsi(x+h_{[\Srv]})\\
\label{eq:1234567890}& \leq f(x) + \ve{\nabla f(x)}{h_{[\Srv]}} + \tfrac{\beta}{2}\|h_{[\Srv]}\|_{w}^2 + \cPsi(x+h_{[\Srv]})\\
\label{eq:98jupper} &= f(x) + \sum_{i\in \Srv} \underbrace{\left(\ve{\nabla f(x)}{U_i h^{(i)}} + \tfrac{\beta w_i}{2}\|h^{(i)}\|^2_{(i)} + \cPsi_i(x^{(i)}+h^{(i)})\right)}_{\eqdef \kappa_i(h^{(i)})} + \sum_{i \notin \Srv} \cPsi_i(x^{(i)}).\end{align}
Now let $h(x)=\arg \min_h H_{\beta,w}(x,h)$ and recall that
\begin{align*}H_{\beta,w}(x,h) &= f(x) + \ve{\nabla f(x)}{h} + \tfrac{\beta}{2}\|h\|_w^2 + \cPsi(x+h)\\
&= f(x) + \sum_{i=1}^n \left(\ve{\nabla f(x)}{U_i h^{(i)}} + \tfrac{\beta w_i}{2}\|h^{(i)}\|_{(i)}^2 + \cPsi_i(x^{(i)}+h^{(i)})\right) = f(x) + \sum_{i=1}^n \kappa_i(h^{(i)}).
\end{align*}
So, by definition, $(h(x))^{(i)}$ minimizes $\kappa_i(t)$ and hence, $(h(x))_{[\Srv]}$ (recall \eqref{eq:lllop09}) minimizes the upper bound \eqref{eq:98jupper}. In particular, $(h(x))_{[\Srv]}$ is better than a nil update, which immediately gives $F(x+(h(x))_{[\Srv]}) \leq f(x) + \sum_{i \in \Srv} \kappa_i (0) + \sum_{i \notin \Srv} \cPsi_i(x^{(i)}) = F(x)$.
\end{proof}

Besides establishing an ESO result, we have just shown that, in the case of $\tau$-uniform samplings with a conservative estimate for $\beta$, PCDM1 is monotonic, i.e.,  $F(x_{k+1})\leq F(x_k)$. In particular, PCDM1 and PCDM2 coincide. We call the estimate $\beta = \min\{\omega,\tau\}$ ``conservative'' because it can be improved (made smaller) in special cases; e.g., for the $\tau$-nice sampling. Indeed, Theorem~\ref{thm:ESO-nice} establishes an ESO for the $\tau$-nice sampling with the same $w$ ($w=\Lip$), but with $\beta = 1 + \tfrac{(\omega-1)(\tau-1)}{n-1}$, which is better (and can be much better than)  $\min\{\omega,\tau\}$. Other things equal, smaller $\beta$ directly translates into better complexity. The price for the small $\beta$ in the case of the $\tau$-nice sampling is the loss of monotonicity. This is not a problem for strongly convex objective, but for merely convex objective this is an issue as the analysis techniques we developed are only applicable to the monotonic method PCDM2 (see Theorem~\ref{thm:complexity-convex-case}).


\subsection{Nonoverlapping uniform  samplings} \label{sec:NOuniform}

Let $\Srv$ be a (proper) nonoverlapping uniform sampling as defined in \eqref{eq:non-overlap-uniform}. If $i\in S^j$, for some $j \in \{1,2,\dots,l\}$, define
\begin{equation}\label{eq:gamma}\gamma_i \eqdef \max_{J\in \JJ} |J \cap S^j|,\end{equation}
and let $\gamma = (\gamma_1,\dots,\gamma_n)^T$.

Note that, for example, if $\Srv$ is the serial uniform sampling, then $l=n$ and $S^j=\{j\}$ for $j=1,2,\dots,l$, whence $\gamma_i = 1$ for all $i\in \NN$. For the fully parallel sampling we have $l=1$ and $S^1 = \{1,2,\dots,n\}$, whence $\gamma_i = \omega$ for all $i\in \NN$.

\begin{theorem} \label{thm:ESO-nonoverlapping}If $\Srv$ a nonoverlapping  uniform sampling, then
\begin{equation}\label{eq:ESO_general}(f,\Srv) \sim ESO(1,\gamma \odot \Lip).\end{equation}
Moreover, this ESO is monotonic.
\end{theorem}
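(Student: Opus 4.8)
# Proof Proposal for Theorem (ESO for Nonoverlapping Uniform Samplings)

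The plan is to reduce the expectation $\Exp[f(x+h_{[\Srv]})]$ to a sum over the $l$ partition classes and apply the deterministic separable overapproximation (DSO, Theorem~\ref{thm:globalLipConstant}) to each summand separately. First I would use the fact that a nonoverlapping uniform sampling puts mass $\tfrac{1}{l}$ on each of $S^1,\dots,S^l$, and nothing else, so that
\[
\Exp\left[f(x+h_{[\Srv]})\right] = \frac{1}{l}\sum_{j=1}^l f\left(x+h_{[S^j]}\right).
\]
For each fixed $j$, I would apply \eqref{eq:upperBoundOnLipConst} with $h$ replaced by $h_{[S^j]}$. Since $\support(h_{[S^j]}) \subseteq S^j$, we have $\max_{J\in \JJ}|J\cap \support(h_{[S^j]})| \leq \max_{J\in\JJ}|J\cap S^j|$, and the key point is that for $i \in S^j$ this quantity is exactly $\gamma_i$ (which is constant over $i\in S^j$ by definition \eqref{eq:gamma}). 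Hence
\[
f\left(x+h_{[S^j]}\right) \leq f(x) + \ve{\nabla f(x)}{h_{[S^j]}} + \tfrac{1}{2}\sum_{i\in S^j}\gamma_i \Lip_i \|h^{(i)}\|_{(i)}^2.
\]

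Next I would sum this over $j=1,\dots,l$ and divide by $l$. Since $\{S^j\}$ is a partition of $\NN$, summing the linear term gives $\ve{\nabla f(x)}{h}$ and summing the quadratic term gives $\sum_{i=1}^n \gamma_i \Lip_i\|h^{(i)}\|_{(i)}^2 = \|h\|_{\gamma\odot\Lip}^2$. Dividing by $l$ and recalling from \eqref{eq:non-overlap-uniform} that $\Exp[|\Srv|] = n/l$, so that $\tfrac{1}{l} = \tfrac{\Exp[|\Srv|]}{n}$, yields exactly
\[
\Exp\left[f(x+h_{[\Srv]})\right] \leq f(x) + \tfrac{\Exp[|\Srv|]}{n}\left(\ve{\nabla f(x)}{h} + \tfrac{1}{2}\|h\|_{\gamma\odot\Lip}^2\right),
\]
which is the claimed $(1,\gamma\odot\Lip)$-ESO. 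I should double-check the case where $\Srv$ need not be proper, but since the theorem can be read under the standing assumption that the sampling is proper (every block lies in some $S^j$), this is not an issue; alternatively one can invoke \eqref{eq:separable_uniform} to handle $\cPsi$-type terms uniformly, though here only $f$ is involved.

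For monotonicity, I would mimic the monotonicity argument in the proof of Theorem~\ref{thm:proper_uniform_ESO}: apply the DSO \eqref{eq:upperBoundOnLipConst} to $F(x+h_{[\Srv]}) = f(x+h_{[\Srv]}) + \cPsi(x+h_{[\Srv]})$, bound $\max_{J}|J\cap\Srv|$ on the event $\Srv = S^j$ by $\gamma_i$ for $i\in S^j$ to get a separable upper bound of the form $f(x) + \sum_{i\in\Srv}\kappa_i(h^{(i)}) + \sum_{i\notin\Srv}\cPsi_i(x^{(i)})$ with $\beta=1$, $w=\gamma\odot\Lip$, observe that $(h(x))^{(i)}$ minimizes $\kappa_i$ by construction of $h(x)$ in \eqref{eq:h(x)}, hence $(h(x))_{[\Srv]}$ beats the nil update, giving $F(x+(h(x))_{[\Srv]}) \leq F(x)$ with probability one. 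The main obstacle, such as it is, is verifying that the relevant $\beta$ here is genuinely $1$ (not $\max_i \gamma_i$): this works precisely because $\gamma_i$ is constant on each partition block $S^j$, so the block-Lipschitz-type weight $\gamma_i \Lip_i$ can be absorbed entirely into $w$, leaving $\beta = 1$ — this is the one spot where the nonoverlapping structure is essential and must be stated carefully.
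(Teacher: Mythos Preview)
Your proposal is correct and follows essentially the same route as the paper: apply the DSO (Theorem~\ref{thm:globalLipConstant}) with $h$ replaced by $h_{[S^j]}$, average over the $l$ partition classes, use that $\gamma_i$ is constant on each $S^j$ to absorb the factor $\max_{J}|J\cap S^j|$ into the weight vector, and then invoke the monotonicity argument from Theorem~\ref{thm:proper_uniform_ESO}. Your observation that the nonoverlapping structure is what allows $\beta=1$ (rather than $\max_i \gamma_i$) is exactly the point.
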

\begin{proof} By Theorem~\ref{thm:globalLipConstant}, used  with $h$ replaced by $h_{[S^j]}$ for $j=1,2,\dots,l$, we get
\begin{equation}\label{eq:gh77}f(x+h_{[S^j]}) \leq f(x) + \ve{\nabla f(x)}{h_{[S^j]}} + \max_{J \in \JJ}\tfrac{|J \cap S^j|}{2}\|h_{[S^j]}\|_L^2.\end{equation}
Since $\Srv=S^j$ with probability $\tfrac{1}{l}$,
\begin{eqnarray*}
\Exp\left[f(x+h_{[\Srv]})\right] & \overset{\eqref{eq:gh77}}{\leq} & \tfrac{1}{l}\sum_{j=1}^l \left( f(x) +  \ve{\nabla f(x)}{h_{[S^j]}} +  \max_{J \in \JJ}\tfrac{ |J \cap S^j|}{2}  \|h_{[S^j]}\|_{\Lip}^2\right) \notag\\
&\overset{\eqref{eq:gamma}}{=} &  f(x) + \tfrac{1}{l}\left(\ve{\nabla f(x)}{h} + \tfrac{1}{2} \sum_{j=1}^l   \sum_{i\in S^j}\gamma_i \Lip_i \|h^{(i)}\|_{(i)}^2\right)\\
& =&  f(x) + \tfrac{1}{l}\left(\ve{\nabla f(x)}{h} + \tfrac{1}{2} \|h\|_{\gamma \odot \Lip}^2\right),
\end{eqnarray*}
which establishes \eqref{eq:ESO_general}. It now only remains to establish monotonicity. Adding $\cPsi(x+h_{[\Srv]})$ to \eqref{eq:gh77} with $S^j$ replaced by $\Srv$, we get $F(x+ h_{[\Srv]}) \leq f(x) + \ve{\nabla f(x)}{h_{[\Srv]}} + \tfrac{\beta}{2}\|h_{[\Srv]}\|_{w}^2 + \cPsi(x+h_{[\Srv]})$.
From this point on the proof is identical to that in Theorem~\ref{thm:proper_uniform_ESO}, following equation \eqref{eq:1234567890}.
\end{proof}

\subsection{Nice samplings}\label{sec:nice}

In this section we establish an ESO for nice samplings.

\begin{theorem}\label{thm:ESO-nice} If $\Srv$ is the $\tau$-nice sampling and $\tau\neq 0$, then
\begin{equation}\label{eq:inequalityToGetZeta}(f,\Srv) \sim ESO \left( 1+
      \frac{ (\omega-1)(\tau-1)}{\max(1,n-1)}, L\right).
\end{equation}
\end{theorem}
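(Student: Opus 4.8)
The plan is to compute $\E[f(x+h_{[\hat S]})]$ directly from the DSO inequality (Theorem~\ref{thm:globalLipConstant}), taking expectations over the $\tau$-nice sampling. As in the proof of Theorem~\ref{thm:proper_uniform_ESO}, applying \eqref{eq:upperBoundOnLipConst} with $h$ replaced by $h_{[\hat S]}$ and using $\E[\langle \nabla f(x),h_{[\hat S]}\rangle]=\tfrac{\tau}{n}\langle\nabla f(x),h\rangle$ (from \eqref{eq:simple3unif} with $w=L$ and $\E[|\hat S|]=\tau$), the only nontrivial quantity to bound is
\[
\E\left[\max_{J\in\JJ}|J\cap\hat S|\cdot\|h_{[\hat S]}\|_L^2\right].
\]
The crude bound $\max_{J}|J\cap\hat S|\le\min\{\omega,\tau\}$ used before is what I want to avoid; instead I want to condition on $\hat S$ in a way that exposes the per-block cardinalities. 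The key idea is to rewrite $\|h_{[\hat S]}\|_L^2=\sum_{i\in\hat S}L_i\|h^{(i)}\|_{(i)}^2$ and, rather than pulling the $\max_J$ out front, to keep it attached to each term via the support. Precisely, for each $i\in\hat S$ the coefficient $\max_{J\in\JJ}|J\cap\hat S|$ is bounded by $\max_{J\ni i}|J\cap\hat S|$ only when $i$ actually appears in some $J$; more usefully, I will bound $\max_{J\in\JJ}|J\cap\support(h_{[\hat S]})|$ and note that it suffices to control, for each fixed $J\in\JJ$, the quantity $\E[|J\cap\hat S|\cdot\sum_{i\in J\cap\hat S}L_i\|h^{(i)}\|_{(i)}^2]$ — but this double-counts, so the cleanest route is the conditioning argument below.

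First I would condition on the value of $|\hat S|$; since $\hat S$ is $\tau$-nice, $|\hat S|=\tau$ with probability $1$, so this is vacuous, and the real averaging is over which $\tau$-subset is chosen. The heart of the computation: fix a coordinate $i$ and a set $J\in\JJ$ with $i\in J$; I want $\E\big[\,|J\cap\hat S|\ \mathbf 1_{i\in\hat S}\,\big]$. Writing $|J\cap\hat S|=\sum_{j\in J}\mathbf 1_{j\in\hat S}$ and splitting off the $j=i$ term,
\[
\E\big[|J\cap\hat S|\,\mathbf 1_{i\in\hat S}\big]=p_i+\sum_{j\in J,\,j\ne i}p_{ij}
= \tfrac{\tau}{n}+(|J|-1)\,\tfrac{\tau(\tau-1)}{n\max\{1,n-1\}},
\]
using \eqref{eq:simple2unif} and \eqref{eq:p_ij_equals} (equivalently \eqref{eq:00sjs738} with the appropriate $J$), and since $|J|\le\omega$ this is at most $\tfrac{\tau}{n}\big(1+\tfrac{(\omega-1)(\tau-1)}{\max\{1,n-1\}}\big)$. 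The plan is then to bound the whole expectation by summing over $i$: because $\max_{J\in\JJ}|J\cap\support(h_{[\hat S]})|\le \max_{J\ni i}|J\cap\hat S|$ on the event $i\in\hat S$ is still not quite additive, the correct manoeuvre (exactly as in the proof of the Lemma preceding Theorem~\ref{thm:proper_uniform_ESO}) is to write
\[
\E\Big[\max_{J\in\JJ}|J\cap\hat S|\,\|h_{[\hat S]}\|_L^2\Big]
\le \sum_{i=1}^n L_i\|h^{(i)}\|_{(i)}^2\ \E\Big[\max_{J\in\JJ}|J\cap\hat S|\ \big|\ i\in\hat S\Big]\,p_i,
\]
and then bound the conditional expectation $\E[\max_J|J\cap\hat S|\mid i\in\hat S]$ by a sum over $J\ni i$ of $\E[|J\cap\hat S|\mid i\in\hat S]$ — no, this over-counts badly; instead one bounds $\max_J|J\cap\hat S|\le$ (for the single witnessing $J$) and uses that, conditioned on $i\in\hat S$, there is some $J\ni i$ realizing a large intersection only with small probability. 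This is the delicate point.

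The main obstacle, as flagged, is precisely handling the $\max_{J\in\JJ}$ inside the expectation without throwing away too much. The trick that makes Theorem~\ref{thm:ESO-nice} work — and which I expect the authors use — is to observe that in $\sum_{i\in\hat S}\max_{J}|J\cap\hat S|\,L_i\|h^{(i)}\|^2_{(i)}$ one may, for each $i\in\hat S$, replace $\max_{J}|J\cap\hat S|$ by $\max_{J\ni i}|J\cap\hat S|$ (since if $i\notin J$ for all $J$ the term is irrelevant, as $h^{(i)}$ then does not affect $f$, or one simply bounds by the global max), and then use $\max_{J\ni i}|J\cap\hat S|\le |J_i\cap\hat S|$ for the witnessing $J_i$, and finally — crucially — that taking expectation and summing over $i$, the contribution of each pair $(i,j)$ with $j\ne i$ appears with coefficient $p_{ij}=\tfrac{\tau(\tau-1)}{n\max\{1,n-1\}}$ and there are at most $\omega-1$ such $j$ in any $J_i$, while the diagonal contributes $p_i=\tfrac\tau n$. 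Summing, $\E[\max_J|J\cap\hat S|\,\|h_{[\hat S]}\|_L^2]\le \tfrac{\tau}{n}\big(1+\tfrac{(\omega-1)(\tau-1)}{\max\{1,n-1\}}\big)\|h\|_L^2$. Plugging this and the gradient term back into the expectation of \eqref{eq:upperBoundOnLipConst} and factoring out $\tfrac{\tau}{n}=\tfrac{\E[|\hat S|]}{n}$ yields exactly \eqref{eq:inequalityToGetZeta} with $w=L$ and $\beta=1+\tfrac{(\omega-1)(\tau-1)}{\max(1,n-1)}$. I would double-check the edge case $n=1$ (then $\omega=\tau=1$ and $\beta=1$, consistent with the $\max(1,n-1)$ convention) and $\tau=1$ (recovering the serial ESO $\beta=1$).
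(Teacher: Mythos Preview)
Your approach has a genuine gap: starting from the DSO inequality \eqref{eq:upperBoundOnLipConst} and taking expectations cannot deliver the sharp constant $\beta = 1+\tfrac{(\omega-1)(\tau-1)}{\max(1,n-1)}$. The DSO bound already replaces the finer structure of $f=\sum_J f_J$ by a single scalar coefficient $\max_{J}|J\cap\hat S|$ multiplying the whole of $\|h_{[\hat S]}\|_L^2$, and this is too lossy. Concretely, the inequality you are trying to establish,
\[
\E\Big[\max_{J\in\JJ}|J\cap\hat S|\cdot\|h_{[\hat S]}\|_L^2\Big]\;\le\;\tfrac{\tau}{n}\left(1+\tfrac{(\omega-1)(\tau-1)}{\max(1,n-1)}\right)\|h\|_L^2,
\]
is false. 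Take $n=4$, $\tau=3$, $\JJ=\{\{1,2\},\{3,4\}\}$ (so $\omega=2$), $L_i=1$, $h=(1,1,1,1)$. For every $3$-subset $\hat S$ of $[4]$ one has $\max_J|J\cap\hat S|=2$ and $\|h_{[\hat S]}\|_L^2=3$, so the left side equals $6$, while the right side equals $\tfrac{3}{4}\cdot\tfrac{5}{3}\cdot 4=5$. (With $f(x)=\tfrac12(x_1+x_2)^2+\tfrac12(x_3+x_4)^2$ one checks that the ESO of the theorem holds with equality here; the slack is lost precisely at the DSO step.) The specific move you flag as delicate --- replacing $\max_{J}|J\cap\hat S|$ by $\max_{J\ni i}|J\cap\hat S|$ in the $i$-th summand --- is the culprit: $\max_{J\ni i}|J\cap\hat S|\le \max_{J}|J\cap\hat S|$, so the replacement goes the wrong way for an upper bound. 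In the example above, for $i=1$ one has $\E[\max_{J\ni 1}|J\cap\hat S|\,\mathbf 1_{1\in\hat S}]=\tfrac{5}{4}$ but $\E[\max_{J}|J\cap\hat S|\,\mathbf 1_{1\in\hat S}]=\tfrac{3}{2}$.

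The paper's proof avoids DSO entirely. It works directly with the decomposition $\phi=\sum_{J}\phi^J$, conditions on $\eta_J \eqdef |J\cap\hat S|=k$ for each fixed $J$, and uses convexity of $\phi^J$ to write $\phi^J(h_{[\hat S]})=\phi^J\big(\tfrac{1}{k}\sum_{i\in J\cap\hat S}kU_ih^{(i)}\big)\le \tfrac{1}{k}\sum_{i\in J\cap\hat S}\phi^J(kU_ih^{(i)})$. After applying \eqref{eq:JEJEJEJE} and summing over $J$ (which reconstitutes $\phi$), the inner scaling by $k$ produces a factor $k^2$, and averaging over $k$ gives $\E[|J\cap\hat S|^2]/\omega$, which by \eqref{eq:00sjs738} is exactly $\tfrac{\tau}{n}\big(1+\tfrac{(\omega-1)(\tau-1)}{\max(1,n-1)}\big)$. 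The point is that each $\phi^J$ only ``sees'' its own intersection $|J\cap\hat S|$, never the global maximum, and this per-$J$ bookkeeping is what yields the second moment rather than the expectation of the max.
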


\noindent {\em Proof.}
Let us fix $x$ and define $\phi$ and $\phi^J$ as in the proof of Theorem~\ref{thm:globalLipConstant}. Since
\begin{eqnarray*}
 \Exp \left[\phi(\vsubset{h}{\Srv})\right]
=
 \Exp \left[f(x+\vsubset{h}{\Srv})-f(x)- \ve{\nabla f(x)}{\vsubset{h}{\Srv}} \right] \stackrel{\eqref{eq:simple3unif}}{=}
 \Exp\left[f(x+\vsubset{h}{\Srv})\right]
       -f(x)-\tfrac{\tau}{n} \ve{\nabla f(x)}{h},
\end{eqnarray*}
it now only remains to show that

\begin{equation}\label{eq:aksoio4209809878}
 \Exp \left[\phi(\vsubset{h}{\Srv})\right]\leq
 \tfrac{\tau}{2 n}
 \left(1 + \tfrac{ (\omega-1) ( \tau-1 )  }{ \max(1,n-1) } \right)\|h\|_L^2.
\end{equation}
Let us now adopt the convention that  expectation conditional on an event which happens with probability 0 is equal to 0. Letting $\eta_J \eqdef |J\cap \Srv|$, and using this convention, we can write
\begin{eqnarray}
 \Exp\left[\phi(\vsubset{h}{\Srv})\right]
 =
     \sum_{J\in\JJ} \Exp \left[\phi^J(\vsubset{h}{\Srv})\right] &=&    \sum_{k=0}^n
     \sum_{J\in\JJ} \Exp \left[\phi^J(\vsubset{h}{\Srv}) \;|\; \eta_J=k\right] \Prob(\eta_J = k)\notag\\
&=&
    \sum_{k=0}^n \Prob(\eta_J = k)
     \sum_{J\in\JJ} \Exp \left[\phi^J(\vsubset{h}{\Srv}) \;|\; \eta_J=k\right].\label{eq:8893298d9}
\end{eqnarray}
Note that the last identity follows if we assume, without loss of generality, that all sets $J$ have the same cardinality $\omega$ (this can be achieved by introducing ``dummy'' dependencies). Indeed, in such a case $\Prob(\eta_J = k)$ does not depend on $J$. Now, for any $k\geq 1$ for which $\Prob(\eta_J =k)>0$ (for some $J$ and hence for all), using convexity of $\phi^J$, we can now estimate
\begin{eqnarray}
\Exp\left[\phi^J(\vsubset{h}{\Srv}) \;|\; \eta_J = k\right]
&=&
  \Exp
    \left[\left.\phi^J \left(\tfrac{1}{k} \sum_{i \in J\cap \Srv} k \U_i \vc{h}{i} \right) \right. \;|\; \eta_J = k\right] \notag \\
&\leq&
  \Exp
    \left[\left.
      \tfrac{1}{k}  \sum_{i \in J\cap \Srv}
          \phi^J \left(  k \U_i \vc{h}{i} \right) \right. \;|\; \eta_J=k\right] \overset{\eqref{eq:JEJEJEJE}}{=} \tfrac{1}{\omega}  \sum_{i \in J}
          \phi^J \left(  k \U_i \vc{h}{i} \right).\label{eq:87683893}
\end{eqnarray}

If we now sum the inequalities \eqref{eq:87683893} for all $J\in \JJ$, we get
\begin{eqnarray}
\sum_{J\in\JJ}\Exp \left[\phi^J(\vsubset{h}{\Srv}) \;|\; \eta_J = k\right]
&\stackrel{\eqref{eq:87683893}}{\leq}&  \tfrac{1}{\omega}
    \sum_{J\in\JJ} \sum_{i \in J}
          \phi^J \left(  k \U_i \vc{h}{i} \right)
=  \tfrac{1}{\omega}
    \sum_{J\in\JJ}     \sum_{i=1}^n
          \phi^J \left(  k \U_i \vc{h}{i} \right) \notag
\\
&=&  \tfrac{1}{\omega}
    \sum_{i=1}^n \sum_{J\in\JJ}
          \phi^J \left( k \U_i \vc{h}{i} \right)
=  \tfrac{1}{\omega}
    \sum_{i=1}^n
          \phi \left(  k \U_i \vc{h}{i} \right)\notag
\\ &\stackrel{\eqref{eq:987980980}}{\leq}&   \tfrac{1}{\omega}\sum_{i=1}^n \tfrac{\Lip_i}{2}  \|kh^{(i)}\|_{(i)}^2 = \tfrac{k^2}{2\omega}  \|h\|_L^2.\label{eq:8488dd8}
\end{eqnarray}
Finally, \eqref{eq:aksoio4209809878} follows after plugging \eqref{eq:8488dd8} into \eqref{eq:8893298d9}:
\[
 \Exp \left[\phi(\vsubset{h}{\Srv})\right]
\leq \sum_{k} \Prob(\eta_J = k) \tfrac{k^2}{2\omega} \|h\|_L^2
= \tfrac{1}{2\omega} \|h\|_L^2 \E [|J\cap \Srv|^2]
\stackrel{\eqref{eq:00sjs738}}{=}  \tfrac{\tau}{2 n}
 \left(1 + \tfrac{ (\omega-1)( \tau-1 )   }{\max(1, n-1) } \right)\|h\|_L^2. \qquad \qed
\]

\subsection{Doubly uniform samplings} \label{sec:DUsamplings}

We are now ready, using a bootstrapping argument, to formulate and prove a result covering all doubly uniform samplings.

\begin{theorem}\label{thm:ESO-DU} If  $\Srv$ is a (proper) doubly uniform sampling, then 
\begin{align}\label{eq:inequalitys8sjs8sjs8}
 (f,\Srv) \sim ESO\left(1+
      \frac{ (\omega-1)\left(\frac{\Exp[|\Srv|^2]}{\Exp[|\Srv|]}-1\right)}{\max(1,n-1)},L\right).
\end{align}

\end{theorem}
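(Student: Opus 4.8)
The plan is to \emph{bootstrap} from the ESO for nice samplings (Theorem~\ref{thm:ESO-nice}) using the convex-combination machinery of Theorem~\ref{thm:convex_comb_samplings}, exactly as the phrase ``using a bootstrapping argument'' suggests. First I would set $q_k \eqdef \Prob(|\Srv|=k)$ and observe that, because $\Srv$ is doubly uniform, conditioning on the event $|\Srv|=k$ produces the $k$-nice sampling; equivalently (cf.\ the remarks after Lemma~\ref{lem:double_stuff}) $\Srv = \sum_{k=0}^n q_k \Srv^{(k)}$, where $\Srv^{(k)}$ denotes the $k$-nice sampling and $\Srv^{(0)}$ is the nil sampling. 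Since $\Srv$ is proper it is not nil, so $\E[|\Srv|]=\sum_k k q_k>0$; this matters because $\E[|\Srv|]$ will end up in a denominator.

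Next I would invoke Theorem~\ref{thm:ESO-nice} for each $k$-nice sampling with $k\geq 1$, giving $(f,\Srv^{(k)})\sim ESO(\beta_k,L)$ with $\beta_k = 1+\tfrac{(\omega-1)(k-1)}{\max(1,n-1)}$. The index $k=0$ needs only a one-line remark: the nil sampling satisfies \eqref{eq:general_form_for_expectation} trivially (both sides equal $f(x)$) for any $\beta_0>0$ and $w_0=L$, and since $\E[|\Srv^{(0)}|]=0$ this term drops out of every sum below, so the proof of Theorem~\ref{thm:convex_comb_samplings} applies verbatim to the family $\Srv^{(0)},\dots,\Srv^{(n)}$ with weights $q_0,\dots,q_n$. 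That theorem then yields
\[
(f,\Srv)\sim ESO\left(\tfrac{1}{\sum_k q_k k},\ \big(\textstyle\sum_k q_k k\,\beta_k\big)\,L\right).
\]

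It then remains to simplify. Using $\sum_k q_k k = \E[|\Srv|]$ and $\sum_k q_k k(k-1) = \E[|\Srv|^2]-\E[|\Srv|]$, one computes
\[
\textstyle\sum_k q_k k\,\beta_k \;=\; \E[|\Srv|] + \tfrac{\omega-1}{\max(1,n-1)}\big(\E[|\Srv|^2]-\E[|\Srv|]\big).
\]
Plugging this in and invoking the ``reshuffling'' identity \eqref{eq:ESO_shift} to move the scalar $\sum_k q_k k\,\beta_k$ from $w$ back into $\beta$ (which divides it by $\E[|\Srv|]$) gives $(f,\Srv)\sim ESO\big(1+\tfrac{(\omega-1)(\E[|\Srv|^2]/\E[|\Srv|]-1)}{\max(1,n-1)},\,L\big)$, which is the claim.

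Once Theorems~\ref{thm:ESO-nice} and~\ref{thm:convex_comb_samplings} are in hand the argument is essentially bookkeeping, so I do not expect a genuine obstacle; the only points that demand care are (i) the degenerate contribution of the nil sampling when $q_0>0$, together with the observation that properness forces $\E[|\Srv|]>0$, and (ii) carrying out the algebraic simplification of $\sum_k q_k k\,\beta_k$ and the subsequent reshuffling without error. As a sanity check the resulting formula reproduces the relevant rows of Table~\ref{tbl:ESO}: for a $\tau$-nice sampling $\E[|\Srv|^2]/\E[|\Srv|]=\tau$, for the serial sampling it equals $1$ (so $\beta=1$), and for the fully parallel sampling it equals $n$ (so $\beta=\omega$).
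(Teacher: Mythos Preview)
Your proposal is correct and is essentially the approach taken in the paper; the only cosmetic difference is that the paper's primary argument conditions directly on $|\Srv|$ (writing $\E[f(x+h_{[\Srv]})]=\sum_k q_k \E[f(x+h_{[\Srv]})\mid |\Srv|=k]$ and applying Theorem~\ref{thm:ESO-nice} termwise) rather than routing through Theorem~\ref{thm:convex_comb_samplings}, but the paper explicitly notes your route as an equivalent alternative.
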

\begin{proof} Letting $q_k = \Prob(|\Srv| = k)$ and $d = \max\{1,n-1\}$, we have
\begin{eqnarray*}\label{eq:inequality090u9dn8}
 \Exp \left[f(x+\vsubset{h}{\Srv})\right]
&=&\Exp\left[ \Exp \left[f(x+\vsubset{h}{\Srv}) \;|\; |\Srv|  \right]\right] = \sum_{k=0}^n q_k \Exp \left[f(x+\vsubset{h}{\Srv}) \;|\; |\Srv| = k \right]\\
&\stackrel{\eqref{eq:inequalityToGetZeta}}{\leq}& \sum_{k=0}^n q_k \left[ f(x)+ \tfrac{k}{n} \left( \ve{\nabla f(x)}{ h}
 + \tfrac{1}{2}\left(1+
      \tfrac{ (\omega-1)(k-1)}{d}
  \right)\|h\|_L^2 \right)\right]\\
  &=& f(x) + \tfrac{1}{n}\sum_{k=0}^n q_k k \ve{\nabla f(x)}{h}  + \tfrac{1}{2n}\sum_{k=1}^n q_k \left[k\left(1-\tfrac{\omega-1}{d}\right) + k^2 \tfrac{\omega-1}{d}
 \right]\|h\|_L^2\\
&=& f(x) + \tfrac{\Exp[|\Srv|]}{n} \ve{\nabla f(x)}{h} + \tfrac{1}{2n}\left(\Exp [|\Srv|]\left(1-\tfrac{\omega-1}{d}\right) +  \Exp [|\Srv|^2] \tfrac{\omega-1}{d} \right)\|h\|_L^2.
\end{eqnarray*}
This theorem could have alternatively been proved by writing $\Srv$ as a convex combination of nice samplings and applying Theorem~\ref{thm:convex_comb_samplings}.
\end{proof}

\noindent Note that Theorem~\ref{thm:ESO-DU} reduces to that of Theorem~\ref{thm:ESO-nice} in the special case of a nice sampling, and gives the same result as Theorem~\ref{thm:ESO-nonoverlapping} in the case of the serial and fully parallel samplings.



\section{Iteration Complexity} \label{SEC:Iteration_Complexity}


In this section we prove two iteration complexity theorems\footnote{The development is similar to that in \cite{RT:UCDC} for the \emph{serial} block coordinate descent method, in the composite case. However, the results are vastly different.}. The first result (Theorem~\ref{thm:complexity-convex-case}) is for non-strongly-convex $F$ and covers PCDM2 with no restrictions  and PCDM1 only in the case when a  monotonic ESO is used. The second result (Theorem~\ref{thm:complexity-strongly-convex-case})  is for strongly convex $F$ and covers PCDM1 without any monotonicity restrictions.

Let us first establish two auxiliary results.

\begin{lemma}\label{lem:GGG67} For all $x\in \dom F$, $H_{\beta,w}(x,h(x)) \leq \min_{y\in \R^N} \{F(y) + \tfrac{\beta-\mu_f(w)}{2}\|y-x\|_w^2\}$.
\end{lemma}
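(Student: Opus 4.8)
The statement follows from a one-line comparison argument combined with the strong convexity bound on $f$. Since $h(x)$ is by definition the \emph{minimizer} of $h\mapsto H_{\beta,w}(x,h)$ over all of $\R^\N$, for any $y\in \R^\N$ I may plug in the particular choice $h = y-x$ to get
\[H_{\beta,w}(x,h(x)) \;\leq\; H_{\beta,w}(x,y-x) \;=\; f(x) + \ve{\nabla f(x)}{y-x} + \tfrac{\beta}{2}\|y-x\|_w^2 + \cPsi(y).\]

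The next step is to eliminate the first two terms on the right using strong convexity of $f$ with respect to $\|\cdot\|_w$ (recall that $\mu_f(w)\geq 0$ always, and the bound is vacuous but still valid when $\mu_f(w)=0$). Applying \eqref{eq:strong_def} with the roles of $x$ and $y$ as written there gives $f(x) + \ve{\nabla f(x)}{y-x} \leq f(y) - \tfrac{\mu_f(w)}{2}\|y-x\|_w^2$. Substituting this into the displayed inequality yields
\[H_{\beta,w}(x,h(x)) \;\leq\; f(y) + \cPsi(y) - \tfrac{\mu_f(w)}{2}\|y-x\|_w^2 + \tfrac{\beta}{2}\|y-x\|_w^2 \;=\; F(y) + \tfrac{\beta - \mu_f(w)}{2}\|y-x\|_w^2,\]
where I used $F = f + \cPsi$.

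Finally, since the left-hand side does not depend on $y$, I take the infimum over $y\in \R^\N$ (a minimum is attained when $F$ is bounded below, as assumed) to obtain the claimed inequality. There is no real obstacle here; the only point worth a remark is that the inequality $f(x) + \ve{\nabla f(x)}{y-x} \leq f(y) - \tfrac{\mu_f(w)}{2}\|y-x\|_w^2$ is exactly \eqref{eq:strong_def} and requires only convexity (not strong convexity) to hold in the degenerate case $\mu_f(w)=0$, so the statement is valid in full generality.
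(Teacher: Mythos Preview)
Your proof is correct and follows essentially the same approach as the paper: both use that $h(x)$ minimizes $H_{\beta,w}(x,\cdot)$, substitute $h=y-x$, and then replace $f(x)+\ve{\nabla f(x)}{y-x}$ by $f(y)-\tfrac{\mu_f(w)}{2}\|y-x\|_w^2$ via \eqref{eq:strong_def}. The only cosmetic difference is that the paper carries the $\min_y$ through the chain of inequalities, whereas you fix $y$ first and minimize at the end.
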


\noindent {\em Proof.}
\begin{eqnarray}
\notag H_{\beta,w}(x,h(x))  \stackrel{\eqref{eq:h(x)}}{=}  \min_{y\in\R^{\N}} H_{\beta,w}(x,y-x)  &=&  \min_{y\in \R^{\N}} f(x)+ \ve{\nabla f(x)}{y-x} + \cPsi(y)+\tfrac{\beta}{2} \|y-x\|_w^2\\
\notag & \stackrel{\eqref{eq:strong_def}}{\leq} & \min_{y\in \R^{\N}}   f(y) -  \tfrac{\mu_f(w)}{2}\|y-x\|_w^2 + \cPsi(y)+\tfrac{\beta}{2} \|y-x\|_w^2. \hfill \qed
\end{eqnarray}

\begin{lemma}\label{lem:H-Fstar}
\begin{itemize}
\item[(i)] Let $x^*$ be an optimal solution of \eqref{eq:P}, $x\in \dom F$ and let $R = \|x-x^*\|_w$. Then
\begin{equation}\label{eq:nonsmooth:gamma1}
 H_{\beta,w}(x,h(x)) - F^* \leq \begin{cases} \left(1-\tfrac{F(x)-F^*}{2\beta R^2}\right)(F(x)-F^*), \quad & \text{if } F(x)-F^*\leq \beta R^2,\\
\tfrac{1}{2} \beta R^2 < \tfrac{1}{2}(F(x)-F^*), \quad & \text{otherwise.}
\end{cases}
\end{equation}
\item[(ii)] If $\mu_f(w) + \mu_\cPsi(w) > 0$ and $\beta\geq \mu_f(w)$, then for all $x\in \dom F$,
\begin{equation}\label{eq:nonsmooth:gammaxiUsage}
 H_{\beta,w}(x,h(x)) - F^* \leq
      \frac{\beta-\mu_f(w)}{\beta+\mu_\cPsi(w)}
 (F(x)-F^*).
\end{equation}
\end{itemize}
\end{lemma}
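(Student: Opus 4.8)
The plan is to prove both parts the same way: invoke Lemma~\ref{lem:GGG67} and then, instead of carrying out the full minimization on its right-hand side, restrict it to the line segment joining $x$ to an optimal solution $x^*$ of \eqref{eq:P}, i.e. to the points $y_\lambda \eqdef (1-\lambda)x + \lambda x^*$ with $\lambda \in [0,1]$, for which $\|y_\lambda - x\|_w = \lambda R$.

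For part (i), since $\mu_f(w)\geq 0$ I would first weaken Lemma~\ref{lem:GGG67} to $H_{\beta,w}(x,h(x)) \leq \min_{y}\{F(y) + \tfrac{\beta}{2}\|y-x\|_w^2\}$. Substituting $y=y_\lambda$ and using convexity of $F$ (so $F(y_\lambda)\leq (1-\lambda)F(x)+\lambda F^*$), then subtracting $F^*$, gives
\[ H_{\beta,w}(x,h(x)) - F^* \leq (1-\lambda)(F(x)-F^*) + \tfrac{\beta\lambda^2}{2}R^2 \qquad \text{for all } \lambda\in[0,1]. \]
(If $R=0$ then $x=x^*$, $F(x)=F^*$, and the bound is trivial since $H_{\beta,w}(x,h(x))\leq H_{\beta,w}(x,0)=F(x)$.) It then remains to minimize the right-hand side over $\lambda\in[0,1]$: the unconstrained minimizer is $\lambda^\star=\tfrac{F(x)-F^*}{\beta R^2}$; if $\lambda^\star\leq 1$, i.e. $F(x)-F^*\leq \beta R^2$, plugging it in produces the first branch of \eqref{eq:nonsmooth:gamma1}, while if $\lambda^\star>1$ the quadratic is decreasing on $[0,1]$, so $\lambda=1$ yields $H_{\beta,w}(x,h(x))-F^*\leq \tfrac{\beta}{2}R^2$, which is strictly less than $\tfrac12(F(x)-F^*)$ in that regime since there $\beta R^2<F(x)-F^*$. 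This is a routine one-variable optimization.

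For part (ii) I would keep the full strength of Lemma~\ref{lem:GGG67}, $H_{\beta,w}(x,h(x))\leq \min_y\{F(y)+\tfrac{\beta-\mu_f(w)}{2}\|y-x\|_w^2\}$ (the quadratic coefficient is nonnegative because $\beta\geq\mu_f(w)$), again restrict to $y=y_\lambda$, but now use strong convexity of $f$ and of $\cPsi$ separately via the characterization \eqref{eq:strong_2}:
\[ f(y_\lambda)\leq (1-\lambda)f(x)+\lambda f(x^*)-\tfrac{\mu_f(w)\lambda(1-\lambda)}{2}R^2, \qquad \cPsi(y_\lambda)\leq (1-\lambda)\cPsi(x)+\lambda\cPsi(x^*)-\tfrac{\mu_\cPsi(w)\lambda(1-\lambda)}{2}R^2. \]
Adding these, using $\|y_\lambda-x\|_w^2=\lambda^2R^2$ and subtracting $F^*$, the coefficient of $\tfrac{R^2}{2}$ collapses to $(\beta+\mu_\cPsi(w))\lambda^2-(\mu_f(w)+\mu_\cPsi(w))\lambda$. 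The one nonobvious step is to then choose $\lambda=\tfrac{\mu_f(w)+\mu_\cPsi(w)}{\beta+\mu_\cPsi(w)}$, which makes this coefficient vanish identically and which lies in $(0,1]$ exactly because $\mu_f(w)+\mu_\cPsi(w)>0$ (numerator positive) and $\mu_f(w)\leq\beta$ (so $\lambda\leq 1$). For this $\lambda$ what remains is $H_{\beta,w}(x,h(x))-F^*\leq(1-\lambda)(F(x)-F^*)=\tfrac{\beta-\mu_f(w)}{\beta+\mu_\cPsi(w)}(F(x)-F^*)$, i.e. \eqref{eq:nonsmooth:gammaxiUsage}.

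The main obstacle, such as it is, is the realization in (ii) that the segment parameter $\lambda$ should be picked to annihilate the $\|y-x\|_w^2$ term rather than to optimize the bound: without this choice the estimate keeps an $R^2$-term and is useless. Everything else is bookkeeping — checking that $y_\lambda\in\dom F$ by convexity and that all function values are finite since $x,x^*\in\dom F$, verifying the sign of the quadratic coefficient in (i), and handling the boundary case $R=0$.
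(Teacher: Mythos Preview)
Your proposal is correct and follows essentially the same route as the paper: invoke Lemma~\ref{lem:GGG67}, restrict the minimization to the segment $y_\lambda=(1-\lambda)x+\lambda x^*$, use (strong) convexity to bound $F(y_\lambda)$, and then in (i) minimize the resulting quadratic in $\lambda$ while in (ii) pick $\lambda=\tfrac{\mu_f(w)+\mu_\cPsi(w)}{\beta+\mu_\cPsi(w)}$ precisely so that the $R^2$-term vanishes. The only cosmetic difference is that in (ii) the paper applies \eqref{eq:strong_2} directly to $F$ via \eqref{eq:mu_F}, whereas you apply it to $f$ and $\cPsi$ separately and add; the resulting inequality is identical.
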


\begin{proof} Part (i): Since we do not assume strong convexity, we have $\mu_f(w) = 0$, and hence
\begin{eqnarray*}
\notag H_{\beta,w}(x,h(x)) &\overset{(\text{Lemma~\ref{lem:GGG67}})}{\leq} & \min_{y\in \R^{\N}}  \{ F(y) + \tfrac{\beta}{2} \|y-x\|_w^2\}\\
\notag &\leq & \min_{\lambda \in [0,1]}   \{F(\lambda x^* + (1-\lambda)x) + \tfrac{\beta \lambda^2}{2} \|x-x^*\|_w^2\}\\
\label{eq:R2} &\leq & \min_{\lambda \in [0,1]}\{ F(x)-\lambda (F(x)-F^*)+ \tfrac{\beta \lambda^2}{2} R^2\}.
\end{eqnarray*}
Minimizing the last expression in $\lambda$ gives $\lambda^* = \min\left\{1,(F(x)-F^*)/({\beta R^2})\right\}$; the result follows.
Part (ii): Letting $\mu_f = \mu_f(w)$, $\mu_\cPsi = \mu_\cPsi(w)$ and $\lambda^* = (\mu_f+\mu_\cPsi)/(\beta+\mu_\cPsi)\leq 1$, we have
\begin{eqnarray}
\notag H_{\beta,w}(x,h(x))  & \overset{(\text{Lemma~\ref{lem:GGG67}})}{\leq} & \min_{y\in \R^{\N}}   \{F(y) + \tfrac{\beta-\mu_f}{2} \|y-x\|_w^2\}\\
\notag & \leq & \min_{\lambda \in [0,1]} \{  F(\lambda x^* + (1-\lambda)x) + \tfrac{(\beta-\mu_f)\lambda^2}{2} \|x-x^*\|_w^2\}\\
\notag & \overset{\eqref{eq:strong_2}+\eqref{eq:mu_F}}{\leq} & \min_{\lambda \in [0,1]} \{\lambda F^* + (1-\lambda) F(x) - \tfrac{(\mu_f + \mu_\cPsi)\lambda(1-\lambda)-(\beta-\mu_f)\lambda^2}{2}\|x-x^*\|_w^2\}\\
\label{eq:asuv1398f0a0sdfa}
\notag & \leq & F(x) - \lambda^*(F(x)-F^*).
\end{eqnarray}
The last inequality follows from the identity $(\mu_f+\mu_\cPsi)(1-\lambda^*) - (\beta-\mu_f)\lambda^* = 0$.
\end{proof}

We could have formulated part (ii) of the above result using the weaker assumption $\mu_F(w)>0$, leading to a slightly stronger result. However, we prefer the above treatment as it gives more insight.

\subsection{Iteration complexity: convex case}\label{sec:Complexity-convex}

The following lemma will be used to finish off the proof of the complexity result of this section.

\begin{lemma} [Theorem 1 in \cite{RT:UCDC}] \label{l:randomVariableTrick}
Fix $x_0\in \R^N$ and let $\{x_k\}_{k\geq 0}$ be a sequence of random vectors in $\R^N$ with $x_{k+1}$ depending on $x_k$ only. Let $\phi:\R^N\to \R$ be a nonnegative function and define $\xi_k = \phi(x_k)$. Lastly, choose accuracy level $0<\epsilon<\xi_0$, confidence level $0 < \rho< 1$,  and assume that the sequence of random variables $\{\xi_k\}_{k\geq 0}$ is nonincreasing and has one of the following properties:
\begin{enumerate}
\item[(i)] $\E[\xi_{k+1} \;|\; x_k] \leq (1 - \tfrac{\xi_k}{c_1})\xi_k $, for all $k$, where $c_1>\epsilon$ is a constant,
\item[(ii)] $\E[\xi_{k+1} \;|\; x_k] \leq (1-\tfrac{1}{c_2}) \xi_k$, for all $k$ such that $\xi_k\geq \epsilon$, where $c_2>1$ is  a constant.
\end{enumerate}
If property (i) holds and we choose $K \geq 2 + \tfrac{c_1}{\epsilon} (1 - \tfrac{\epsilon}{\xi_0} + \log (\tfrac{1}{\rho}))$, or if property (ii) holds, and we choose
$K\geq c_2 \log (\tfrac{\xi_0}{\epsilon \rho})$, then $\Prob(\xi_K \leq \epsilon) \geq 1-\rho$.
\end{lemma}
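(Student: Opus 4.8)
The plan is to handle the two hypotheses separately, but in both cases to reduce the statement to a geometric-decay estimate for the \emph{truncated} quantity $\zeta_k \eqdef \xi_k \mathbf{1}_{B_k}$, where $B_k \eqdef \{\xi_k \geq \epsilon\}$, followed by Markov's inequality. Two structural facts drive everything. First, since $\{\xi_k\}$ is nonincreasing, the events $B_k$ are nested, $B_0 \supseteq B_1 \supseteq \cdots$, so $\zeta_{k+1} = \xi_{k+1}\mathbf{1}_{B_{k+1}} \leq \xi_{k+1}\mathbf{1}_{B_k}$. Second, $\mathbf{1}_{B_k}$ is a function of $x_k$, so it pulls out of $\E[\cdot \,|\, x_k]$. (I will also dispose of the trivial cases: if $\E[\xi_k]=0$ for some $k$ then $\xi_k\equiv 0$ and monotonicity finishes it; and hypothesis (i) at $k=0$ forces $\xi_0 \leq c_1$, which I will note is all that is needed.)

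\textbf{Case (ii).} On $B_k$ the hypothesis gives $\E[\xi_{k+1}\,|\,x_k] \leq (1-\tfrac1{c_2})\xi_k$, hence $\E[\zeta_{k+1}] \leq \E[\xi_{k+1}\mathbf{1}_{B_k}] = \E[\mathbf{1}_{B_k}\E[\xi_{k+1}\,|\,x_k]] \leq (1-\tfrac1{c_2})\E[\zeta_k]$. Iterating and using $\zeta_0 \leq \xi_0$ gives $\E[\zeta_K] \leq (1-\tfrac1{c_2})^K \xi_0$. On $B_K$ we have $\zeta_K = \xi_K \geq \epsilon$, so Markov's inequality yields $\Prob(\xi_K \geq \epsilon) = \Prob(B_K) \leq \E[\zeta_K]/\epsilon \leq (1-\tfrac1{c_2})^K \xi_0/\epsilon$. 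The elementary bound $-\log(1-t) \geq t$ for $t\in(0,1)$ makes the right-hand side $\leq \rho$ as soon as $K \geq c_2 \log(\xi_0/(\epsilon\rho))$, which is the claim.

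\textbf{Case (i).} Here $\E[\xi_{k+1}\,|\,x_k] \leq (1-\xi_k/c_1)\xi_k$ is only contractive once $\xi_k \geq \epsilon$, so I split the run into two phases. \emph{Phase 1:} with $a_k \eqdef \E[\xi_k]$, Jensen ($\E[\xi_k^2] \geq a_k^2$) turns the hypothesis into $a_{k+1} \leq a_k - a_k^2/c_1$; then $\tfrac1{a_{k+1}} - \tfrac1{a_k} = \tfrac{a_k-a_{k+1}}{a_k a_{k+1}} \geq \tfrac{a_k}{c_1 a_{k+1}} \geq \tfrac1{c_1}$ (using $a_k \geq a_{k+1}$) telescopes to $a_{K_0} \leq \tfrac{c_1\xi_0}{c_1+K_0\xi_0}$, which is $\leq \epsilon$ once $K_0 \geq \tfrac{c_1}{\epsilon}(1-\tfrac{\epsilon}{\xi_0})$; fix the least such integer $K_0 \leq 1 + \tfrac{c_1}{\epsilon}(1-\tfrac{\epsilon}{\xi_0})$. \emph{Phase 2:} for $k \geq K_0$, on $B_k$ we have $1-\xi_k/c_1 \leq 1-\epsilon/c_1$, so exactly as in case (ii), $\E[\zeta_{k+1}] \leq (1-\epsilon/c_1)\E[\zeta_k]$; iterating from $K_0$ to $K$ and using $\E[\zeta_{K_0}] \leq a_{K_0} \leq \epsilon$ gives $\E[\zeta_K] \leq (1-\epsilon/c_1)^{K-K_0}\epsilon$, whence $\Prob(\xi_K \geq \epsilon) \leq (1-\epsilon/c_1)^{K-K_0}$. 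Again via $-\log(1-t)\geq t$, this is $\leq \rho$ once $K - K_0 \geq \tfrac{c_1}{\epsilon}\log\tfrac1\rho$; combined with the bound on $K_0$, the requirement $K \geq 2 + \tfrac{c_1}{\epsilon}(1-\tfrac{\epsilon}{\xi_0}+\log\tfrac1\rho)$ suffices, the ``$+2$'' absorbing the two roundings.

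\textbf{Main obstacle.} The only genuinely delicate point is the phase split in case (i): applying Markov directly to the mean bound $a_k \lesssim c_1/k$ gives only $\Prob(\xi_k \geq \epsilon) \lesssim c_1/(k\epsilon)$, i.e.\ a $1/\rho$ rather than $\log(1/\rho)$ dependence, so the sublinear phase must be used \emph{only} to drive the mean to level $\epsilon$, after which the conditional hypothesis restricted to $B_k$ is genuinely contractive at rate $1-\epsilon/c_1$. Arranging the truncation $\zeta_k = \xi_k\mathbf{1}_{B_k}$ together with the nestedness of the $B_k$ so that these two estimates chain together cleanly is the crux; the remaining ingredients (Jensen, the reciprocal telescoping, $-\log(1-t)\geq t$, Markov) are routine.
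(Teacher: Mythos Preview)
The paper does not supply its own proof of this lemma; it is quoted as Theorem~1 of \cite{RT:UCDC} and used as a black box in the proof of Theorem~\ref{thm:complexity-convex-case}. Your argument is correct and is essentially the standard one behind that cited result. The truncation $\zeta_k = \xi_k\mathbf{1}_{\{\xi_k \geq \epsilon\}}$, combined with the nestedness of the events $\{\xi_k \geq \epsilon\}$ guaranteed by monotonicity, converts hypothesis~(ii) into a clean geometric decay of $\E[\zeta_k]$, and Markov's inequality finishes. For hypothesis~(i), your two-phase split---first the reciprocal telescoping $1/a_{k+1}-1/a_k \geq 1/c_1$ to push the mean below $\epsilon$, then the truncation argument with contraction rate $1-\epsilon/c_1$---is precisely the device that upgrades the confidence dependence from $1/\rho$ to $\log(1/\rho)$; you identify this as the crux, and rightly so. One cosmetic remark: only a single ceiling (for $K_0$) is actually incurred, so ``$+1$'' already suffices and the stated ``$+2$'' holds with slack; your reference to ``two roundings'' is harmless but slightly overcounts.
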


This lemma  was recently extended in \cite{TRG:InexactCDM} so as to aid the analysis of a \emph{serial} coordinate descent method with \emph{inexact} updates, i.e., with $h(x)$ chosen as an \emph{approximate} rather than exact minimizer of $H_{1,\Lip}(x,\cdot)$ (see \eqref{eq:h(x)}). While in this paper we deal with exact updates only, the results can be extended to the inexact case.


\begin{theorem} \label{thm:complexity-convex-case}
 Assume that $(f,\Srv) \sim ESO(\beta,w)$, where  $\Srv$ is a proper uniform sampling, and let $\alpha = \tfrac{\E[|\Srv|]}{n}$.  Choose  $x_0\in \dom F$ satisfying
 \begin{equation}\label{eq:R(x_0)}\Rw{w}{x_0,x^*} \eqdef \max_x  \{\|x-x^*\|_w \;:\; F(x) \leq F(x_0)\} < +\infty,\end{equation}
where $x^*$ is an optimal point of \eqref{eq:P}. Further, choose target confidence level $0<\rho<1$, target accuracy level $\epsilon>0$ and iteration counter $K$ in any of the following two ways:
\begin{enumerate}
\item[(i)] $\epsilon<F(x_0)-F^*$  and
\begin{equation}\label{eq:k_composite_nonstrong} K \geq 2 + \frac{2\left(\tfrac{\beta}{\alpha}\right)\max\left\{\Rws{w}{x_0,x^*}, \tfrac{F(x_0)-F^*}{\beta}\right\}}{\epsilon} \left(1  - \frac{\epsilon}{F(x_0)-F^*} + \log \left(\frac{1}{\rho}\right)\right),\end{equation}
\item[(ii)] $\epsilon < \min\{2\left(\tfrac{\beta}{\alpha}\right)\Rws{w}{x_0,x^*}, F(x_0)-F^*\}$ and
\begin{equation}\label{eq:k_composite_nonstrong2} K \geq \frac{2  \left(\tfrac{\beta}{\alpha}\right)\Rws{w}{x_0,x^*}}{\epsilon} \log \left(\frac{F(x_0)-F^*}{\epsilon\rho }\right).\end{equation}
\end{enumerate}
If $\{x_k\}$, $k\geq 0$, are the random iterates of PCDM (use PCDM1 if the ESO is monotonic, otherwise use PCDM2), then
$\Prob(F(x_K)-F^*\leq \epsilon) \geq 1-\rho$.
\end{theorem}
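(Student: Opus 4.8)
The plan is to track the sequence of suboptimality gaps $\xi_k \eqdef F(x_k) - F^* \ge 0$ and to feed the recursion it satisfies into Lemma~\ref{l:randomVariableTrick}.

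First I would establish that $\{\xi_k\}_{k\ge 0}$ is nonincreasing with probability $1$. For PCDM2 this is immediate from Step~5; for PCDM1 it holds because the hypothesis here is that the ESO is monotonic, i.e.\ $F(x_k+(h(x_k))_{[\Srv]})\le F(x_k)$ almost surely. Monotonicity forces every iterate into the sublevel set $\{x:F(x)\le F(x_0)\}$, so by the finiteness assumption \eqref{eq:R(x_0)} we get $\|x_k-x^*\|_w\le R_w\eqdef \Rw{w}{x_0,x^*}<\infty$ and $\xi_k\le\xi_0$ for all $k$.

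Next I would derive the one-step inequality. Applying \eqref{eq:1} (which holds for both variants via \eqref{eq:ineq888}) with $h=h(x_k)$, and using that $h(x_k)$ minimizes $H_{\beta,w}(x_k,\cdot)$, I subtract $F^*$ to obtain $\E[\xi_{k+1}\mid x_k]\le (1-\alpha)\xi_k+\alpha\big(H_{\beta,w}(x_k,h(x_k))-F^*\big)$. Since strong convexity is not assumed, $\mu_f(w)=0$ and Lemma~\ref{lem:H-Fstar}(i) applies with $R=\|x_k-x^*\|_w\le R_w$; replacing $R$ by $R_w$ in its first branch and using the strict $\tfrac12$-bound in its second, both cases collapse to
\[\E[\xi_{k+1}\mid x_k]\;\le\;\xi_k-\tfrac{\alpha}{2}\min\Big\{\tfrac{\xi_k^2}{\beta R_w^2},\,\xi_k\Big\}.\]
Then I would cast this into the two alternatives of Lemma~\ref{l:randomVariableTrick}. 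For (i): because $\xi_k\le\xi_0$, one has $\min\{\xi_k^2/(\beta R_w^2),\xi_k\}\ge \xi_k^2/\big(\beta\max\{R_w^2,\xi_0/\beta\}\big)$, so hypothesis (i) holds with $c_1=2(\beta/\alpha)\max\{R_w^2,(F(x_0)-F^*)/\beta\}$, which exceeds $\epsilon$; the lemma then gives precisely \eqref{eq:k_composite_nonstrong}. For (ii): whenever $\xi_k\ge\epsilon$ the same minimum is at least $\epsilon\xi_k/(\beta R_w^2)$, so hypothesis (ii) holds with $c_2=2(\beta/\alpha)R_w^2/\epsilon$, which is $>1$ by the added assumption $\epsilon<2(\beta/\alpha)R_w^2$; the lemma then gives \eqref{eq:k_composite_nonstrong2}. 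In both cases we conclude $\Prob(\xi_K\le\epsilon)\ge1-\rho$.

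The main obstacle I anticipate is the bookkeeping in the one-step estimate: Lemma~\ref{lem:H-Fstar}(i) splits into two regimes, and merging them into a single recursion while keeping the constants sharp is exactly what produces the $\max\{R_w^2,(F(x_0)-F^*)/\beta\}$ factor and forces the auxiliary hypotheses on $\epsilon$ in the two cases. Moreover the whole argument rests on the uniform bound $\|x_k-x^*\|_w\le R_w$, which is only available because the iterates are monotone — this is why, in the merely convex case, one must use PCDM2 (or PCDM1 with a monotonic ESO), the non-monotone PCDM1 being treated separately in Theorem~\ref{thm:complexity-strongly-convex-case} under strong convexity.
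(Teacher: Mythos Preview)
Your proposal is correct and follows essentially the same route as the paper. The paper writes the one-step bound as $\E[\xi_{k+1}\mid x_k]\le \max\!\big\{1-\tfrac{\alpha\xi_k}{2\beta R_w^2},\,1-\tfrac{\alpha}{2}\big\}\xi_k$, which is algebraically identical to your $\xi_k-\tfrac{\alpha}{2}\min\{\xi_k^2/(\beta R_w^2),\xi_k\}$, and then feeds it into Lemma~\ref{l:randomVariableTrick} with exactly the same constants $c_1$ and $c_2$ you derived.
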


\begin{proof} Since either PCDM2 is used (which is monotonic) or otherwise the ESO is monotonic, we must have
$F(x_k)\leq F(x_0)$ for all $k$. In particular, in view of \eqref{eq:R(x_0)} this implies that $\|x_k-x^*\|_w \leq {\cal R}_w(x_0,x^*)$.
Letting  $\xi_k = F(x_k)-F^*$, we have
\begin{eqnarray}
  \E [\xi_{k+1} \;|\; x_k]
   &\overset{\eqref{eq:1} }{\leq} &    (1-\alpha) \xi_k + \alpha (H_{\beta,w}(x_k,h(x_k))-F^*) \notag \\
   &\overset{\eqref{eq:nonsmooth:gamma1}}{\leq}& (1-\alpha) \xi_k + \alpha \max\left\{1-\frac{\xi_k}{2\beta \|x_k-x^*\|_w^2}, \frac{1}{2}\right\}\xi_k \notag \\
   &=& \max\left\{1-\frac{\alpha \xi_k}{2\beta \|x_k-x^*\|_w^2}, 1 - \frac{\alpha}{2}\right\}\xi_k \notag\\
    &\leq& \max\left\{1-\frac{\alpha \xi_k}{2\beta {\cal R}^2_w(x_0,x^*)}, 1 - \frac{\alpha}{2}\right\}\xi_k.\label{eq:the_one}
\end{eqnarray}
Consider case (i) and let $c_1=2\tfrac{\beta}{\alpha}\max\{{\cal R}^2_w(x_0,x^*), \tfrac{\xi_0}{\beta}\}$. Continuing with \eqref{eq:the_one}, we then get
\[\E [\xi_{k+1} \;|\; x_k]  \leq (1-\tfrac{\xi_k}{c_1})\xi_k\] for all $k \geq 0$. Since $\epsilon<\xi_0 < c_1$, it suffices to apply Lemma~\ref{l:randomVariableTrick}(i). Consider now case (ii) and let $c_2 = 2\tfrac{\beta}{\alpha}\frac{{\cal R}^2_w(x_0,x^*)}{\epsilon}$. Observe now that whenever $\xi_k \geq \epsilon$, from \eqref{eq:the_one} we get
$\E [\xi_{k+1} \;|\; x_k]  \leq (1-\tfrac{1}{c_2})\xi_k$. By assumption, $c_2 > 1$, and hence it remains to apply Lemma~\ref{l:randomVariableTrick}(ii).
\end{proof}

The important message of the above theorem is that the iteration complexity of our methods in the convex case is $O(\tfrac{\beta}{\alpha}\tfrac{1}{\epsilon})$. Note that for the serial method (PCDM1 used with $\Srv$ being the serial sampling) we have $\alpha = \tfrac{1}{n}$ and $\beta = 1$ (see Table~\ref{tbl:ESO}), and hence $\tfrac{\beta}{\alpha} = n$. It will be interesting to study the \emph{parallelization speedup factor} defined by
\begin{equation}\label{eq:speedup_factor_C}\text{parallelization speedup factor} = \frac{\tfrac{\beta}{\alpha} \text { of the serial method}}{\tfrac{\beta}{\alpha} \text { of a parallel method}} = \frac{n}{\tfrac{\beta}{\alpha} \text { of a parallel method}}.\end{equation}

Table~\ref{tbl:upperbounds}, computed from the data in Table~\ref{tbl:ESO}, gives expressions for the parallelization speedup factors for PCDM based on a DU sampling (expressions for 4 special cases are given as well).

\begin{table}[ht!]
\centering
\begin{tabular}{|c|c|}
\hline
$\Srv$   & Parallelization speedup factor\\
  \hline
\phantom{-}  &        \\
doubly uniform   & $\frac{\E[|\Srv|]}{1 + \tfrac{(\omega-1)\left((\E[|\Srv|^2]/\E[|\Srv|])-1\right)}{\max(1,n-1)}}$ \\
\phantom{-}  &        \\
\hline
\phantom{-}  &        \\
$(\tau,p_b)$-binomial & $\frac{\tau}{\tfrac{1}{p_b}+ \tfrac{(\omega-1)(\tau-1)}{\max(1,n-1)}}$   \\
\phantom{-}  &       \\
$\tau$-nice & $\frac{\tau}{1+ \tfrac{(\omega-1)(\tau-1)}{\max(1,n-1)}}$   \\
\phantom{-}  &        \\
fully parallel & $\frac{n}{\omega}$   \\
\phantom{-}  &        \\
serial   & $1$  \\
\phantom{-}  &        \\
\hline
\end{tabular}
 \caption{Convex $F$: Parallelization speedup factors for DU samplings. The factors below the line are special cases of the general expression. Maximum speedup is naturally obtained by the fully parallel sampling: $\tfrac{n}{\omega}$.}
\label{tbl:upperbounds}
\end{table}

The speedup of the serial sampling (i.e., of the algorithm based on it) is 1 as we are comparing it to itself. On the other end of the spectrum is the fully parallel sampling with a speedup of $\tfrac{n}{\omega}$. If the degree of partial separability is small, then this factor will be high --- especially so if $n$ is huge, which is the domain we are interested in. This provides an affirmative answer to the research question stated in italics in the introduction.

Let us now look at the speedup factor in the case of a $\tau$-nice sampling. Letting $r= \tfrac{\omega-1}{\max(1,n-1)} \in [0,1]$ (degree of partial separability normalized), the speedup factor can be written as
\[s(r) = \frac{\tau}{1+ r(\tau-1)}.\]
Note that as long as $r\leq \tfrac{k-1}{\tau-1}\approx \tfrac{k}{\tau}$, the speedup factor will be at least $\tfrac{\tau}{k}$. Also note that $\max\{1,\tfrac{\tau}{\omega}\} \leq s(r)\leq \min\{\tau, \tfrac{n}{\omega}\}$. Finally, if a speedup of at least $s$ is desired, where $s\in [0,\tfrac{n}{\omega}]$, one needs to use at least $\frac{1-r}{1/s-r}$ processors. For illustration, in Figure~\ref{fig:speedup_xx} we plotted $s(r)$ for a few values of $\tau$. Note that for small values of $\tau$, the speedup is significant and can be as large as the number of processors (in the separable case). We wish to stress that in many applications $\omega$ will be a constant independent of $n$, which means that $r$ will indeed be very small in the huge-scale optimization setting.

\begin{figure}[!ht]
 \centering
\includegraphics[width=4in]{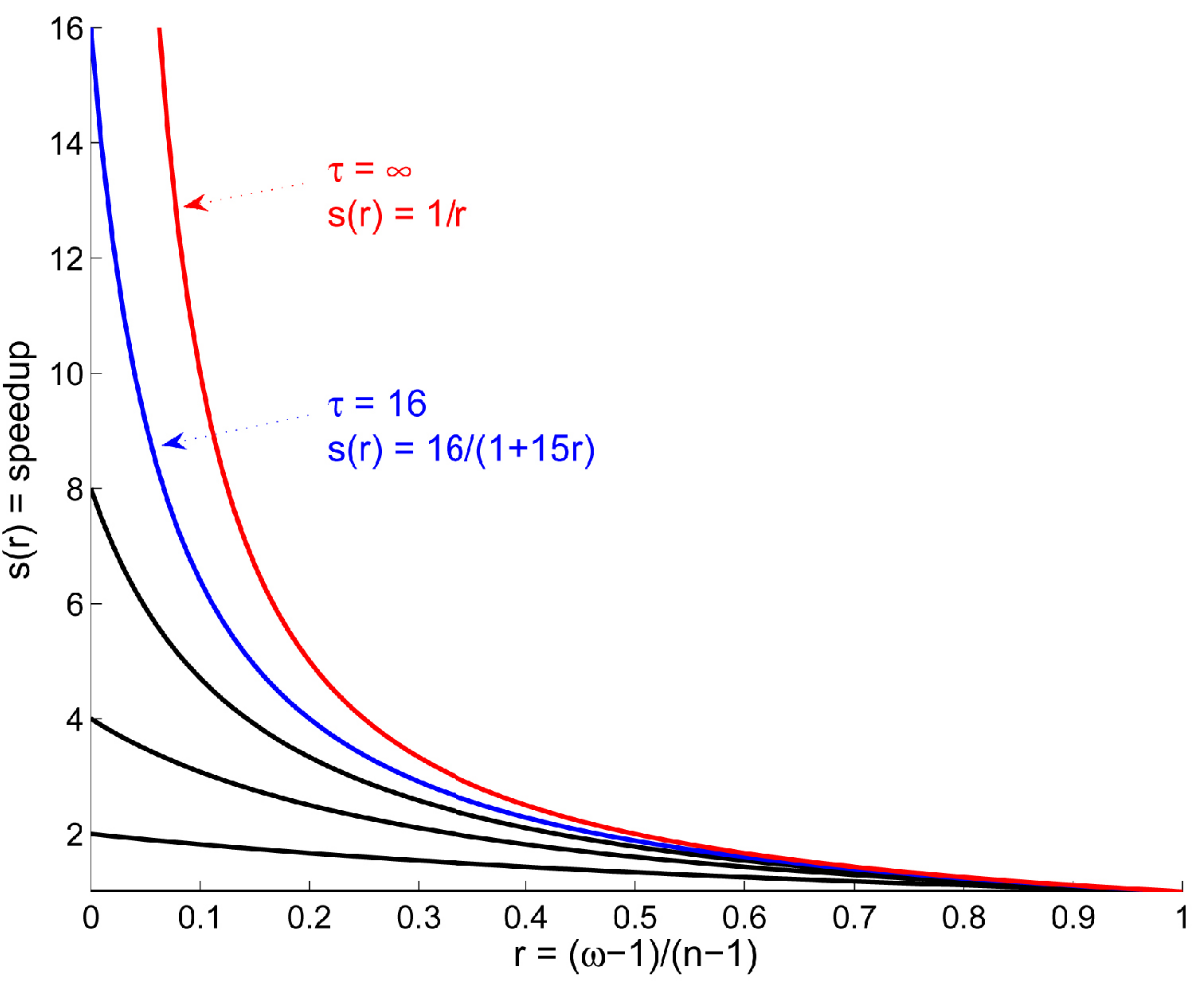}
 \caption{Parallelization speedup factor of PCDM1/PCDM2 used with $\tau$-nice sampling as a function of the normalized/relative degree of partial separability $r$.}
 \label{fig:speedup_xx}
\end{figure}

\subsection{Iteration complexity: strongly convex case}  \label{sec:composite_strong}

In this section we assume that $F$ is strongly convex with respect to the norm $\|\cdot\|_w$ and show that $F(x_k)$ converges to $F^*$ linearly, with high probability.

\begin{theorem} \label{thm:complexity-strongly-convex-case}
Assume $F$ is strongly convex with $\mu_f(w)+\mu_\cPsi(w)>0$. Further, assume $(f,\Srv) \sim ESO(\beta,w)$, where  $\Srv$ is a proper uniform sampling and let $\alpha = \tfrac{\E[|\Srv|]}{n}$.  Choose initial point $x_0\in \dom F$, target confidence level $0<\rho<1$, target accuracy level $0<\epsilon<F(x_0)-F^*$ and
\begin{equation}\label{eq:k_uniform_strong}
 K\geq
  \frac{1}{\alpha} \frac{\beta+\mu_\cPsi(w)}{\mu_f(w)+\mu_\cPsi(w)} \log \left(\frac{F(x_0)-F^*}{\epsilon\rho}\right).
\end{equation}
If $\{x_k\}$ are the random points generated by PCDM1 or PCDM2, then
$\Prob(F(x_K)-F^*\leq \epsilon) \geq 1-\rho$.
\end{theorem}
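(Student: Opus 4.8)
The plan is to reduce the strongly convex case to an application of Lemma~\ref{l:randomVariableTrick}(ii) with the function $\phi = F - F^*$. First I would note that, as in the proof of Theorem~\ref{thm:complexity-convex-case}, the key recursion comes from conditioning on $x_k$ and combining the ESO-based estimate \eqref{eq:1} with the control on $H_{\beta,w}$ furnished by Lemma~\ref{lem:H-Fstar}. Here, however, I would use part (ii) of Lemma~\ref{lem:H-Fstar} rather than part (i): since $F$ is strongly convex with $\mu_f(w)+\mu_\cPsi(w)>0$ and since $\beta\geq\mu_f(w)$ (this is the ``strong convexity'' remark \eqref{eq:beta-mu} following the ESO definition, using that $(f,\Srv)\sim ESO(\beta,w)$), the hypotheses of Lemma~\ref{lem:H-Fstar}(ii) are met, giving
\[
H_{\beta,w}(x_k,h(x_k)) - F^* \leq \frac{\beta-\mu_f(w)}{\beta+\mu_\cPsi(w)}\,(F(x_k)-F^*).
\]

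Next I would assemble the one-step contraction. Writing $\xi_k = F(x_k)-F^*$, $\alpha = \E[|\Srv|]/n$, and plugging the above into \eqref{eq:1} (which holds for both PCDM1 and PCDM2, being exactly inequality~\eqref{eq:ineq888} combined with \eqref{eq:general_form_for_expectation}), I get
\[
\E[\xi_{k+1}\mid x_k] \leq (1-\alpha)\xi_k + \alpha\,\frac{\beta-\mu_f(w)}{\beta+\mu_\cPsi(w)}\,\xi_k = \left(1 - \alpha\,\frac{\mu_f(w)+\mu_\cPsi(w)}{\beta+\mu_\cPsi(w)}\right)\xi_k,
\]
where the last equality is just the algebraic identity $1 - \tfrac{\beta-\mu_f}{\beta+\mu_\cPsi} = \tfrac{\mu_f+\mu_\cPsi}{\beta+\mu_\cPsi}$. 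This is precisely property (ii) of Lemma~\ref{l:randomVariableTrick} with constant $c_2 = \tfrac{1}{\alpha}\cdot\tfrac{\beta+\mu_\cPsi(w)}{\mu_f(w)+\mu_\cPsi(w)}$; note $c_2>1$ because $\tfrac{\beta+\mu_\cPsi}{\mu_f+\mu_\cPsi}\geq 1$ (as $\beta\geq\mu_f(w)$) and $\alpha\leq 1$. One also needs $\{\xi_k\}$ to be nonincreasing, which Lemma~\ref{l:randomVariableTrick}(ii) formally requires; this is not automatic for PCDM1, so I would invoke the monotonicity-in-expectation argument — actually, re-reading Lemma~\ref{l:randomVariableTrick}, it does require $\{\xi_k\}$ nonincreasing pointwise, so the cleanest route is to observe that the recursion above is all that is needed if one checks the lemma's proof only uses the expectation bound on the stopped process; alternatively one notes that the statement covers PCDM1 and PCDM2 and the recursion holds for both, deferring the monotonicity bookkeeping to the cited reference. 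Finally, with $\xi_0 = F(x_0)-F^*$, Lemma~\ref{l:randomVariableTrick}(ii) gives $\Prob(\xi_K\leq\epsilon)\geq 1-\rho$ as soon as $K\geq c_2\log(\xi_0/(\epsilon\rho))$, which is exactly \eqref{eq:k_uniform_strong}.

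The main obstacle I anticipate is the monotonicity hypothesis in Lemma~\ref{l:randomVariableTrick}: unlike the convex-case theorem, which restricts PCDM1 to monotonic ESOs or uses PCDM2, the strongly convex theorem claims to cover PCDM1 with \emph{no} monotonicity restriction. Resolving this requires either a version of Lemma~\ref{l:randomVariableTrick}(ii) that dispenses with the pointwise-monotonicity assumption (only the expectation recursion for $\xi_k\geq\epsilon$ is genuinely needed to iterate the bound and apply Markov's inequality to the stopped process), or a separate argument that the iterates stay in a sublevel set in expectation. I would handle this by stating the needed variant explicitly — the supermartingale-type argument behind Lemma~\ref{l:randomVariableTrick}(ii) goes through verbatim using only the conditional recursion derived above — and otherwise the proof is a short chain of the three cited lemmas plus one line of algebra.
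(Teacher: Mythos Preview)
Your derivation of the one-step contraction
\[
\E[\xi_{k+1}\mid x_k]\leq\left(1-\alpha\,\frac{\mu_f(w)+\mu_\cPsi(w)}{\beta+\mu_\cPsi(w)}\right)\xi_k
\]
via \eqref{eq:1}, Lemma~\ref{lem:H-Fstar}(ii) and \eqref{eq:beta-mu} is exactly what the paper does, and this is the heart of the proof.

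Where you diverge is in the finishing step. You route through Lemma~\ref{l:randomVariableTrick}(ii), notice (correctly) that its pointwise-monotonicity hypothesis is not available for PCDM1, and then propose to patch the lemma. The paper instead bypasses Lemma~\ref{l:randomVariableTrick} entirely: it simply iterates the contraction in full expectation to get $\E[\xi_K]\leq(1-\gamma)^K\xi_0$ and applies Markov's inequality,
\[
\Prob(\xi_K>\epsilon)\leq\frac{\E[\xi_K]}{\epsilon}\leq\frac{(1-\gamma)^K\xi_0}{\epsilon}\leq\rho,
\]
which gives the bound \eqref{eq:k_uniform_strong} directly. The paper even remarks afterwards that going through Lemma~\ref{l:randomVariableTrick}(ii) would have forced a monotonicity assumption, and that the direct Markov route is precisely what allows the theorem to cover PCDM1 without any monotonic-ESO restriction. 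Your ``supermartingale-type argument \dots\ goes through verbatim'' is this direct computation in disguise; making it the primary argument (two lines) rather than a workaround would match the paper and remove the hedging in your proposal.
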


\noindent {\em Proof.}
Letting $\xi_k = F(x_k)-F^*$, we have
\[
  \E [\xi_{k+1} \;|\; x_k]
   \overset{\eqref{eq:1} }{\leq}    (1-\alpha) \xi_k + \alpha (H_{\beta,w}(x_k,h(x_k))-F^*)   \overset{\eqref{eq:nonsmooth:gammaxiUsage}}{\leq}   \left(1-\alpha \tfrac{\mu_f(w)+\mu_\cPsi(w)}{\beta+\mu_\cPsi(w)} \right) \xi_k \eqdef (1-\gamma)\xi_k.
\]
Note that $0 < \gamma \leq 1$ since $0<\alpha\leq 1$ and $\beta \geq \mu_f(w)$ by \eqref{eq:beta-mu}. By taking expectation in $x_k$, we obtain $\E[\xi_k]\leq (1 - \gamma)^k\xi_0$. Finally, it remains to use Markov inequality:
\[\Prob(\xi_K > \epsilon) \leq \frac{\E[\xi_K]}{\epsilon} \leq \frac{(1-\gamma)^K \xi_0}{\epsilon} \overset{\eqref{eq:k_uniform_strong}}{\leq} \rho. \qquad \qed\]

Instead of doing a direct calculation, we could have finished the proof of Theorem~\ref{thm:complexity-strongly-convex-case} by applying Lemma~\ref{l:randomVariableTrick}(ii) to the inequality $\E[\xi_{k+1}\;|\; x_k] \leq (1-\gamma)\xi_{k}$. However, in order to be able to use Lemma~\ref{l:randomVariableTrick}, we would have to first establish monotonicity of the sequence $\{\xi_k\}$, $k \geq 0$. This is not necessary using the direct approach of Theorem~\ref{thm:complexity-strongly-convex-case}. Hence, in the strongly convex case we can analyze PCDM1 and are not forced to resort to PCDM2. Consider now the following situations:
\begin{enumerate}
\item $\mu_f(w) = 0$. Then the leading term in \eqref{eq:k_uniform_strong} is $\tfrac{1+\beta/\mu_\cPsi(w)}{\alpha}$.
\item $\mu_\cPsi(w) = 0$. Then the leading term in \eqref{eq:k_uniform_strong} is $\tfrac{\beta/\mu_f(w)}{\alpha}$.
\item $\mu_\cPsi(w)$ is ``large enough''. Then   $\tfrac{\beta+\mu_{\cPsi}(w)}{\mu_f(w)+\mu_{\cPsi}(w)} \approx 1$ and the leading term in  \eqref{eq:k_uniform_strong} is $\tfrac{1}{\alpha}$.
\end{enumerate}
In a similar way as in the non-strongly convex case, define the parallelization speedup factor as the ratio of the leading term in \eqref{eq:k_uniform_strong} for the serial method (which has $\alpha=\tfrac{1}{n}$ and $\beta=1$) and the leading term for a parallel method:
\begin{equation}\label{eq:speedup_factor_SC}\text{parallelization speedup factor} =  \frac{n\tfrac{1 + \mu_\cPsi(w)}{\mu_f(w) + \mu_\cPsi(w)} }
{\tfrac{1}{\alpha} \tfrac{\beta + \mu_\cPsi(w)}{\mu_f(w) + \mu_\cPsi(w)}  } = \frac{n}{\frac{\beta + \mu_\cPsi(w)}{\alpha (1+\mu_\cPsi(w))}}.\end{equation}

First, note that the speedup factor is independent of $\mu_f$. Further, note that as $\mu_\cPsi(w)\to 0$, the speedup factor approaches the factor we obtained in the non-strongly convex case (see \eqref{eq:speedup_factor_C} and also Table~\ref{tbl:upperbounds}). That is, for large values of $\mu_\cPsi(w)$, the speedup factor is approximately equal $\alpha n = \E[|\Srv|]$, which is  the average number of blocks updated in a single parallel iteration. Note that thuis quantity \emph{does not} depend on the degree of partial separability of $f$.

%


\section{Numerical Experiments}\label{SEC:Numerical_Experiments}

In Section~\ref{sec:billion} we present preliminary but very encouraging results showing that PCDM1 run on a system with 24 cores can solve huge-scale partially-separable LASSO problems with a billion variables in  2 hours, compared with 41 hours on a single core. In Section~\ref{sec:TR} we demonstrate that our analysis is in some sense tight. In particular, we show that the speedup predicted by the theory can be matched almost exactly by actual wall time speedup for a particular problem.

\subsection{A LASSO problem with 1 billion variables}\label{sec:billion}


In this experiment we solve a single randomly generated huge-scale LASSO instance, i.e., \eqref{eq:P} with \[f(x)=\tfrac{1}{2}\|Ax-b\|_2^2, \qquad \cPsi(x) = \|x\|_1,\]  where $A=[a_1,\dots,a_n]$ has $2\times 10^9$ rows and $\N=n=10^9$ columns. We generated the problem  using a modified primal-dual generator \cite{RT:UCDC} enabling us to choose the optimal solution $x^*$ (and hence, indirectly, $F^*$) and thus to control its cardinality $\|x^*\|_0$, as well as the sparsity level of $A$. In particular, we made the following choices: $\|x^*\|_0 = 10^5$, each column of $A$ has exactly 20 nonzeros and the maximum cardinality of a row of $A$ is $\omega= 35$ (the degree of partial separability of $f$). The histogram of cardinalities is displayed in Figure~\ref{fig:histogram}.

\begin{figure}[ht!]
\begin{center}
\includegraphics[width=3in]{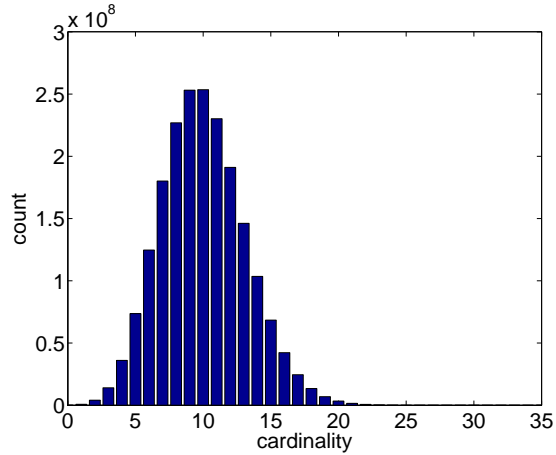}
\caption{Histogram of the cardinalities of the rows of $A$.}
\label{fig:histogram}
\end{center}
\end{figure}

\begin{table}[!ht]
 \centering
 \footnotesize
 \rotatebox{90}{
 \begin{tabular}{|c||c|c|c|c|c|c||c|c|c|c|c|c|}
  & \multicolumn{6}{c||} {$F(x_k)-F^*$}
  & \multicolumn{6}{|c|} {Elapsed Time}
  \\ \hline
  $\tfrac{\tau k}{n}$ &
  $\tau =1$ & $\tau = 2$ & $\tau = 4$  & $\tau = 8$ & $\tau = 16$ & $\tau = 24$
  &
  $\tau = 1$ & $\tau = 2$ & $\tau = 4$  & $\tau = 8$ & $\tau = 16$ & $\tau = 24$
  \\ \hline \hline
0 & 6.27e+22 & 6.27e+22 & 6.27e+22 & 6.27e+22 & 6.27e+22 & 6.27e+22 & 0.00 & 0.00 & 0.00 & 0.00 & 0.00 & 0.00 \\
1 & 2.24e+22 & 2.24e+22 & 2.24e+22 & 2.24e+22 & 2.24e+22 & 2.24e+22 & 0.89 & 0.43 & 0.22 & 0.11 & 0.06 & 0.05 \\
2 & 2.24e+22 & 2.24e+22 & 2.24e+22 & 3.64e+19 & 2.24e+22 & 8.13e+18 & 1.97 & 1.06 & 0.52 & 0.27 & 0.14 & 0.10 \\
3 & 1.15e+20 & 2.72e+19 & 8.37e+19 & 1.94e+19 & 1.37e+20 & 5.74e+18 & 3.20 & 1.68 & 0.82 & 0.43 & 0.21 & 0.16 \\
4 & 5.25e+19 & 1.45e+19 & 2.22e+19 & 1.42e+18 & 8.19e+19 & 5.06e+18 & 4.28 & 2.28 & 1.13 & 0.58 & 0.29 & 0.22 \\
5 & 1.59e+19 & 2.26e+18 & 1.13e+19 & 1.05e+17 & 3.37e+19 & 3.14e+18 & 5.37 & 2.91 & 1.44 & 0.73 & 0.37 & 0.28 \\
6 & 1.97e+18 & 4.33e+16 & 1.11e+19 & 1.17e+16 & 1.33e+19 & 3.06e+18 & 6.64 & 3.53 & 1.75 & 0.89 & 0.45 & 0.34 \\
7 & 2.40e+16 & 2.94e+16 & 7.81e+18 & 3.18e+15 & 8.39e+17 & 3.05e+18 & 7.87 & 4.15 & 2.06 & 1.04 & 0.53 & 0.39 \\
8 & 5.13e+15 & 8.18e+15 & 6.06e+18 & 2.19e+14 & 5.81e+16 & 9.22e+15 & 9.15 & 4.78 & 2.37 & 1.20 & 0.61 & 0.45 \\
9 & 8.90e+14 & 7.87e+15 & 2.09e+16 & 2.08e+13 & 2.24e+16 & 5.63e+15 & 10.43 & 5.39 & 2.67 & 1.35 & 0.69 & 0.51 \\
10 & 5.81e+14 & 6.52e+14 & 7.75e+15 & 3.42e+12 & 2.89e+15 & 2.20e+13 & 11.73 & 6.02 & 2.98 & 1.51 & 0.77 & 0.57 \\
11 & 5.13e+14 & 1.97e+13 & 2.55e+15 & 1.54e+12 & 2.55e+15 & 7.30e+12 & 12.81 & 6.64 & 3.29 & 1.66 & 0.84 & 0.63 \\
12 & 5.04e+14 & 1.32e+13 & 1.84e+13 & 2.18e+11 & 2.12e+14 & 1.44e+12 & 14.08 & 7.26 & 3.60 & 1.83 & 0.92 & 0.68 \\
\hline
13 & 2.18e+12 & 7.06e+11 & 6.31e+12 & 1.33e+10 & 1.98e+14 & 6.37e+11 & {\bf 15.35} & {\bf 7.88} & {\bf 3.91} & {\bf 1.99} & {\bf 1.00} & 0.74 \\
\hline
14 & 7.77e+11 & 7.74e+10 & 3.10e+12 & 3.43e+09 & 1.89e+12 & 1.20e+10 & 16.65 & 8.50 & 4.21 & 2.14 & 1.08 & 0.80 \\
15 & 1.80e+10 & 6.23e+10 & 1.63e+11 & 1.60e+09 & 5.29e+11 & 4.34e+09 & 17.94 & 9.12 & 4.52 & 2.30 & 1.16 & 0.86 \\
16 & 1.38e+09 & 2.27e+09 & 7.86e+09 & 1.15e+09 & 1.46e+11 & 1.38e+09 & 19.23 & 9.74 & 4.83 & 2.45 & 1.24 & 0.91 \\
17 & 3.63e+08 & 3.99e+08 & 3.07e+09 & 6.47e+08 & 2.92e+09 & 7.06e+08 & 20.49 & 10.36 & 5.14 & 2.61 & 1.32 & 0.97 \\
18 & 2.10e+08 & 1.39e+08 & 2.76e+08 & 1.88e+08 & 1.17e+09 & 5.93e+08 & 21.76 & 10.98 & 5.44 & 2.76 & 1.39 & 1.03 \\
19 & 3.81e+07 & 1.92e+07 & 7.47e+07 & 1.55e+06 & 6.51e+08 & 5.38e+08 & 23.06 & 11.60 & 5.75 & 2.91 & 1.47 & 1.09 \\
20 & 1.27e+07 & 1.59e+07 & 2.93e+07 & 6.78e+05 & 5.49e+07 & 8.44e+06 & 24.34 & 12.22 & 6.06 & 3.07 & 1.55 & 1.15 \\
21 & 4.69e+05 & 2.65e+05 & 8.87e+05 & 1.26e+05 & 3.84e+07 & 6.32e+06 & 25.42 & 12.84 & 6.36 & 3.22 & 1.63 & 1.21 \\
22 & 1.47e+05 & 1.16e+05 & 1.83e+05 & 2.62e+04 & 3.09e+06 & 1.41e+05 & 26.64 & 13.46 & 6.67 & 3.38 & 1.71 & 1.26 \\
23 & 5.98e+04 & 7.24e+03 & 7.94e+04 & 1.95e+04 & 5.19e+05 & 6.09e+04 & 27.92 & 14.08 & 6.98 & 3.53 & 1.79 & 1.32 \\
24 & 3.34e+04 & 3.26e+03 & 5.61e+04 & 1.75e+04 & 3.03e+04 & 5.52e+04 & 29.21 & 14.70 & 7.28 & 3.68 & 1.86 & 1.38 \\
25 & 3.19e+04 & 2.54e+03 & 2.17e+03 & 5.00e+03 & 6.43e+03 & 4.94e+04 & 30.43 & 15.32 & 7.58 & 3.84 & 1.94 & 1.44 \\
\hline
26 & 3.49e+02 & 9.62e+01 & 1.57e+03 & 4.11e+01 & 3.68e+03 & 4.91e+04 & {\bf 31.71} & {\bf 15.94} & {\bf 7.89} & {\bf 3.99} & {\bf 2.02} & 1.49 \\
\hline
27 & 1.92e+02 & 8.38e+01 & 6.23e+01 & 5.70e+00 & 7.77e+02 & 4.90e+04 & 33.00 & 16.56 & 8.20 & 4.14 & 2.10 & 1.55 \\
28 & 1.07e+02 & 2.37e+01 & 2.38e+01 & 2.14e+00 & 6.69e+02 & 4.89e+04 & 34.23 & 17.18 & 8.49 & 4.30 & 2.17 & 1.61 \\
29 & 6.18e+00 & 1.35e+00 & 1.52e+01 & 2.35e-01 & 3.64e+01 & 4.89e+04 & 35.31 & 17.80 & 8.79 & 4.45 & 2.25 & 1.67 \\
30 & 4.31e+00 & 3.93e-01 & 6.25e-01 & 4.03e-02 & 2.74e+00 & 3.15e+01 & 36.60 & 18.43 & 9.09 & 4.60 & 2.33 & 1.73 \\
31 & 6.17e-01 & 3.19e-01 & 1.24e-01 & 3.50e-02 & 6.20e-01 & 9.29e+00 & 37.90 & 19.05 & 9.39 & 4.75 & 2.41 & 1.78 \\
32 & 1.83e-02 & 3.06e-01 & 3.25e-02 & 2.41e-03 & 2.34e-01 & 3.10e-01 & 39.17 & 19.67 & 9.69 & 4.91 & 2.48 & 1.84 \\
33 & 3.80e-03 & 1.75e-03 & 1.55e-02 & 1.63e-03 & 1.57e-02 & 2.06e-02 & 40.39 & 20.27 & 9.99 & 5.06 & 2.56 & 1.90 \\
34 & 7.28e-14 & 7.28e-14 & 1.52e-02 & 7.46e-14 & 1.20e-02 & 1.58e-02 & 41.47 & 20.89 & 10.28 & 5.21 & 2.64 & 1.96 \\
35&-&-&1.24e-02&-&1.23e-03&8.70e-14&-&-& 10.58&-&2.72&  2.02 \\
36&-&-&2.70e-03&-&3.99e-04&-&-&-& 10.88&-&2.80&  - \\
37&-&-&7.28e-14&-&7.46e-14&-&-&-& 11.19&-&2.87&  - \\
\hline
\end{tabular}
}
 \caption{A LASSO problem with $10^9$ variables solved by PCDM1 with $\tau =$ 1, 2, 4, 8, 16 and  24.}
 \label{tbl:large}
\end{table}

We solved the problem using PCDM1 with $\tau$-nice sampling $\Srv$, $\beta = 1+ \tfrac{(\omega-1)(\tau-1)}{n-1}$ and $w=L=(\|a_1\|^2_2,\cdots,\|a_n\|_2^2)$, for $\tau=1,2,4,8,16, 24$, on a single large-memory computer utilizing $\tau$ of its 24 cores. The problem description took around 350GB of memory space. In fact, in our implementation we departed from the just described setup in two ways. First, we implemented an \emph{asynchronous} version of the method; i.e., one in which cores do not wait for others to update the current iterate within an iteration before reading $x_{k+1}$ and proceeding to another update step. Instead, each core reads the current iterate whenever it is ready with the previous update step and applies the new update as soon as it is computed. Second, as mentioned in Section~\ref{SEC:Block_Samplings},  the $\tau$-independent sampling is for $\tau\ll n$ a very good approximation of the $\tau$-nice sampling. We therefore allowed each processor to pick a block uniformly at random, independently from the other processors.

\textbf{Choice of the first column of Table~\ref{tbl:large}.} In Table~\ref{tbl:large} we show the development of the gap $F(x_k)-F^*$ as well as the elapsed time. The choice and meaning of the first column of the table, $\tfrac{\tau k}{n}$, needs some commentary. Note that exactly $\tau k$ coordinate updates are performed after $k$ iterations. Hence, the first column denotes the total number of coordinate updates normalized by the number of coordinates $n$. As an example, let $\tau_1=1$ and $\tau_2=24$. Then if the serial method is run for $k_1=24$ iterations  and the parallel one for $k_2=1$ iteration, both methods would have updated the same number ($\tau_1 k_1 = \tau_2 k_2 = 24$) of coordinates; that is, they would ``be'' in the same row of Table~\ref{tbl:large}. In summary, each  row of the table represents, in the sense described above, the ``same amount of work done'' for each choice of $\tau$.

\textbf{Progress to solving the problem.} One can conjecture that the above meaning of the phrase ``same amount of work done'' would perhaps be roughly equivalent to a different one: ``same progress to solving the problem''. Indeed, it turns out, as can be seen from the table and also from Figure~\ref{fig:largeProblem}(a), that in each row for all algorithms the value of $F(x_k)-F^*$ is roughly of the same order of magnitude. This is not a trivial finding since, with increasing $\tau$, older information is used to update the coordinates, and hence one would expect  that convergence would be slower. It does seem to be slower---the gap $F(x_k)-F^*$ is generally higher if more processors are used---but the slowdown is limited. Looking at Table~\ref{tbl:large} and/or  Figure~\ref{fig:largeProblem}(a), we see that for all choices of $\tau$,  PCDM1 managed to push the gap below $10^{-13}$ after $34n$ to $37n$ coordinate updates.

\begin{figure}[!ht]
 \centering
 \centering
\subfigure[For each $\tau$, PCDM1 needs roughly the same number of coordinate updates to solve the problem.]{
\includegraphics[width=3in]{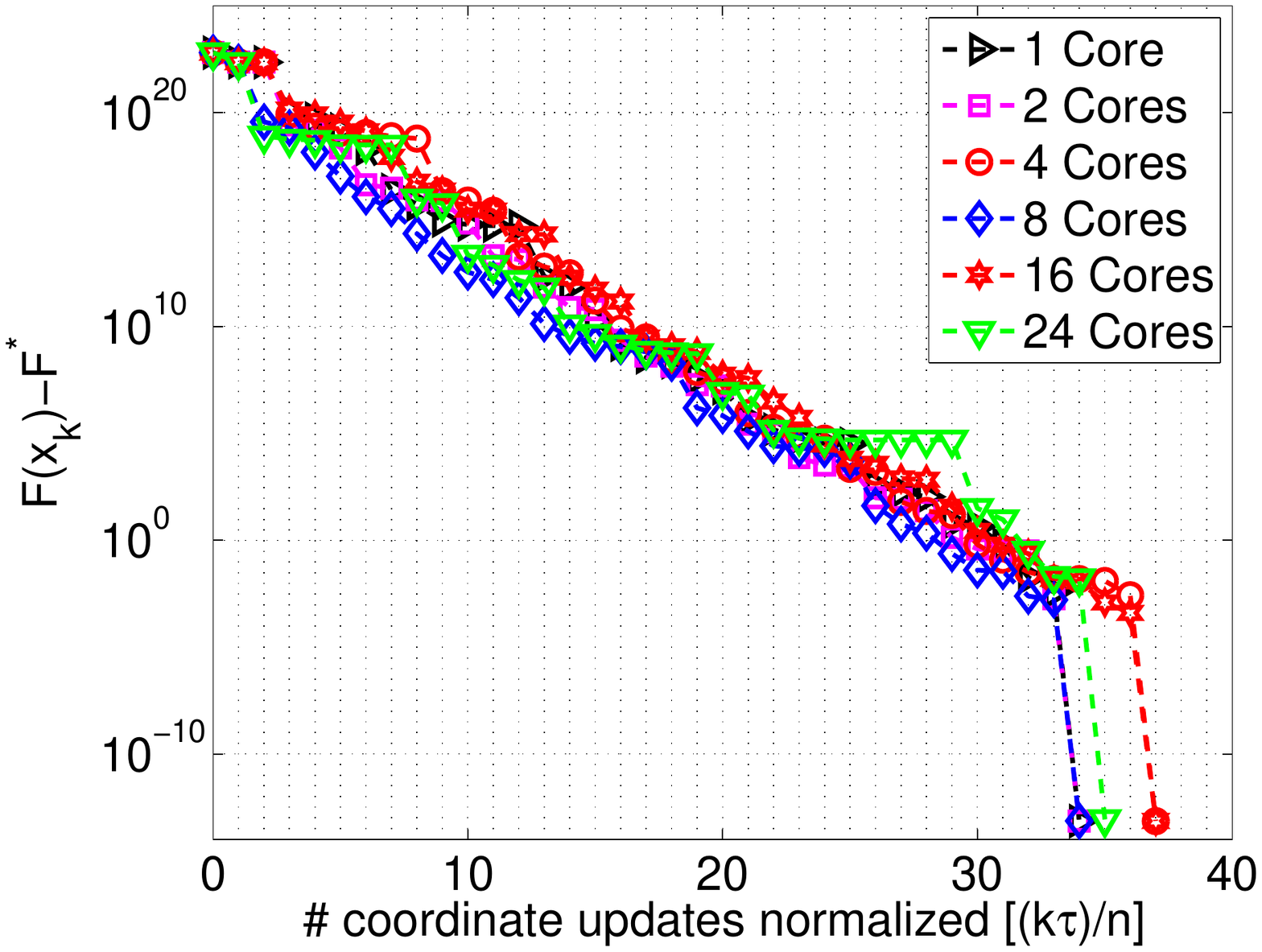}
\label{fig:largeProblem-coordinates}
}
\subfigure[Doubling the number of cores corresponds to roughly halving the number of iterations.]{
\includegraphics[width=3in]{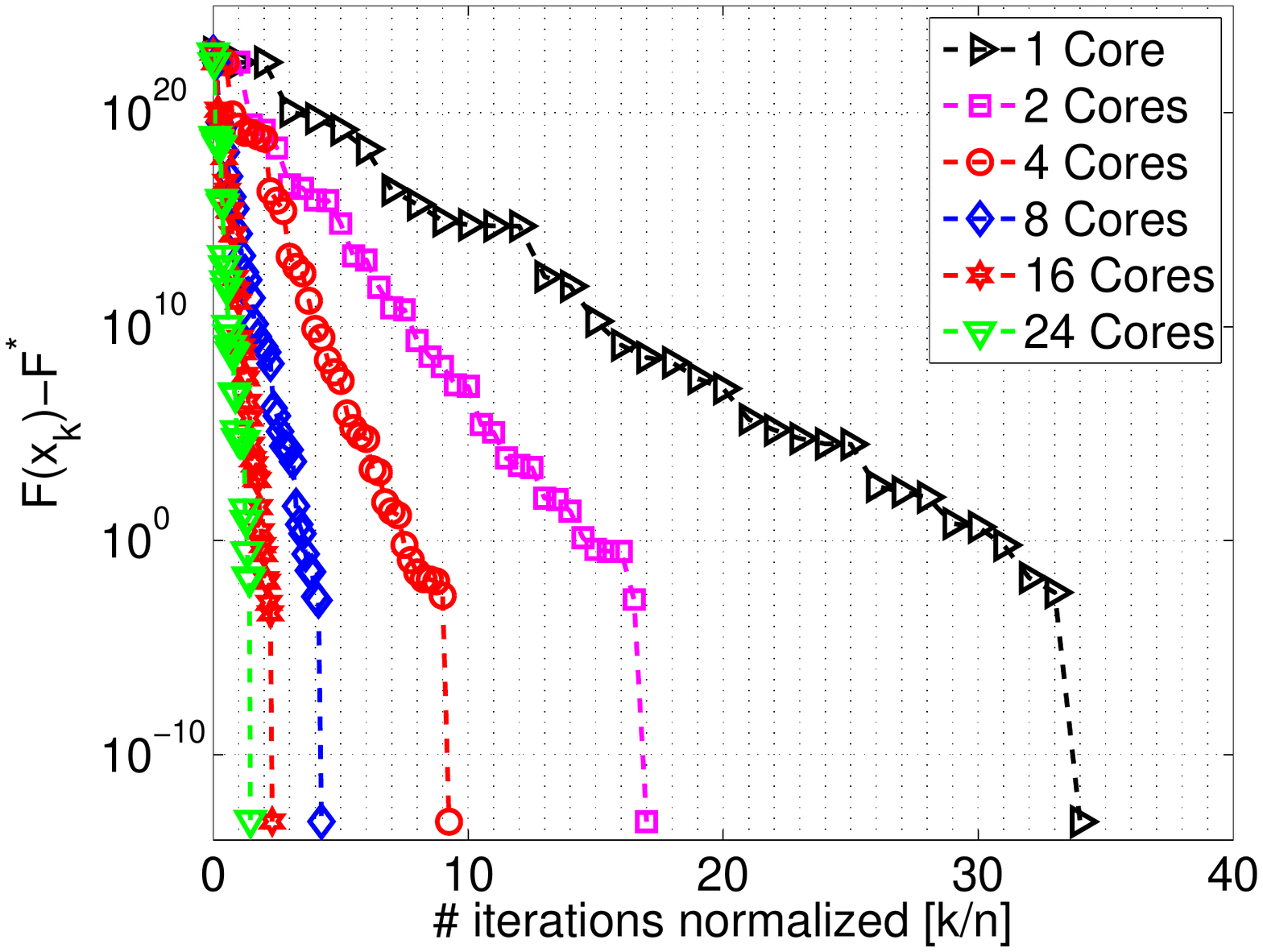}
\label{fig:largeProblem-steps}
}
\subfigure[Doubling the number of cores corresponds to roughly halving the wall time.]{
\includegraphics[width=3in]{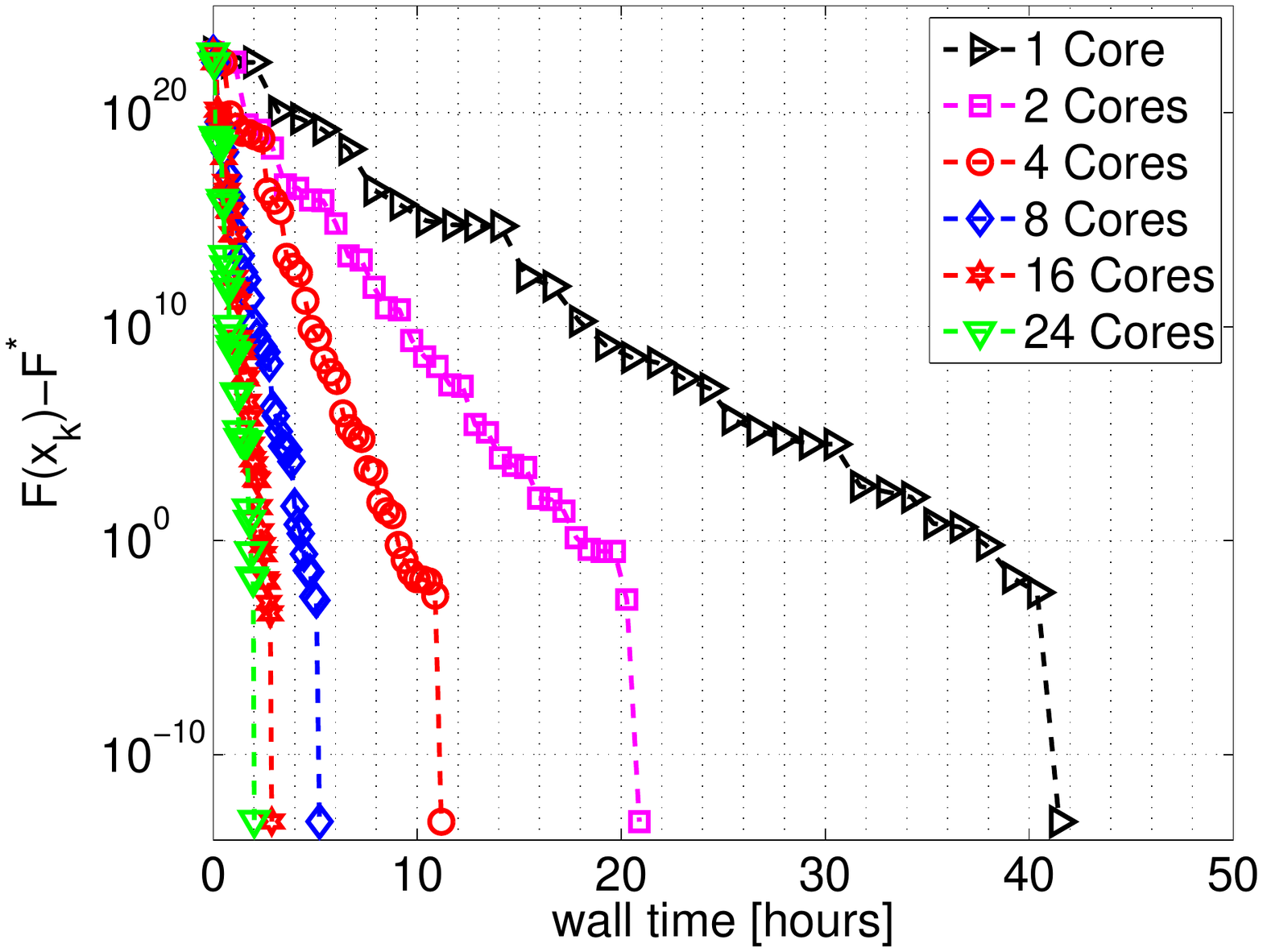}
\label{fig:largeProblem-times}
}
\subfigure[Parallelization speedup is essentially equal to the number of cores.]{
\includegraphics[width=3in]{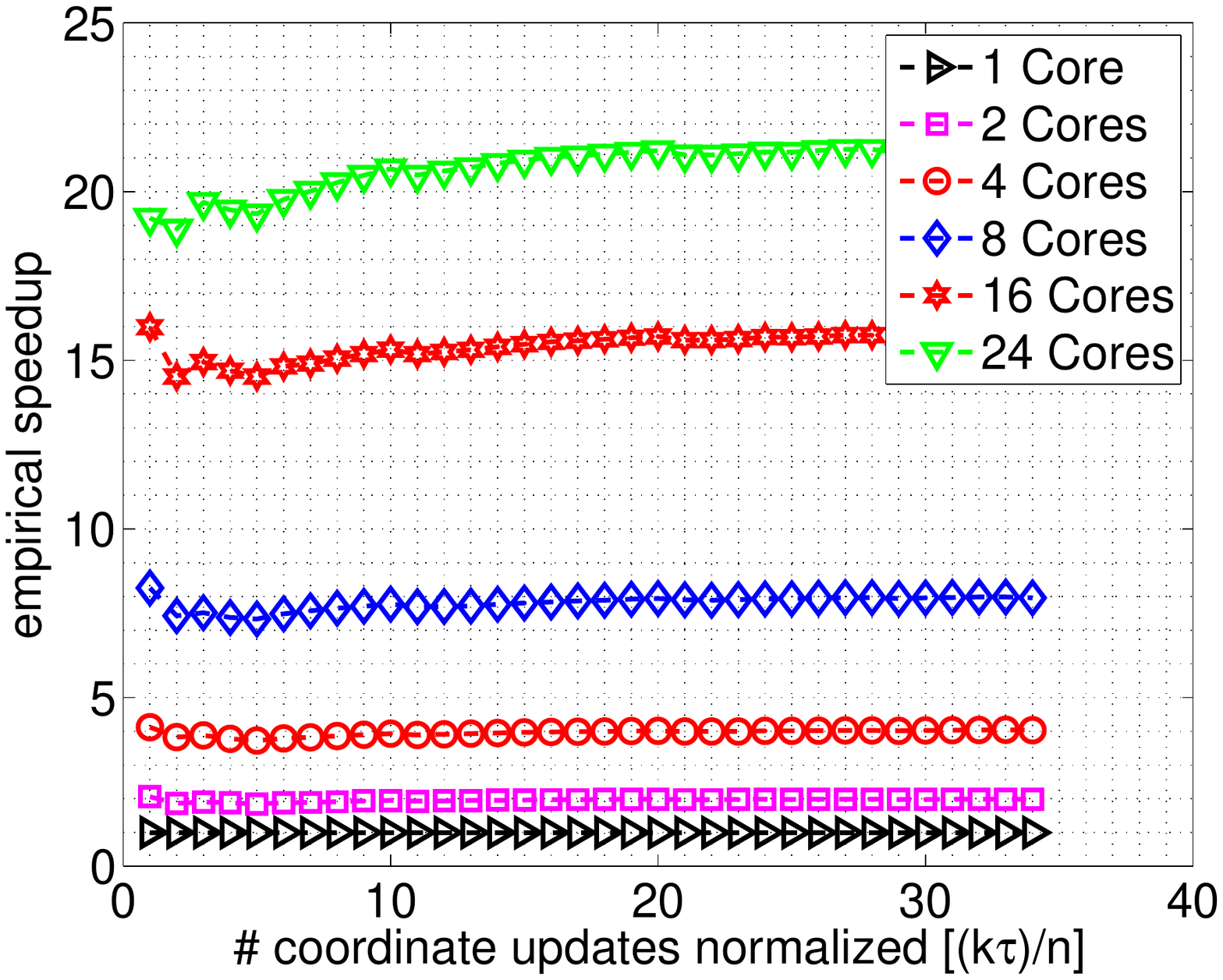}
\label{fig:largeProblem-speedup}
}
\caption{Four computational insights into the workings of PCDM1.}
\label{fig:largeProblem}
\end{figure}

The progress to solving the problem during the final 1 billion coordinate updates (i.e., when moving from the last-but-one to the last nonempty line in each of the columns of Table~\ref{tbl:large} showing $F(x_k)-F^*$ ) is remarkable. The method managed to push the optimality gap by 9-12 degrees of magnitude. We do not have an explanation for this phenomenon; we do not give local convergence estimates in this paper. It is certainly the case though that once the method managed to find the nonzero places of $x^*$, fast local convergence comes in.

\textbf{Parallelization speedup.} Since a parallel method utilizing $\tau$ cores manages to do the same number of coordinate updates as the serial one  $\tau$ times faster, a direct consequence of the above observation is that doubling the number of cores corresponds to roughly halving the number of iterations (see Figure~\ref{fig:largeProblem}(b). This is due to the fact that $\omega\ll n$ and $\tau \ll n$. It turns out that the number of iterations is an excellent predictor of wall time; this can be seen by comparing Figures~\ref{fig:largeProblem}(b) and \ref{fig:largeProblem}(c). Finally, it follows from the above, and can be seen in Figure~\ref{fig:largeProblem}(d), that the speedup of PCDM1 utilizing $\tau$ cores is roughly equal to $\tau$. Note that this is caused by the fact that the problem is, relative to its dimension, partially separable to a very high degree.

\subsection{Theory versus reality}\label{sec:TR}

In our second experiment we  demonstrate numerically that our parallelization speedup estimates are in some sense tight. For this purpose it is not necessary to reach for complicated problems and high dimensions; we hence minimize the function $\frac12 \|Ax-b\|_2^2$ with $A\in\R^{3000 \times 1000}$.
Matrix $A$ was generated so that its every row contains exactly $\omega$ non-zero values all of which are equal (recall the construction in point 3 at the end of Section~\ref{subsec:DSO}).

\begin{figure}[!ht]
 \centering
\includegraphics[width=3in]{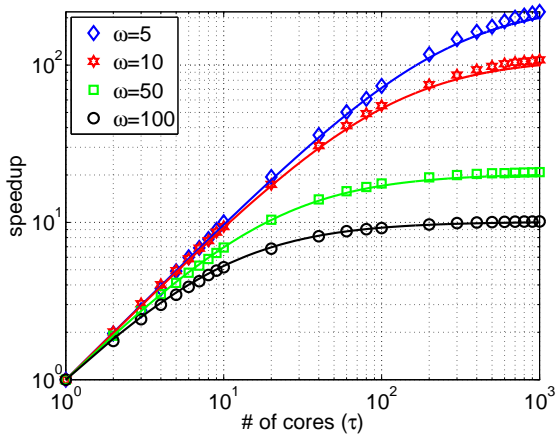}
 \caption{Theoretical speedup factor predicts the actual speedup almost exactly for a carefully constructed problem.}
 \label{fig:theoryvspractise}
\end{figure}

We generated 4 matrices with $\omega =5, 10, 50$ and $ 100$ and measured the number of iterations needed for PCDM1 used with $\tau$-nice sampling  to get within $\epsilon = 10^{-6}$ of the optimal value. The experiment was done for a range of values of $\tau$ (between 1 core and 1000 cores).

The solid lines in Figure~\ref{fig:theoryvspractise} present the \emph{theoretical speedup factor} for the $\tau$-nice sampling, as presented in Table~\ref{tbl:upperbounds}. The markers in each case correspond to \emph{empirical speedup factor} defined as

\[\frac{\mbox{\# of iterations till $\epsilon$-solution is found by PCDM1 used with serial sampling}}
{\mbox{\# of iterations till $\epsilon$-solution is found by PCDM1 used with $\tau$-nice sampling}}.
\]

As can be seen in Figure~\ref{fig:theoryvspractise}, the match between theoretical prediction and reality is remarkable! A partial explanation  of this phenomenon lies in the fact that we have carefully designed the problem so as to ensure that the degree of partial separability is equal to the Lipschitz constant $\sigma$ of $\nabla f$ (i.e., that it is not a gross overestimation of it; see Section~\ref{subsec:DSO}). This fact is useful since it is possible to prove complexity results with $\omega$ replaced by $\sigma$. However, this answer is far from satisfying, and a deeper understanding of the phenomenon remains an open problem.

\subsection{Training linear SVMs with bad data for PCDM}

In this experiment we test PCDM on the problem of training a linear Support Vector Machine (SVM) based on $n$ labeled training examples: $(y_i,A_i)\in \{+1,-1\}\times \R^d$, $i=1,2,\dots,n$. In particular, we consider the primal problem of minimizing L2-regularized  average hinge-loss,

\[\min_{w\in \R^d} \left\{g(w) \eqdef \frac{1}{n} \sum_{i=1}^n [1-y_i \ve{w}{a_i}]_+ + \frac{\lambda}{2}\|w\|_2^2\right\},\]
and the dual problem of maximizing a concave quadratic subject to zero-one box constraints,
\[
\max_{x\in \R^n,\; 0\leq x^{(i)} \leq 1} \left\{ -f(x) \eqdef -\frac{1}{2\lambda n^2}x^T Z x + \frac{1}{n}\sum_{i=1}^n x^{(i)} \right\},
\]
where $Z \in \R^{n\times n}$ with $Z_{ii}=y_i y_j \ve{A_i}{A_j}$.
It is a standard practice to apply \emph{serial} coordinate descent to the dual. Here we apply \emph{parallel} coordinate descent (PCDM; with $\tau$-nice sampling of coordinates) to the dual; i.e., minimize the convex function $f$ subject to box constraints. In this setting all blocks are of size $N_i=1$. The dual can be written in the form \eqref{eq:P}, i.e., \[\min_{x \in \R^n} \{F(x)=f(x)+\cPsi(x)\},\] where $\cPsi(x) = 0$ whenever $x^{(i)} \in [0,1]$ for all $i=1,2,\dots,n$, and $\cPsi(x)=+\infty$ otherwise.


We consider the  \texttt{rcv1.binary} dataset\footnote{\url{http://www.csie.ntu.edu.tw/~cjlin/libsvmtools/datasets/binary.html\#rcv1.binary}}.
The training data has $n = 677,399$ examples, $d= 47,236$ features, $49,556,258$ nonzero elements and  requires cca 1GB of RAM for storage. Hence, this is a small-scale problem. The degree of partial separability of $f$ is $\omega = 291,516$ (i.e., the maximum number of examples sharing a given feature). This is a very large number relative to $n$, and hence our theory would predict rather bad behavior for PCDM. We use PCDM1 with $\tau$-nice sampling ( approximating it by $\tau$-independent sampling for added efficiency) with $\beta$ following Theorem~\ref{thm:ESO-nice}: $\beta=1+ \frac{(\tau-1)(\omega-1)}{n-1}$.

The results of our experiments are summarized in Figure~\ref{fig:rcv1}. 
 Each column corresponds to a different level of regularization: $\lambda \in \{1,10^{-3},10^{-5}\}$. 
The rows show the 1) duality gap, 2) dual suboptimality, 3) train error and 4) test error; each for 1,4 and 16 processors ($\tau = 1,4,16$).
Observe that the plots in the first two rows are nearly identical; which means that the method is able to solve the primal problem at about the same speed as it can solve the dual problem\footnote{Revision comment: We did not propose primal-dual versions of PCDM in this paper, but we do so in the follow up work \cite{minibatch-ICML2013}. In this paper, for the SVM problem, our methods and theory apply to the dual only.}.

Observe also that in all cases, duality gap of around $0.01$ is sufficient for training as training error (classification performance of the SVM on the train data) does not decrease further after this point. Also observe the effect of $\lambda$ on training accuracy: accuracy increases from about $92\%$ for $\lambda=1$, through $95.3\%$
 for $\lambda =10^{-3}$ to above $97.8\%$ with $\lambda=10^{-5}$. In our case, choosing smaller  $\lambda$ does not lead to overfitting; the test error on test dataset (\# features =677,399, \# examples = 20,242) increases as $\lambda$ decreases, quickly reaching about $95\%$ (after 2 seconds of training) for $\lambda=0.001$ and for the smallest $\lambda$ going beyond $97\%$.

Note that PCDM with $\tau=16$ is about 2.5$\times$ faster than PCDM with $\tau=1$. This is much less than linear speedup, but is fully in line with our theoretical predictions. Indeed, for $\tau=16$ we get $\beta = 7.46$. Consulting Table~\ref{tbl:upperbounds}, we see that the theory says that with $\tau=16$ processors we should expect the parallelization speedup to be
$PSF= \tau/\beta = 2.15 $.



\begin{figure}[h!]
 
 \begin{tabular}{ccc}
 $\lambda=1$ & $\lambda=0.001$ & $\lambda=0.00001$ \\
 \includegraphics[width=2in]{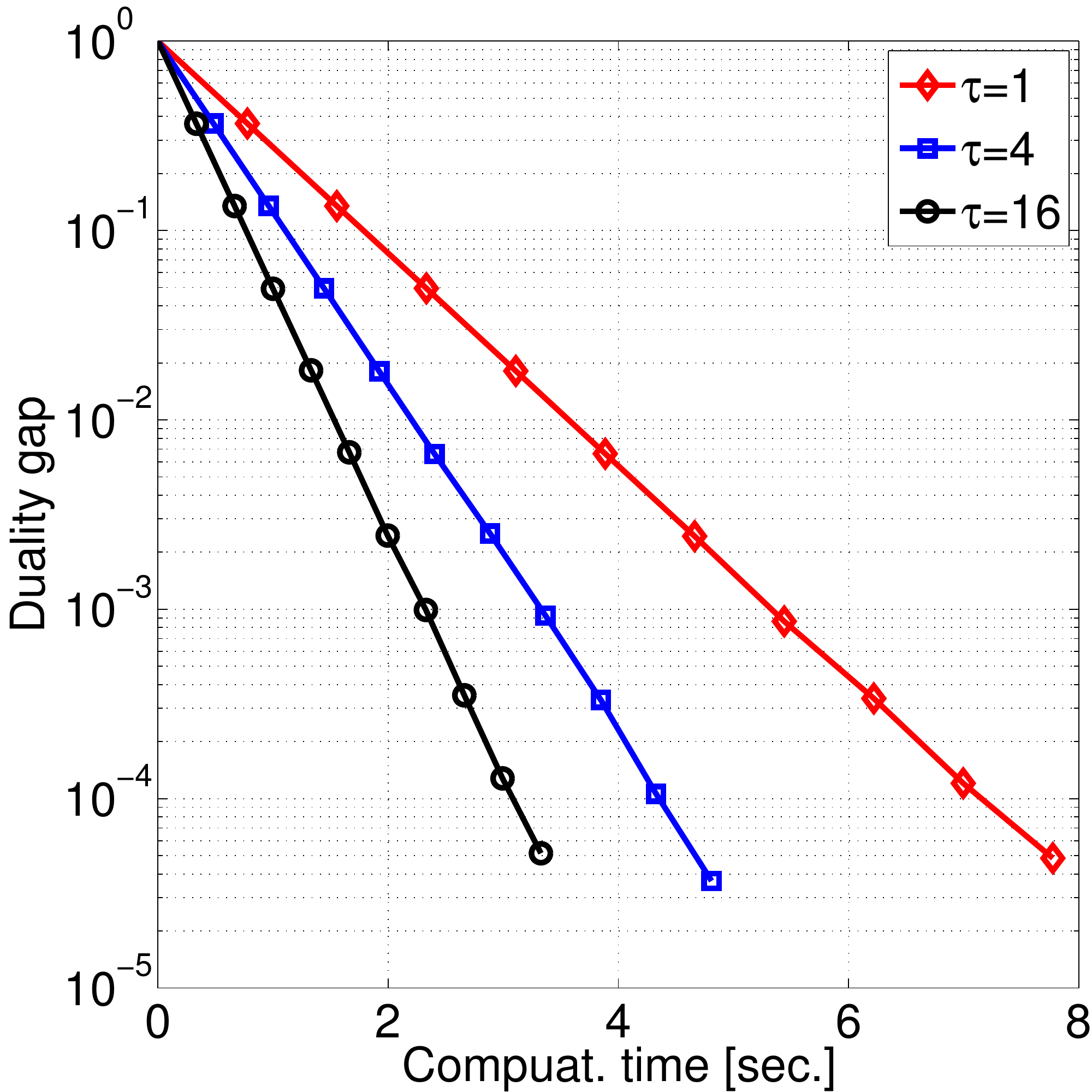}&
\includegraphics[width=2in]{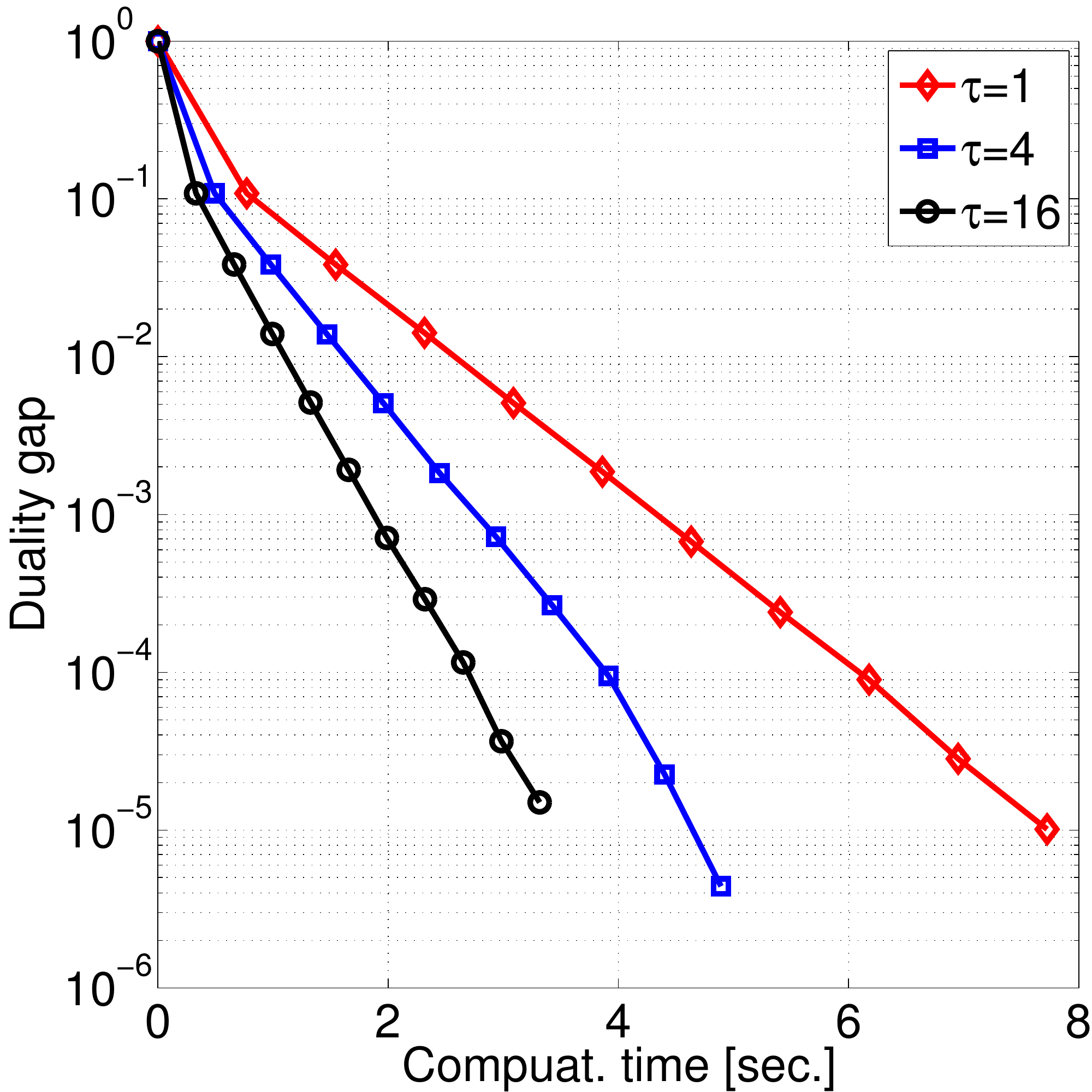} &
\includegraphics[width=2in]{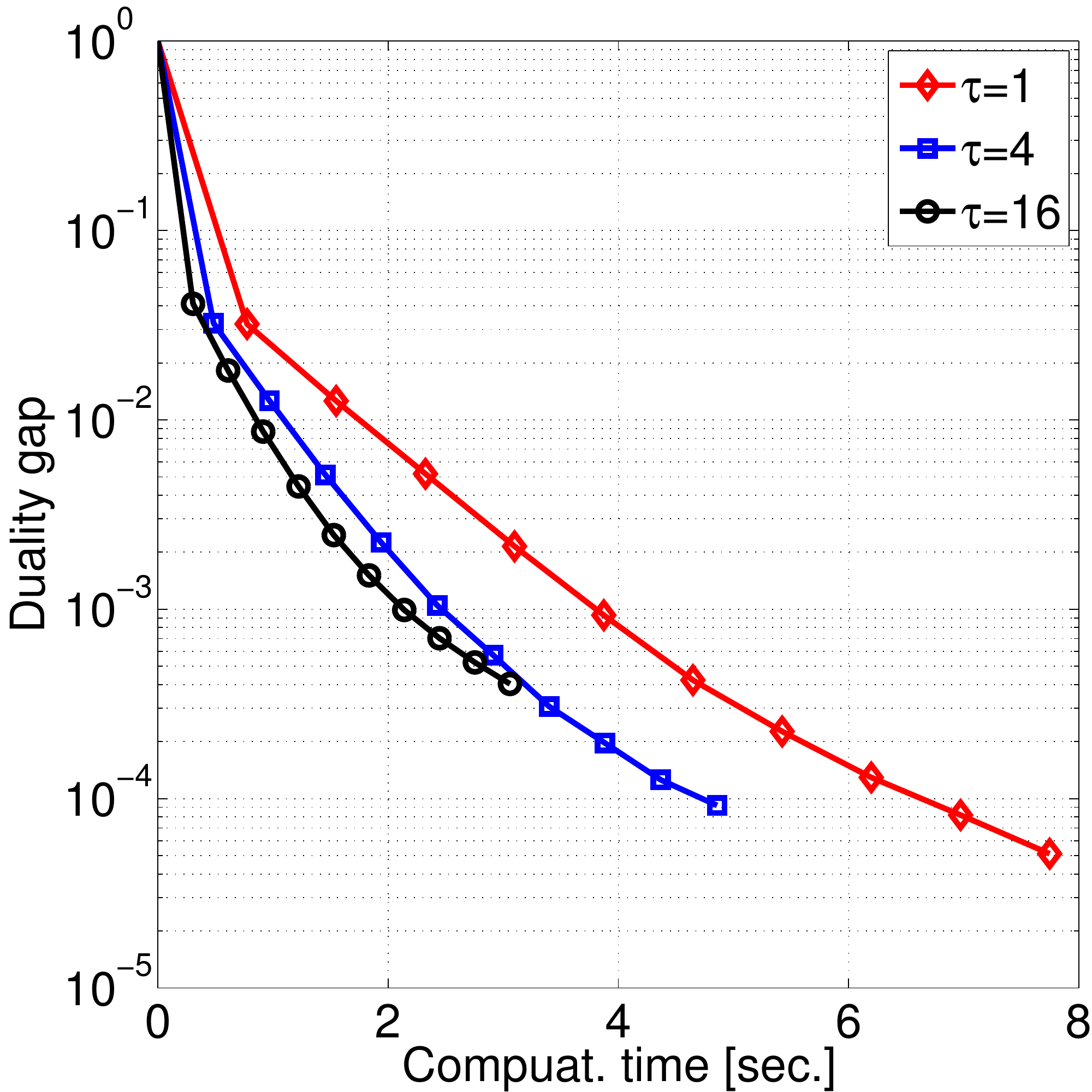} 
\\
 \includegraphics[width=2in]{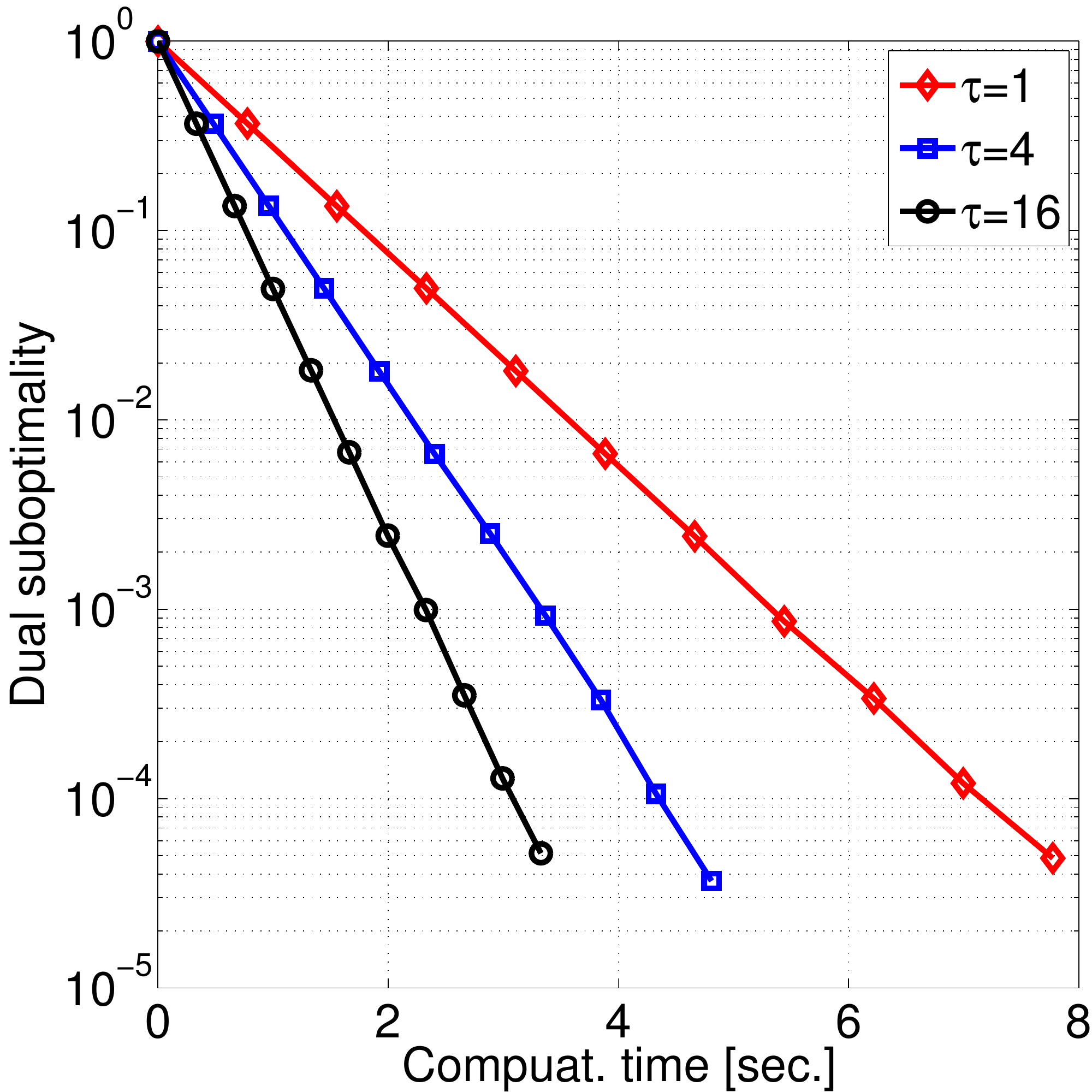}&
\includegraphics[width=2in]{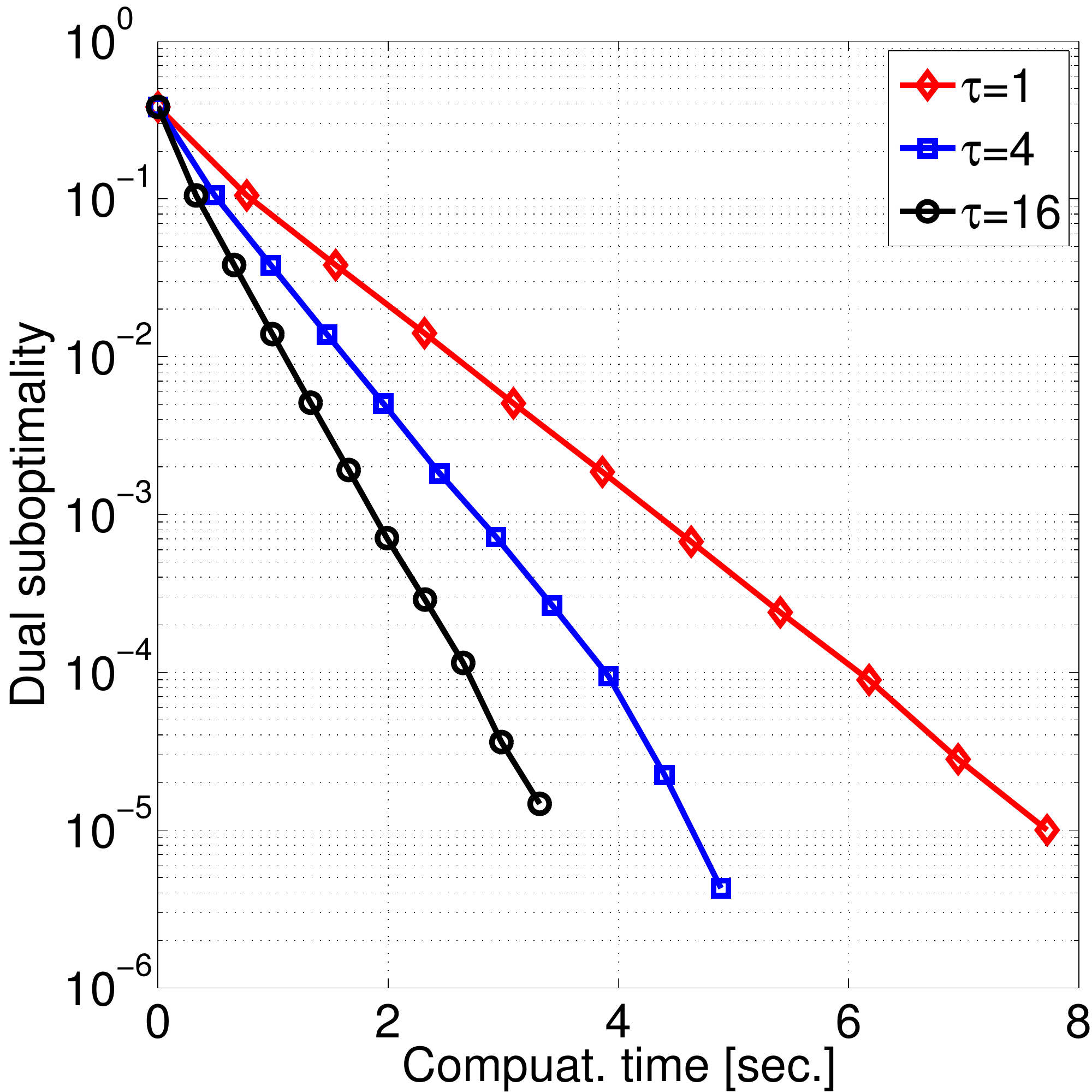} &
\includegraphics[width=2in]{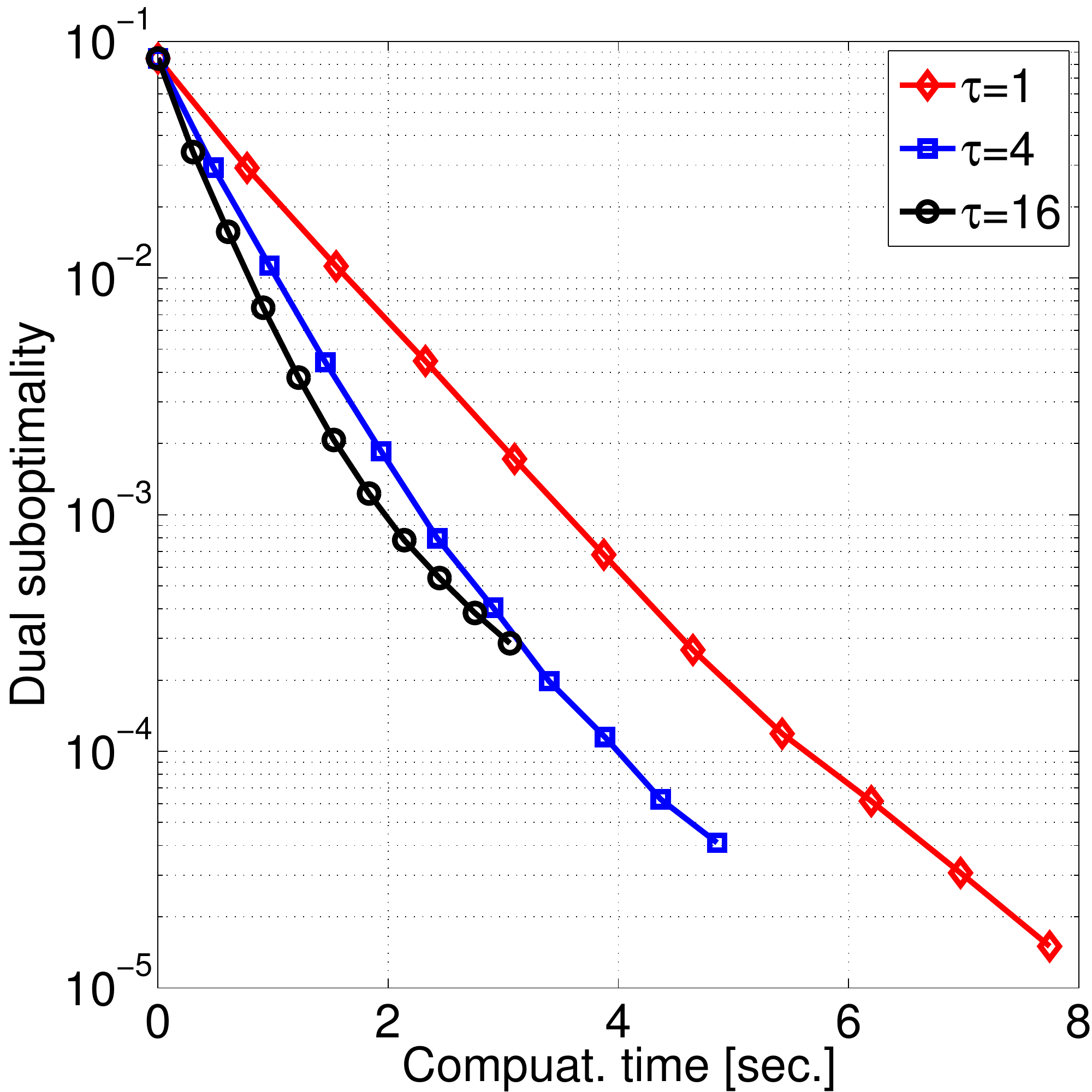} 
\\

\includegraphics[width=2in]{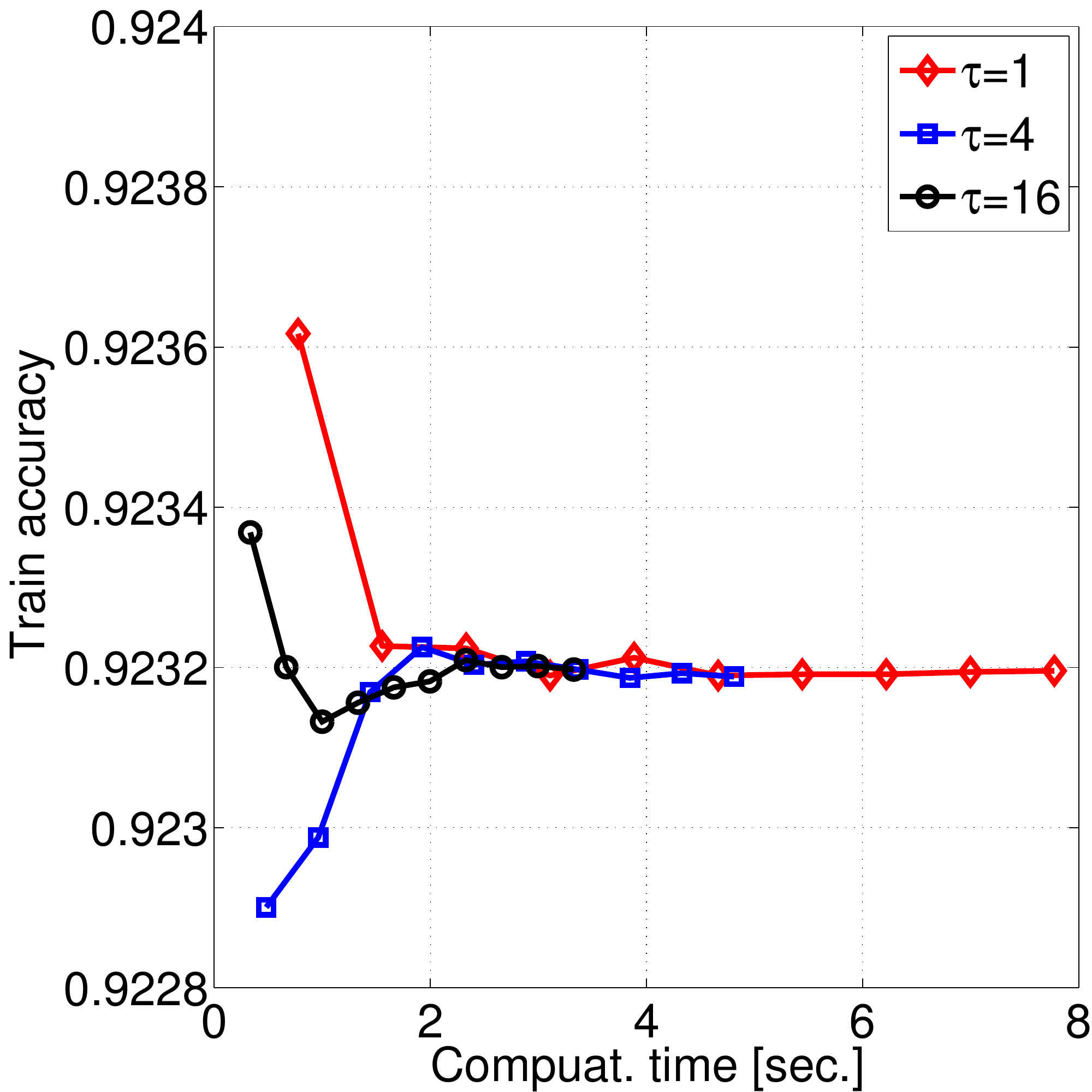}&
\includegraphics[width=2in]{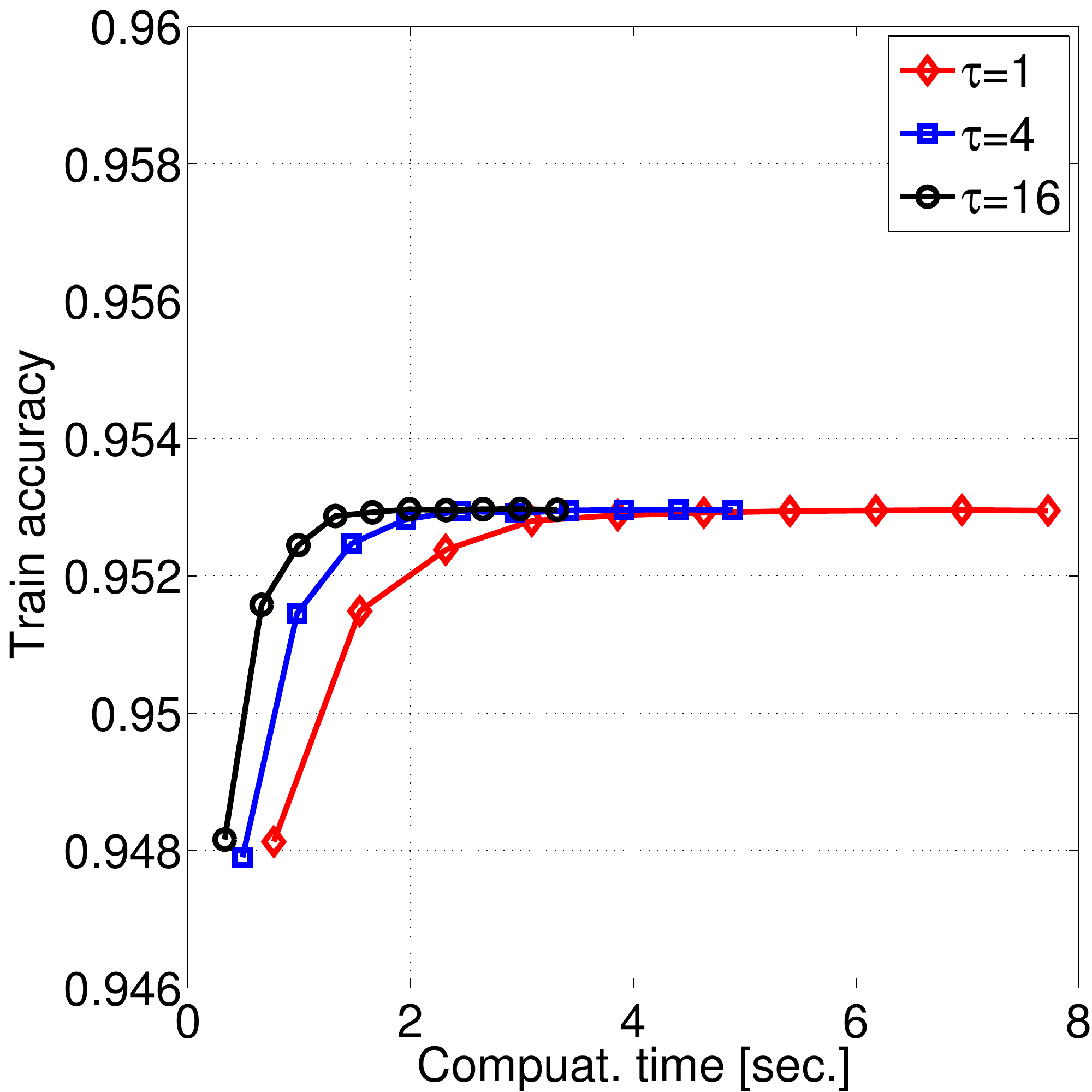}&
\includegraphics[width=2in]{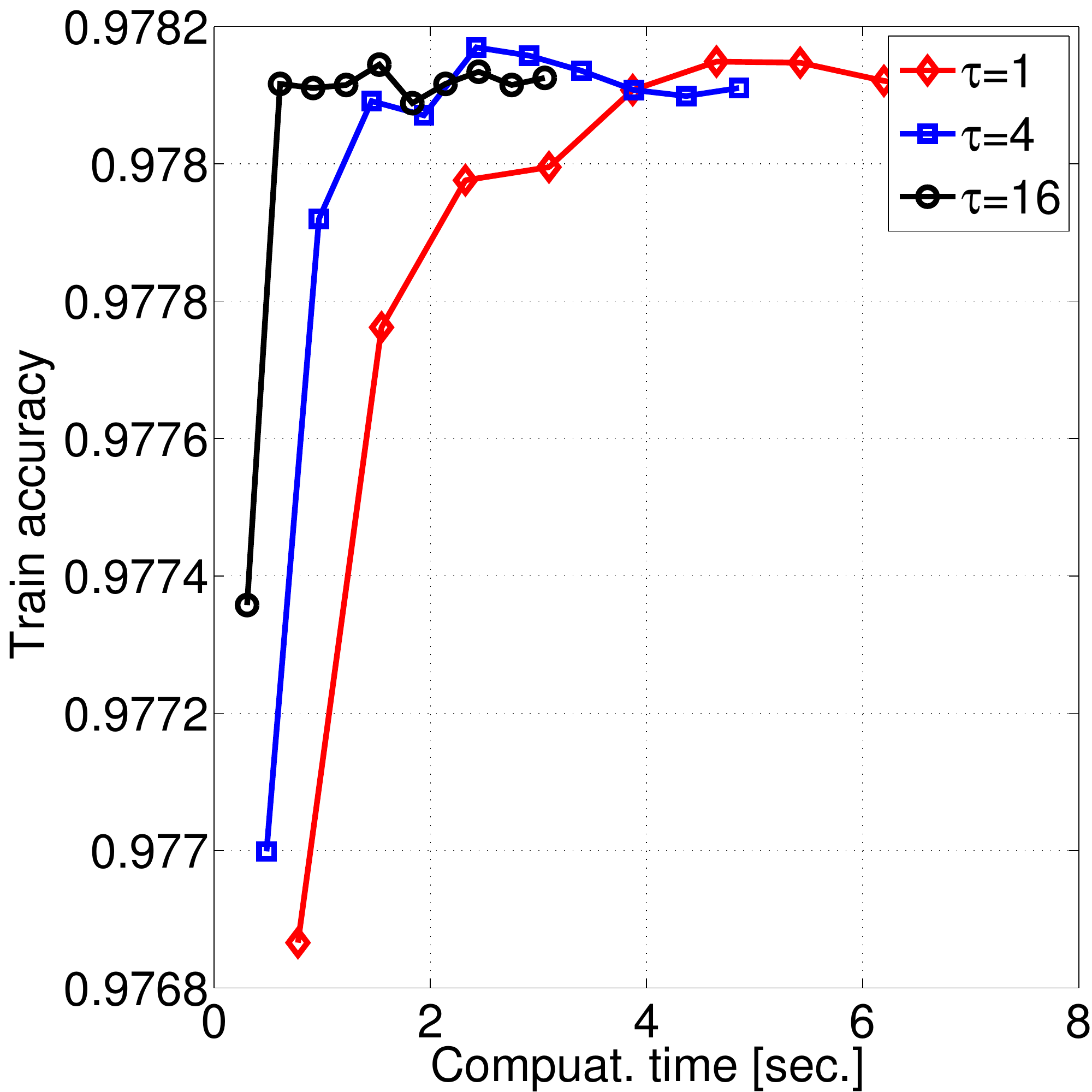}
\\

\includegraphics[width=2in]{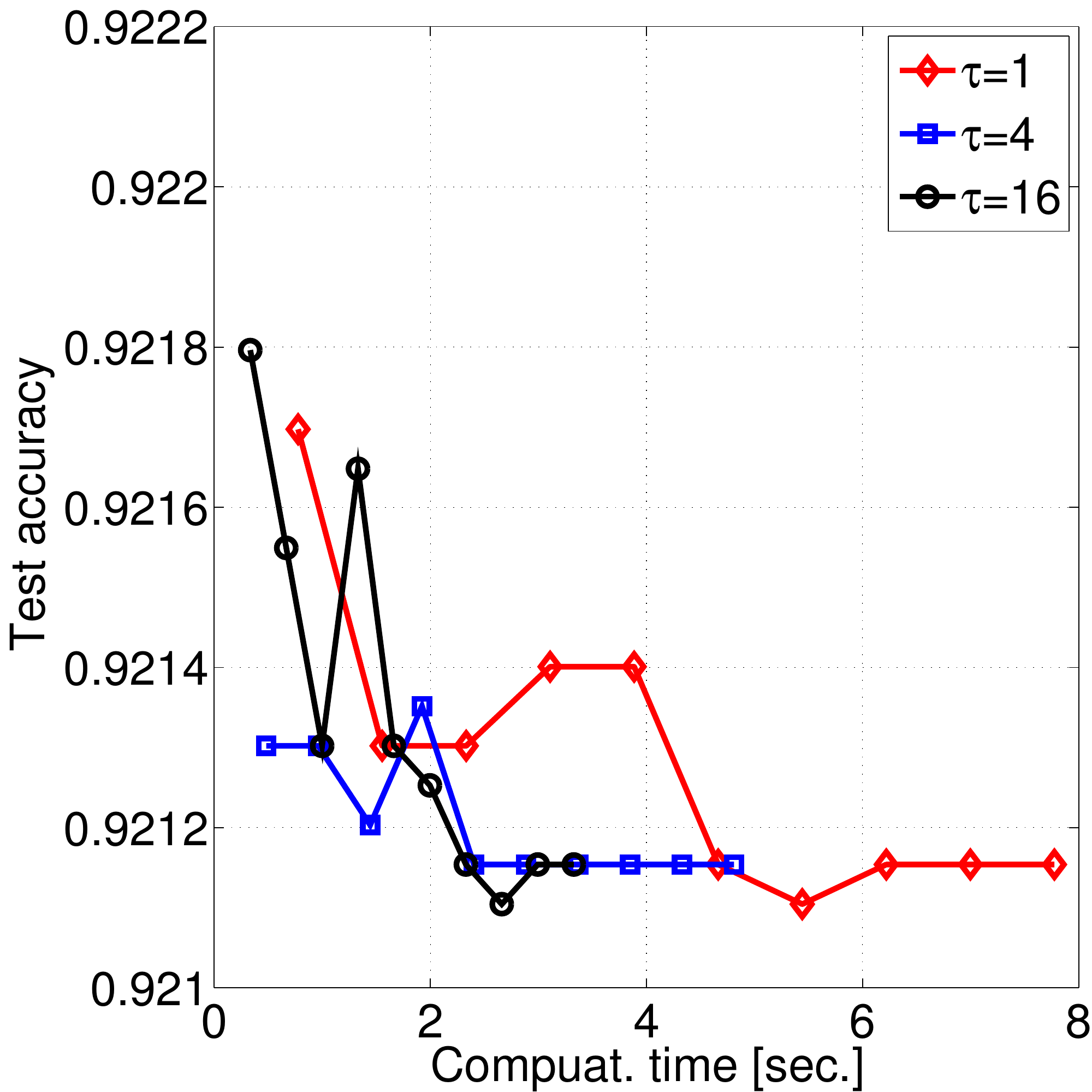}&
\includegraphics[width=2in]{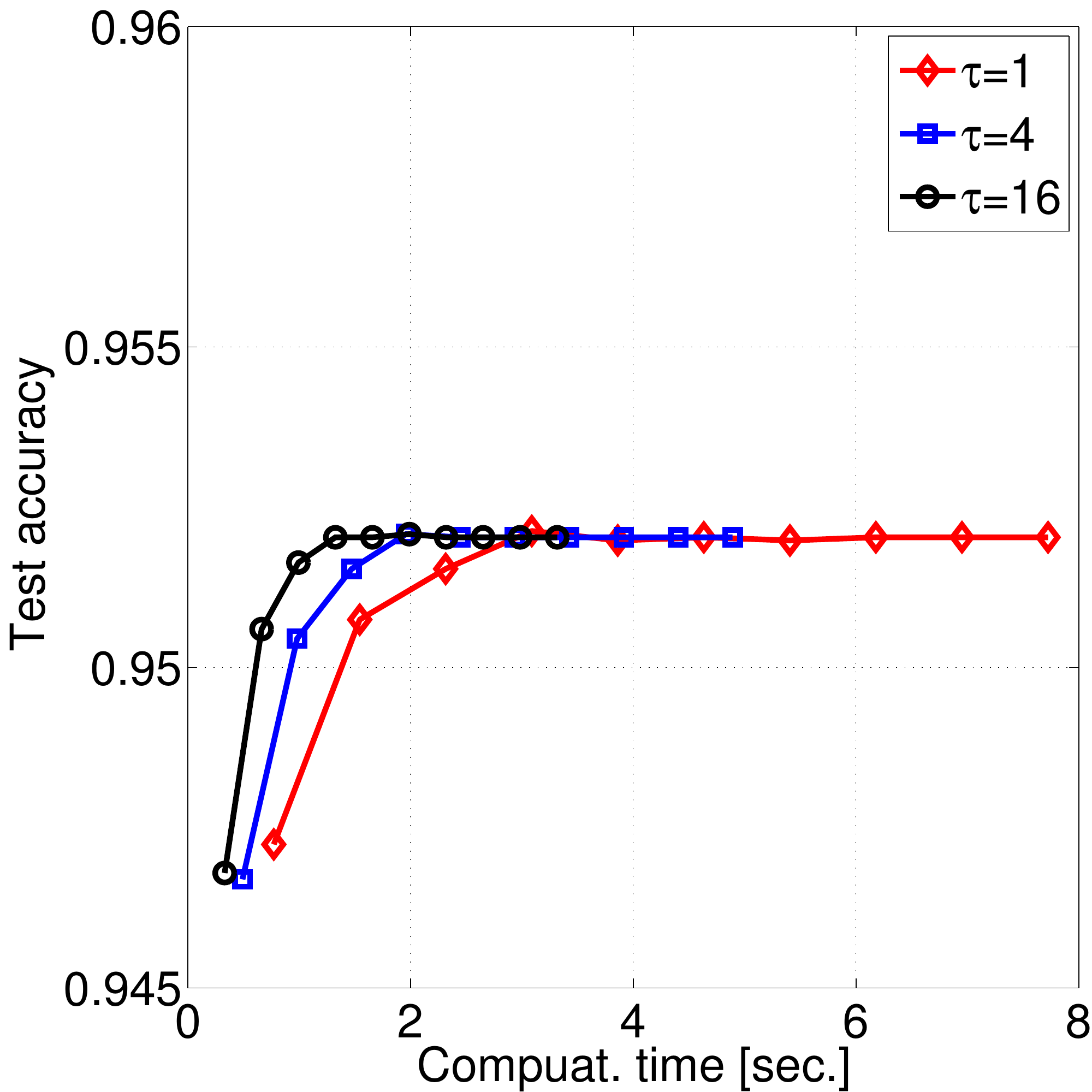}&
\includegraphics[width=2in]{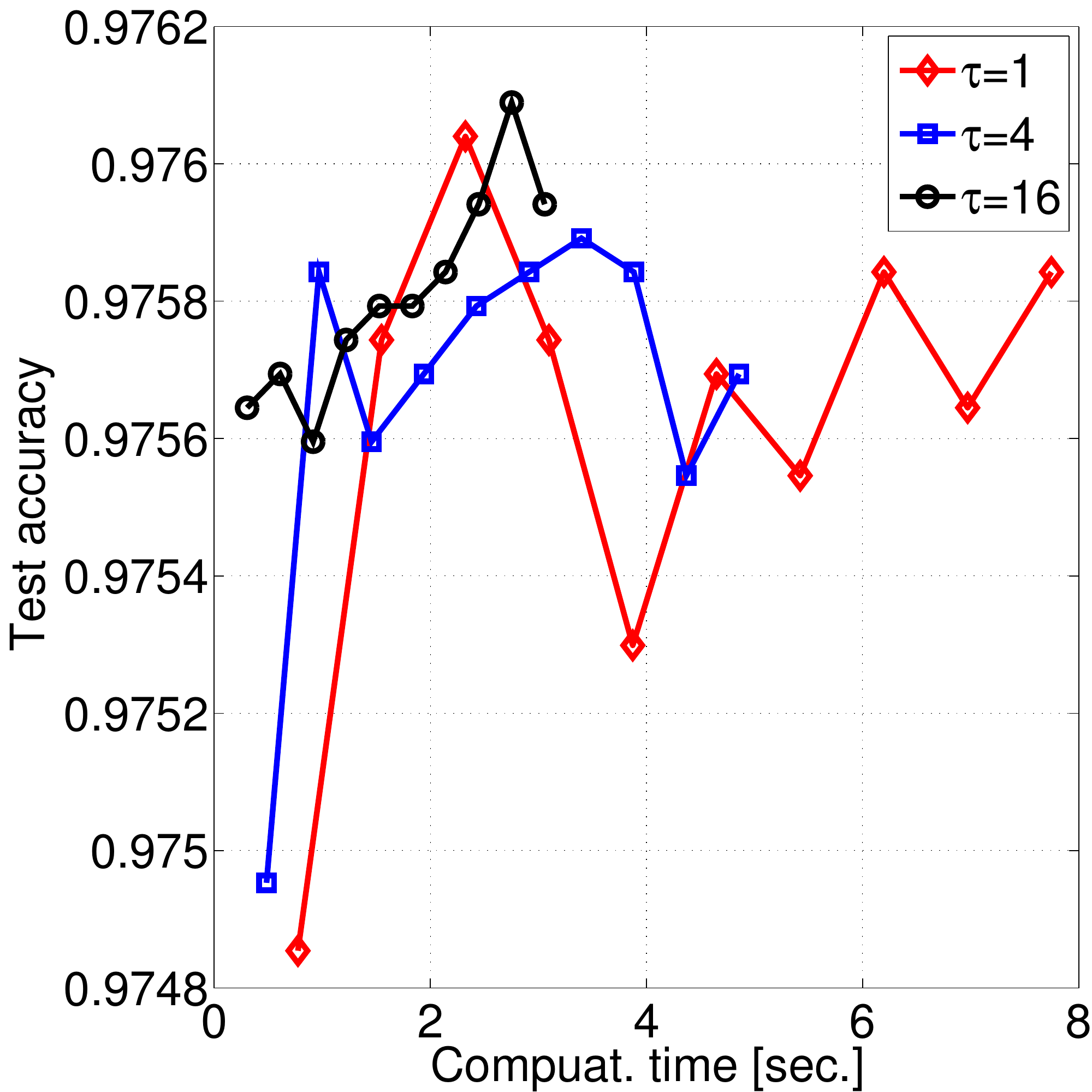}

 \end{tabular}
\caption{The performance of PCDM on the rcv1 dataset (this dataset is \emph{not} good for the method.). }
 \label{fig:rcv1}
\end{figure}
  
  \clearpage

\subsection{$L2$-regularized logistic regression with good data for PCDM}

In our last experiment we solve a problem of the form \eqref{eq:P} with $f$ being a sum of logistic losses and $\cPsi$ being an L2 regularizer,

\[\min_{x \in \R^n} \left\{\sum_{j=1}^d \log(1 + e^{-y_j A_j^T x}) + \lambda \|x\|_2^2\right\},\]
where $(y_j,A_j)\in \{+1,-1\}\times \R^n$, $j=1,2,\dots,d$, are labeled examples.

We have used the the \texttt{KDDB} dataset from the same source as the \texttt{rcv1.binary} dataset considered in the previous experiment. The data contains $n= 29,890,095$ features and is divided into two parts: a training set  with $d=19,264,097$ examples (and $566,345,888$ nonzeros; cca 8.5 GB) and a testing with $d= 748,401$ examples (and $21,965,075$ nonzeros; cca 0.32 GB). 

This training dataset is good for PCDM as each example depends on at most 75 features. That is, $\omega=75$, which is much smaller than $n$. As before, we will use PCDM1 with $\tau$-nice sampling (approximated by $\tau$-independent sampling) for $\tau=1,2,4,8$ and set $\lambda=1$.

Figure~\ref{fig:KDDD} depicts the evolution of the regularized loss $F(x_k)$ throughout the run of the 4 versions of PCDM (starting with $x_0$ for which $F(x_0) = 13,352,855$). Each marker corresponds to approximately $n/3$ coordinate updates ($n$ coordinate updates will be referred to as an ``epoch''). Observe that as more processors are used, it takes less time to achieve any given level of loss; nearly in exact proportion to the increase in  the number of processors.

 \begin{figure}[!h]
 \begin{center}
 \includegraphics[width=3in]{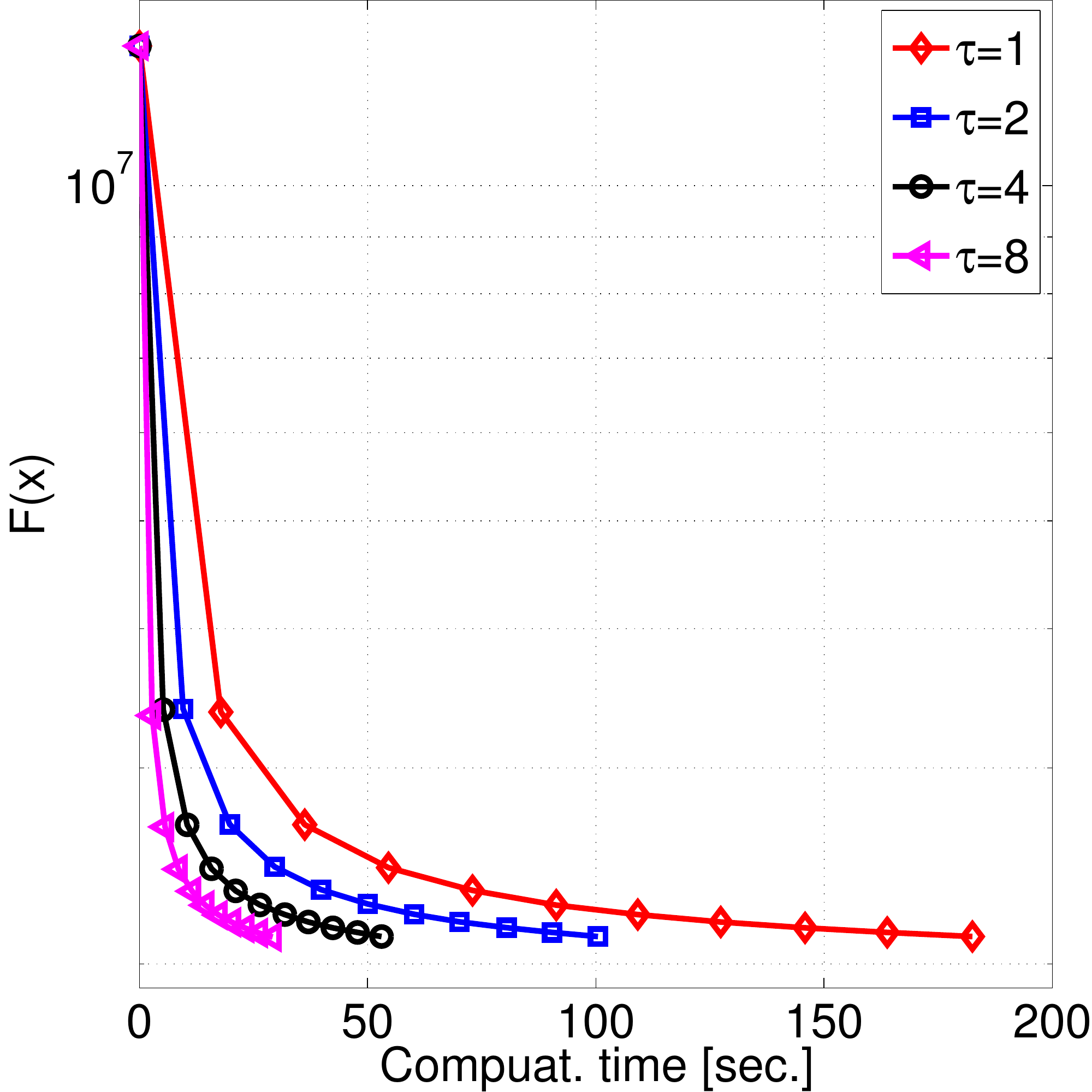}
 \end{center}
  \caption{PCDM accelerates well with more processors on a dataset with small $\omega$.}
 \label{fig:KDDD}
\end{figure}

Table~\ref{tbl:KDDD} offers an alternative view of the same experiment. In the first 4 columns ($F(x_0)/F(x_k)$)  we can see that no matter how many processors are used, the methods produce similar loss values after working through the same number of coordinates. However, since the method utilizing $\tau=8$ processors updates 8 coordinates in parallel, it does the job approximately 8 times faster. Indeed, we can see this speedup in the table.

Let us remark that the training and testing accuracy stopped increasing after having trained the classifier for 1 epoch; they were  $86.07\%$ and $88.77\%$, respectively. This  is in agreement with the common wisdom in machine learning that training beyond a single pass through the data rarely improves testing accuracy (as it may lead to overfitting). This is also the reason behind the success of light-touch methods, such as coordinate descent and stochastic gradient descent, in machine learning applications.

 \begin{table}[!h]
 \centering
  \begin{tabular}{c||c|c|c|c||r|r|r|r}
    & \multicolumn{4}{|c||}{$F(x_0)/F(x_k)$} & \multicolumn{4}{|c}{time}
   \\ \hline
   Epoch& $\tau=1$ & $\tau=2$ & $\tau=4$ & $\tau=8$ & $\tau=1$ & $\tau=2$ & $\tau=4$ & $\tau=8$  
   \\ \hline \hline
   1 &  3.96490 &
3.93909 &
3.94578 &
3.99407 &
17.83 &
9.57 &
5.20 &
2.78
   \\ \hline 
   2 &
   5.73498 &
5.72452 &
5.74053 &
5.74427 &
73.00 &
39.77 &
21.11 &
11.54
   \\ \hline
   3 & 
6.12115 &
6.11850 &
6.12106 &
6.12488 &
127.35 &
70.13 &
37.03 &
20.29
  \end{tabular}
\caption{PCDM accelerates linearly in $\tau$ on a good dataset.}
\label{tbl:KDDD}
 \end{table}


\bibliographystyle{plain} 
\bibliography{pcdm}

\clearpage
\appendix
\section{Notation glossary}

\begin{table}[!h]
\begin{center}
\begin{tabular}{|c|l|c|}
 \hline
 \multicolumn{3}{|c|}{{\bf Optimization problem} (Section~\ref{sec:intro})}\\
 \hline
$N$ & dimension of the optimization variable & \eqref{eq:P}\\
 
 $x, h$ &  vectors in $\R^N$ & \\
$f$ & smooth convex function ($f: \R^N \to \R$) & \eqref{eq:P}\\
$\Omega$ & convex block separable function ($\Omega: \R^N \to \R\cup \{+\infty\}$) & \eqref{eq:P}\\
$F$ & $F=f+\Omega$ (loss / objective function)  & \eqref{eq:P}\\
$\omega$ & degree of partial separability of
 $f$ & \eqref{eq:strucutre_of_f},\eqref{eq:omega}\\

\hline
\multicolumn{3}{|c|}{{\bf Block structure} (Section~\ref{sec:block_structure})}\\
\hline
$n$ & number of blocks & \\
$[n]$ & $[n]=\{1,2,\dots,n\}$ (the set of blocks) & Sec~\ref{sec:block_structure}\\
$N_i$ & dimension of block $i$ ($N_1+\dots+N_n = N$) & Sec~\ref{sec:block_structure}\\
$U_i$ & an $N_i \times N$ column submatrix of the $N \times N$ identity matrix& Prop~\ref{prop:decomposition}\\
$x^{(i)} $ & $x^{(i)}=U_i^T x \in\R^{N_i}$ (block $i$ of vector $x$)&Prop~\ref{prop:decomposition}\\
$\nabla_i f(x)$ & $\nabla_i f(x) = U_i^T \nabla f(x)$ (block gradient of $f$ associated with block $i$)& \eqref{eq:f_iLipschitzder}\\
$L_i$ & block Lipschitz constant of the gradient of $f$ & \eqref{eq:f_iLipschitzder}\\
$L$ & $L = (L_1,\dots,L_n)^T \in \R^n$ (vector of block Lipschitz constants)&\\
$w$ & $w = (w_1,\dots,w_n)^T \in \R^n$ (vector of positive weights) &\\
$\support(h)$ & $\support(h) = \{i \in [n] \;:\; x^{(i)} \neq 0\}$ (set of nonzero blocks of $x$) & \\
$B_i$ & an $N_i\times N_i$ positive definite matrix &\\
$\|\cdot\|_{(i)}$ & $\|x^{(i)}\|_{(i)} = \ve{B_i x^{(i)}}{x^{(i)}}^{1/2}$ (norm associated with block of $i$) & \\
$\|x\|_w$ &  $\|x\|_w=(\sum_{i=1}^n w_i \|x^{(i)}\|^2_{(i)})^{1/2}$ (weighted norm associated with $x$)& \eqref{eq:norms}\\
$\Omega_i$ & $i$-th componet of $\Omega = \Omega_1 + \dots + \Omega_n$ & \eqref
{eq:Psi_block_def}\\
$\mu_{\Omega}(w)$ & strong convexity constant of $\Omega$ with respect to the norm $\|\cdot\|_w$ & \eqref{eq:strong_def}\\
$\mu_f(w)$ & strong convexity constant of $f$ with respect to the norm $\|\cdot\|_w$ & \eqref{eq:strong_def}\\

\hline
\multicolumn{3}{|c|}{{\bf Block samplings} (Section~\ref{SEC:Block_Samplings})}\\
\hline

$S, J$ & subsets of $\{1,2,\dots,n\}$ & \\

$\hat{S}, S_k$ & block samplings (random subsets of $\{1,2,\dots,n\}$) & \\

$x_{[S]}$ & vector in $\R^N$ formed from $x$ by zeroing out blocks $x^{(i)}$ for $i \notin S$ & \eqref{eq:lllop09},\eqref
{eq:lllop09jhkjh}\\

$\tau$ & \# of blocks updated in 1 iteration (when $\Prob(|\hat{S}|=\tau)=1$)& \\
$\Exp[|\hat{S}|]$ & average \# of blocks updated in 1 iteration (when $\mathbf{Var}[|\hat{S}|]>0$) &\\
$p(S)$ & $p(S) = \Prob(\hat{S}=S)$& \eqref{eq:p(S)-general}  \\
$p_i$ & $p_i= \Prob(i \in \hat{S})$ & \eqref{eq:p_i}\\
$p$ & $p = (p_1,\dots,p_n)^T \in \R^n$ & \eqref{eq:p_i}\\

\hline
\multicolumn{3}{|c|}{{\bf Algorithm} (Section~\ref{sub:Algorithms})}\\
\hline

$\beta$ &  stepsize parameter depending on $f$ and $\hat{S}$ (a central object in this paper) &\\
$H_{\beta,w}(x,h)$ & $H_{\beta,w}(x,h) = f(x) + \ve{\nabla f(x)}{h} + \tfrac{\beta}{2}\|h\|_w^2 + \cPsi(x+h)$ &\eqref{eq:H_{beta,w}}\\
$h(x)$ & $h(x) = \arg \min_{h \in \R^\N} H_{\beta,w}(x,h)$ & \eqref{eq:h(x)}\\
$h^{(i)}(x)$ & $h^{(i)}(x) = (h(x))^{(i)} = \arg \min_{t \in \R^{N_i}} \ve{\nabla_i f(x)}{t} + \tfrac{\beta w_i}{2}\|t\|_{(i)}^2 + \Omega_i(x^{(i)}+t)$ & \eqref{eq:h(x)}\\
$x_{k+1}$ & $x_{k+1} =  x_k + \sum_{i\in S_k} U_i h^{(i)}(x_k)$ \quad ($x_k$ is the $k$th iterate of PCDM) & \\

\hline
\end{tabular}
\end{center}
\caption{The main notation used in the paper.}
\label{tbl:notation}
\end{table}

\end{document}